\newtheorem{remark}[theorem]{Remark}
\def\l({\left(}
\def\r){\right)}
\def\mc{\mathcal}
\def\T{\mathcal{T}}
\def\A{\mathcal{A}}
\def\N{\mathbb{N}}
\def\R{\mathbb{R}}
\def\H{\hat{H}}
\def\E{\mathcal{E}}
\def\eps{\varepsilon}
\def\argmin{\operatorname{argmin}}
\title{On a $\Gamma$-convergence analysis of a quasicontinuum method\thanks{This work is supported by a grant of the Deutsche Forschungsgemeinschaft (DFG) SCHL 1706/2-1.}} 
\author{Mathias Sch{\"a}ffner and Anja Schl\"omerkemper\thanks{University of W\"urzburg, Department of Mathematics, Emil-Fischer-Stra\ss{}e 40, 97074 W\"urzburg, Germany.  (\email{mathias.schaeffner@mathematik.uni-wuerzburg.de}, \email{anja.schloemerkemper@mathematik.uni-wuerzburg.de})}}
\begin{document}
\maketitle
\slugger{mms}{xxxx}{xx}{x}{x--x}

\begin{abstract}
In this article, we investigate a quasicontinuum method by means of analytical tools. More  precisely, we compare a discrete-to-continuum analysis of an atomistic one-dimensional model problem with a corresponding quasicontinuum model. We consider next and next-to-nearest neighbour interactions of Lennard-Jones type and focus on the so-called quasinonlocal quasicontinuum approximation. Our analysis, which applies $\Gamma$-convergence techniques, shows that, in an elastic setting, minimizers and the minimal energies of the fully atomistic problem and its related quasicontinuum approximation have the same limiting behaviour as the number of atoms tends to infinity. In case of fracture this is in general not true. It turns out that the choice of representative atoms in the quasicontinuum approximation has an impact on the fracture energy and on the location of fracture. We give sufficient conditions for the choice of representative atoms such that, also in case of fracture, the minimal energies of the fully atomistic energy and its quasicontinuum approximation coincide in the limit and such that the crack is located in the atomistic region of the quasicontinuum model as desired. 
\end{abstract}

\begin{keywords}Quasicontinuum method; atomistic-to-continuum; $\Gamma$--convergence; fracture.\end{keywords}

\begin{AMS}49J45, 74R10, 74G15, 74G10, 74G65, 70C20\end{AMS}

\pagestyle{myheadings}
\thispagestyle{plain}
\markboth{Mathias Sch{\"a}ffner and Anja Schl\"omerkemper}{On a $\Gamma$-convergence analysis of a quasicontinuum method}

\section{Introduction}

The quasicontinuum (QC) method was introduced by Tadmor, Ortiz and Phillips \cite{TOP} as a computational tool for atomistic simulations of crystalline solids at zero temperature. The key idea is to split the computational domain into regions where a very detailed (atomistic, nonlocal) description is needed and regions where a coarser (continuum, local) description is sufficient. The QC-method and improvements of it are successfully used to study crystal defects such as dislocations, nanoindentations or cracks and their impact on the overall behaviour of the material, see e.g.\ \cite{TM02}. 

There are various types of QC-methods: Some are formulated in an energy based framework, some in a force based framework; further, different couplings between the atomistic and continuum parts and different models in the continuum region are considered. In the previous decade, many articles related to the numerical analysis of such coupling methods were published. We refer to \cite{E, LO} for recent overviews, in particular on the large literature including work on error analysis. 

In this article, we consider a one-dimensional problem and focus on the so-called quasinonlocal quasicontinuum (QNL) method, first proposed in \cite{SMSJ}. The QNL-method and further generalizations of it (see e.g.~\cite{ELY,shap}) are energy-based QC-methods and are constructed to overcome asymmetries (so called ghost-forces) at the atomistic/continuum interface which arise in the classical energy based QC-method.\\
We are interested in an analytical approach in order to verify the QNL-method as an appropriate mechanical model by means of a discrete-to-continuum limit. This is embedded into the general aim of deriving continuum theories from atomistic models, see e.g.\ \cite[Section 4.1]{Ball}, where also the need of a rigorous justification of QC-methods is addressed.\\
Our approach, announced in \cite{SchaeSchl}, is based on $\Gamma$-convergence, which is a notion for the convergence of variational problems, see e.g.\ \cite{Bbeg}. We start with a one-dimensional fully atomistic model problem which takes nearest and next-to-nearest neighbour interactions into account. The limiting behaviour of the corresponding discrete model was analyzed by means of $\Gamma$-convergence techniques in \cite{NNN} for a large number of atoms. In particular the $\Gamma$-limit and the first order $\Gamma$-limit are derived there, which take into account boundary layer effects.  \\ 
From the fully atomistic model problem, we construct an approximation based on the QNL-method. In particular, we keep the nearest and next-to-nearest neighbour interactions in the atomistic (nonlocal) region and approximate the next-to-nearest neighbour interactions in the continuum (local) region by certain nearest neighbour interactions as outlined below. Furthermore, we reduce the degree of freedom of the energy by fixing certain representative atoms and let the deformation of all atoms depend only on the deformation of these representative atoms. \\
It turns out that the choice of the representative atoms has a considerable impact on the validity of the QC-method, see Theorem~\ref{5:cor}, which is the main result of this work. This theorem asserts that the QC-method is valid if the representative atoms are chosen in such a way that there is at least one non-representative atom between two neighbouring representative atoms in the local region and in particular at the interface between the local and nonlocal regions. In Proposition~\ref{lemma:fracture}, we prove that the mentioned sufficient condition on the choice of the representative atoms is indeed sharp by showing that in cases where the condition is not satisfied the limiting energy functional of the QC-method does not have the same minima as the limiting energy of the fully atomistic model and thus should not be considered an appropriate approximation.   This implies by means of analytical tools that in numerical simulations of fracture one has to make sure to pick a sufficiently large mesh in the continuum region and at the interface.

The outline of this article is as follows. In Section~2 we present the two discrete models, namely the fully atomistic and the quasicontinuum model, in detail.
In Sections~3 and 4 we investigate the limiting behaviour of the quasicontinuum energy functional by deriving the $\Gamma$-limits of zeroth and first order. It turns out that the $\Gamma$-limit of zeroth order of the fully atomistic and the quasicontinuum model coincide (Theorem~\ref{theorem:zero}). If the boundary conditions are such that the specimen behaves elastically, we prove that both models also have the same $\Gamma$-limit of first order (Theorem~\ref{theorem:elastic}). \\
If the boundary conditions are such that fracture occurs, the quasicontinuum approximation leads to a $\Gamma$-limit of first order (Theorem~\ref{theorem:fracture}) that is in general different from the one obtained earlier for the fully atomistic model (\cite{NNN}, cf.\ Theorem~\ref{th:fracturennn}). To compare the fully atomistic and the quasicontinuum model also in this regime, we analyze the $\Gamma$-limits of first order further in Section~5. As mentioned above, it turns out that if we use a sufficiently coarse mesh in the continuum region, the minimal energies of the two first order $\Gamma$-limits coincide (Theorem~\ref{5:cor}). In fact we are able to show that in our particular model problem it is sufficient that the mesh size in the continuum region is at least twice the atomistic lattice distance. With this choice, fracture occurs always in the atomistic region as desired.\\
Furthermore, the $\Gamma$-convergence results imply, under suitable assumptions, a rate of convergence of the minimal energy of the quasicontinuum model to the minimal energy of the fully atomistic model (Theorem \ref{th:limmin}). Finally, we show that the condition on the mesh size is sharp. In Proposition~\ref{lemma:fracture}, we provide examples where the corresponding $\Gamma$-limit has a different minimal energy and different minimizers than the fully atomistic system, which is due to poorly chosen meshes. This yields an analytical understanding of why meshes have to be chosen coarse enough in the continuum region.

Similar models as the one we consider here, were investigated previously in terms of numerical analysis. We refer especially to \cite{DL,LiLu,MY09,O11,OW11} where the QNL method is studied in one dimension. By proving notions of consistency and stability, those authors perform an error analysis in terms of the lattice spacing. To our knowledge, most of the results do not hold for ``fractured'' deformations.
However, in \cite{OS} a Galerkin approximation of a discrete system is considered and error bounds are proven also for states with a single crack of which the position is prescribed. Recently, a different approach based on bifurcation theory is used in \cite{LM14} to study the QC-approximation in the context of crack growth. 

In \cite{BLBL}, a different one-dimensional atomistic-continuum coupling method is investigated. Similar as in the QC-method the domain is splitted in a discrete and a continuum region. In the discrete part the energy is given by nearest neighbour Lennard-Jones interaction and in the continuum part by an integral functional with Lennard-Jones energy density. It is shown that fracture is more favourable in the continuum than in the discrete region. To overcome this, the energy density of the continuum model is modified by introducing an additional term which depends on the lattice distance in the discrete region. Furthermore, in \cite[p.~420]{BLBLi} it is remarked that if the continuum model is replaced by a typical discretized version, the fracture is favourable in the discrete region. As mentioned above, we here treat a similar issue in the QNL-method, see in particular Theorem~\ref{5:cor}, Proposition~\ref{lemma:fracture}.

The techniques of our analysis of the QNL method are related to earlier approaches based on $\Gamma$-convergence for the passage from discrete to continuum models in one dimension, see \cite{BCi,BDMG,BG,BGhom,BLO,NNN,SSZ12}; see also \cite{FS14,FS14b} for a treatment of two dimensional models. Recently, $\Gamma$-convergence was used in \cite{ECKO} to study a QC approximation. In \cite{ECKO} a different atomistic model, namely a harmonic and defect-free crystal, is considered. Under general conditions it is shown that a quasicontinuum approximation based on summation rules has the same continuum limit as the fully atomistic system. 

Common in all those works based on $\Gamma$-convergence is that primarily information about the global minimum and minimizers are obtained.  Since atomistic solutions are not necessary global minimizers, it would be of interest to obtain also results for local minimizers, for instance in the lines of \cite{Bloc,BDMG}.  In this article, we treat systems with nearest and next-to-nearest neighbour interaction. A natural question is how the sufficient conditions on the choice of representative atoms change if we consider also $k$ interacting neighbours, $k>2$. Therefore the corresponding fully atomistic model has first to be studied, which is part of ongoing research.

\section{Setting of the Problem}

First we describe our atomistic model problem which is the same as in \cite{NNN}. We consider a one-dimensional lattice given by $\lambda_n\mathbb{Z}\cap[0,1]$ with $\lambda_n=\frac{1}{n}$ and interpret this as a chain of $n+1$ atoms. We denote by $u:\lambda_n\mathbb{Z}\cap[0,1]\to\mathbb{R}$ the deformation of the atoms from the reference configuration and write $u(i\lambda_n)=u^i$ as shorthand. We identify such functions with their piecewise affine interpolations and define
\[\A_n(0,1):=\left\{u\in C([0,1]):u\text{ is affine on }(i,i+1)\lambda_n,~i\in\{0,...,n-1\}\right\}.\]
The energy of a deformation $u\in\A_n(0,1)$ is given by
\begin{equation*}
 H_n(u)=\sum_{i=0}^{n-1}\lambda_n J_1\left(\frac{u^{i+1}-u^i}{\lambda_n}\right)+\sum_{i=0}^{n-2}\lambda_n J_2\left(\frac{u^{i+2}-u^i}{2\lambda_n}\right),\label{eq:fullatomenergy}
\end{equation*}
where $J_1$ and $J_2$ are potentials of Lennard-Jones type which will be specified in [LJ1]--[LJ4] below. Moreover, we impose boundary conditions on the first and last two atoms. For given $\ell, u_0^{(1)},u_1^{(1)}>0$ we set
\begin{equation}
 u^0=0,\quad u^1=\lambda_nu_0^{(1)},\quad u^{n-1}=\ell-\lambda_nu_1^{(1)},\quad u^n=\ell.\label{def:boundarycond}
\end{equation}
To consider only deformations which satisfy (\ref{def:boundarycond}), we define the functional $H_n^\ell:\A_n(0,1)\to(-\infty,+\infty]$
\begin{equation}
 H_n^\ell(u)=\begin{cases}
           H_n(u) &\text{if $u\in\A_n(0,1)$ satisfies (\ref{def:boundarycond}), }\\
	   +\infty &\text{else.}
          \end{cases}\label{def:hnl}
\end{equation}
The goal is to solve the minimization problem
\[\min_{u\in\A_n(0,1)}H_n^\ell(u),\]
which we consider as our atomistic problem.\\

The idea of energy based quasicontinuum approximations is to replace the above minimization problem by a simpler one of which minimizers and minimal energies are good approximations of the ones for $H_n^\ell$. Typically this new problem is obtained in two steps:
\begin{itemize}
 \item[(a)] Define an energy where the long range (in  our case next-to-nearest neighbour) interactions are replaced by certain nearest neighbour interactions in some regions.
 \item[(b)] Reduce the degree of freedom by choosing a smaller set of admissible functions. 
\end{itemize}
To obtain (a), the next-to-nearest neighbour interactions are approximated as
\begin{equation*}
 J_2\left(\frac{u^{i+2}-u^i}{2\lambda_n}\right)\approx \frac{1}{2}\left(J_2\left(\frac{u^{i+1}-u^i}{\lambda_n}\right)+J_2\left(\frac{u^{i+2}-u^{i+1}}{\lambda_n}\right)\right),
\end{equation*}
see e.g.\ \cite{O11}. While this approximation turns out to be appropriate in the bulk, this is not the case close to surfaces, where the second neighbour interactions create boundary layers. This motivates to construct a quasinonlocal quasicontinuum model accordingly: For given $n\in\N$ let $k_n^1,k_n^2\in\N$ with $0<k_n^1<k_n^2<n-2$. For $k_n=(k_n^1,k_n^2)$ we define the energy $\H_n^{k_n}$ by using the above approximation for $k_n^1\leq i\leq k_n^2-2$, cf. Fig.~\ref{fig:qcchain} and keeping the atomistic descriptions elsewhere 
\begin{equation*}
\begin{split}
 \H_n^{k_n}(u):=&\sum_{i=0}^{n-1}\lambda_nJ_1\left(\frac{u^{i+1}-u^i}{\lambda_n}\right)+\sum_{i=0}^{k^1_n-1}\lambda_n J_2\left(\frac{u^{i+2}-u^i}{2\lambda_n}\right)\\
 &+\sum_{i=k_n^1}^{k_n^2-2}\frac{\lambda_n}{2}\bigg\{J_2\left(\frac{u^{i+1}-u^{i}}{\lambda_n}\right)+J_{2}\left(\frac{u^{i+2}-u^{i+1}}{\lambda_n}\right)\bigg\}\\
&+\sum_{i=k^2_n-1}^{n-2}\lambda_nJ_{2}\left(\frac{u^{i+2}-u^{i}}{2\lambda_n}\right).\end{split}
\end{equation*}
Analogously to $H_n^\ell$ we define the functional $\H_n^{\ell,k_n}:\A_n(0,1)\to(-\infty,+\infty]$

\[ \H_n^{\ell,k_n}(u):=\begin{cases}
		  \H_n^{k_n}(u) &\mbox{if $u\in\A_n(0,1)$ satisfies (\ref{def:boundarycond}), }\\
		   +\infty&\text{else.}
                  \end{cases}
\]
A crucial step for the following analysis is to rewrite the energy $\H_n^{\ell,k_n}$ in a proper way. By defining
\begin{equation}\label{def:eniu}
 \mathcal E_n^i(u):=J_2\l(\frac{u^{i+2}-u^i}{2\lambda_n}\r)+\frac12\l(J_1\l(\frac{u^{i+2}-u^{i+1}}{\lambda_n}\r)+J_1\l(\frac{u^{i+1}-u^{i}}{\lambda_n}\r)\r)
\end{equation}
and  $J_{CB}(z):=J_1(z)+J_2(z)$, sometimes called Cauchy-Born energy density (see \cite{O11}), we can write
\begin{equation}\label{hnlqc}
\begin{split}
 \H_n^{\ell,k_n}(u)=&\frac{\lambda_n}{2}J_1\l(u_0^{(1)}\r)+\sum_{i=0}^{k_n^1-1}\lambda_n\mc{E}_n^i(u)+\frac{\lambda_n}{2}J_{CB}\l(\frac{u^{k_n^1+1}-u^{k_n^1}}{\lambda_n}\r)\\
& +\sum_{i=k_n^1+1}^{k_n^2-2}\lambda_nJ_{CB}\l(\frac{u^{i+1}-u^i}{\lambda_n}\r)+\frac{\lambda_n}{2}J_{CB}\l(\frac{u^{k_n^2}-u^{k_n^2-1}}{\lambda_n}\r)\\
&+\sum_{i=k_n^2-1}^{n-2}\lambda_n\mc{E}_n^i(u)+\frac{\lambda_n}{2}J_1\l(u_1^{(1)}\r),
\end{split}
\end{equation}
for $u\in\A_n(0,1)$ satisfying (\ref{def:boundarycond}). To emphasize the local structure of the continuum approximation, we rewrite the summation over the terms with $J_{CB}$ in (\ref{hnlqc}) as an integral. To this end we use the fact that $u'$ is constant on $\lambda_n(i,i+1)$ for $i=0,...,n-1$ and thus
\[\frac{\lambda_n}{2}J_{CB}\l(\frac{u^{i+1}-u^{i}}{\lambda_n}\r)=\frac12\int_{\lambda_ni}^{\lambda_n(i+1)}J_{CB}(u'(x))dx=\int_{\lambda_n(i+\frac12)}^{\lambda_n(i+1)}J_{CB}(u'(x))dx.\]
Then
\begin{equation}\label{hnlqc:integral}
\begin{split}
 \H_n^{\ell,k_n}(u)=&\frac{\lambda_n}{2}J_1\l(u_0^{(1)}\r)+\sum_{i=0}^{k_n^1-1}\lambda_n\mc{E}_n^i(u)+\int_{\lambda_n(k_n^1+\frac12)}^{\lambda_n(k_n^2-\frac12)}J_{CB}(u'(x))dx\\&+\sum_{i=k_n^2-1}^{n-2}\lambda_n\mc{E}_n^i(u)+\frac{\lambda_n}{2}J_1\l(u_1^{(1)}\r),
\end{split}
\end{equation}
for $u\in\A_n(0,1)$ satisfying (\ref{def:boundarycond}). 
 
 \begin{figure}[t]\label{fig:qcchain}
\begin{center}  \epsfig{file = 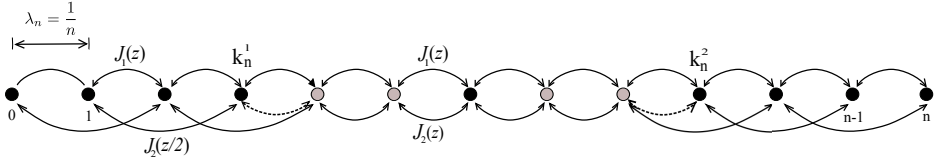, scale = 0.6}
 \end{center}
\caption{Illustration of the quasicontinuum approximation. Here $z$ denotes the scaled distance between the corresponding atoms in the deformed configuration and the two dotted lines stand for $\frac12 J_2(z)$. Moreover, the black balls symbolise the repatoms.}
 \end{figure}
To obtain (b) we consider instead of the deformation of all atoms just the deformation of a possibly much smaller set of so called representative atoms (repatoms). We denote the set of repatoms by $\T_n =\{t_n^0,...,t_n^{r_n}\}\subset\{0,...,n\}$ with $0=t_n^0<t_n^1<...<t_n^{r_n}=n$ and define
\begin{equation}
 \A_{\T_n}(0,1):=\left\{u:[0,1]\to\mathbb{R}:u\text{ is affine on }(t_n^i,t_n^{i+1})\lambda_n\mbox{ for}~t_n^i,~t_n^{i+1}\in\T_n\right\}\label{def:atn}.
\end{equation} 
Since we are interested in the energy $\H_n^{\ell,k_n}(u)$ for deformations $u\in\A_{\T_n}(0,1)$, we define
\begin{equation}
 \H_n^{\ell,k_n,\T_n}(u):=\begin{cases}
                 \H_n^{\ell,k_n}(u) &\text{if }u\in \A_{\T_n}(0,1),\\
		 +\infty &\text{else.}
                \end{cases}\label{def:hnlqct}
\end{equation}

In the following sections we study $\H_n^{\ell,k_n,\T_n}$ as $n$ tends to infinity. Therefore, we will assume that $k_n=(k_n^1,k_n^2)$ is such that
\begin{equation}
 (i)~\lim_{n\to\infty}k_n^1=\lim_{n\to\infty}n-k_n^2=+\infty,\mbox{ and } \quad(ii)~\lim_{n\to\infty}\lambda_nk_n^1=\lim_{n\to\infty}\lambda_n(n-k_n^2)=0.\label{ass:kn}
\end{equation}
Hence, in particular $\lim_{n\to\infty}\lambda_nk_n^2=1$. The above assumption corresponds to the case that the size of the atomistic region becomes unbounded on a microscopic scale (i), but shrinks to a point on a macroscopic scale (ii). While assumption (i) is crucial, see also Remark \ref{rem:elastic}, the assumption (ii) can be easily replaced by $\lim_{n\to\infty}\lambda_nk_n^1=\xi_1$, $\lim_{n\to\infty}\lambda_n(n-k_n^2)=1-\xi_2$ and $0\leq\xi_1<\xi_2\leq1$. In this case the analysis is essentially the same, but in the case of fracture, see Theorem~\ref{theorem:fracture}, one has to distinguish more cases. We assume (\ref{ass:kn}) (ii) here because it is the canonical case from a conceptual point of view. Otherwise the atomistic region and continuum region would be on the same macroscopic scale.\\    

\section{Zero-Order $\Gamma$-Limit}

In this section we derive the $\Gamma$-limit of the discrete energy (\ref{def:hnlqct}), which we refer to as zero-order $\Gamma$-limit. This limit involves the convex and lower semicontinuous envelope $J_0^{**}$ of the effective potential energy $J_0$ which is already introduced in \cite{BGhom} defined by
\begin{equation}
 J_0(z)=J_2(z)+\frac12\inf\{J_1(z_1)+J_1(z_2):z_1+z_2=2z\}.\label{def:J_0}
\end{equation}
We state the assumptions on the functions $J_1$, $J_2$ and $J_0$ under which the following results are obtained. 

\begin{enumerate}
\item[[LJ1]\hspace*{-0.19cm}] (strict convexity) $\{z:J_{0}(z)=J_{0}^{**}(z)\}\cap\{z:J_{0}\text{ is affine near }z\}=\emptyset.$
\item[[LJ2]\hspace*{-0.19cm}] (uniqueness of minimal energy configurations) For every $z$ such that $J_{0}(z)=J_{0}^{**}(z)$ we have $\#M^z=1$ where $M^z$ is defined as
\begin{equation}
 M^z=\left\{(z_1,z_2):z_1+z_2=2z, J_0(z)=J_2(z)+\frac{1}{2}(J_1(z_1)+J_1(z_2))\right\}.\label{ass:unique1}
\end{equation}
This implies 
\begin{equation}
 J_0(z)=J_1(z)+J_2(z)=J_{CB}(z)\quad\text{for every}\quad z\in\mathbb{R}:J_0(z)=J_0^{**}(z).\label{ass:unique2}
\end{equation}
\item[[LJ3]\hspace*{-0.19cm}] (regularity and behaviour at $0$, $+\infty$). $J_1,J_2:\R\to(-\infty,+\infty]$ be in $C^{1,\alpha},0<\alpha\leq 1$ on their domains such that $J_0\in C^1$ on its domain. Let $\operatorname{dom} J_1= \operatorname{dom} J_2$ and $(0,+\infty)\subset \operatorname{dom} J_1$. Moreover, we assume the following limiting behaviour
\begin{equation}
 \lim_{z\to+\infty}J_j(z)=0,\quad j=1,2\quad\text{and}\quad\lim_{z\to+\infty}J_0(z)=J_0(+\infty)\in\R.\label{lim:j012}
\end{equation}
\item[[LJ4]\hspace*{-0.19cm}] (structure of $J_1$, $J_2$ and $J_0$). $J_1$, $J_2$ are such that there exists a convex function $\Psi:\R\to[0,+\infty]$ 
\begin{equation}\label{ass:psi1}
 \lim_{z\to-\infty}\frac{\Psi(z)}{|z|}=+\infty
\end{equation}
and there exist constants $c_1,c_2>0$ such that
\begin{equation}\label{ass:psi2}
 c_1(\Psi(z)-1)\leq J_j(z)\leq c_2\max\{\Psi(z),|z|\}\quad\mbox{for all $z\in\R$}\quad j=1,2.
\end{equation}
Further, there exist $\delta_1,\delta_2,\gamma>0$ such that
\begin{equation}\label{def:gamma}
 \{\delta_j\} =\argmin\limits_{z\in\R} J_j(z)\mbox{ for $j=1,2$ and }\{\gamma\}=\argmin\limits_{z\in\R} J_0(z)
\end{equation}
and $J_j$ is strictly convex in $(-\infty,\delta_j)$ on its domain for $j=1,2$. Moreover, it holds  $J_0(\gamma)<J_0(+\infty)$ and $J_0(z)=J_0^{**}(z)$ for all $z\leq \gamma$.\\
\end{enumerate}

\begin{remark}
(a) The main examples we think of are Lennard-Jones interactions, defined classically as
\begin{equation}
 J_1(z)=\frac{k_1}{z^{12}}-\frac{k_2}{z^6},~J_2(z)=J_1(2z),\quad\mbox{ for $z>0$ and $+\infty$ for $z\leq0$}\label{def:LJ}
\end{equation}
and $k_1,k_2>0$. The calculations in \cite[Remark 4.1]{NNN} show that $J_1,J_2$ defined as above satisfy [LJ1]--[LJ4]. Another example of interatomic potentials which satisfy the above assumptions, see \cite[Remark 4.1]{NNN}, are Morse-potentials, defined for $\delta_1,k_1,k_2>0$ as
\begin{equation}\label{def:morse}
 J_1(z)=k_1\left( 1-e^{-k_2(z-\delta_1)}\right)^2-k_1,~J_2(z)=J_1(2z),\quad\mbox{for $z\in\R$.}
\end{equation}
(b) The assumptions [LJ1]--[LJ4] imply that $J_0^{**}\equiv J_{CB}^{**}$. In particular, we have
\begin{equation}\label{j0**}
J_0^{**}(z)=\begin{cases}
               J_{CB}(z)&\mbox{if $z\leq\gamma$,}\\
	       J_{CB}(\gamma)&\mbox{if $z\geq\gamma$.}
              \end{cases} 
\end{equation}
(c) Note that [LJ4] and (\ref{lim:j012}) imply that either $\operatorname{dom} J_i=\R$ or that there exists $r_i\in\R$ such that $\operatorname{dom} J_i=(r_i,+\infty)$ or $\operatorname{dom} J_i=[r_i,+\infty)$ for $i=1,2$. In [LJ3], we assume $(0,+\infty)\subset \operatorname{dom}J_1=\operatorname{dom} J_2$ for simplicity. However, this could be dropped making suitable assumptions on $\ell,u_0^{(1)},u_1^{(1)}$ in the following statements.
\end{remark}

To define appropriate function spaces, we use a similar notation as in \cite{BCi} and \cite{NNN}. Let $u\in L_{\operatorname{loc}}^1(\R)$ be a function with bounded variation. Then we say that $u\in BV^\ell(0,1)$ if $u$ satisfies the Dirichlet boundary conditions $u(0)=0$ and $u(1)=\ell$. To allow jumps in $0$ respectively $1$, the boundary conditions are replaced by $u(0-)=0$ respectively $u(1+)=\ell$ in this case. Analogously, we define $SBV^\ell(0,1)$ for special functions with bounded variations and the above boundary conditions. Let $u\in BV^\ell(0,1)$ (or in $SBV^\ell(0,1)$), then we denote by $S_u$ the jump set of $u$ in $[0,1]$, and for $t\in S_u$ we set $[u(t)]=u(t+)-u(t-)$. Moreover we denote by $D^su$ the singular part of the measure $Du$ with respect to the Lebesgue measure.\\

Let us now state and prove the zeroth-order $\Gamma$-limit of the functional $\H_n^{\ell,k_n,\T_n}$. It turns out that the limiting functional $H^{\ell}$ is equal to the $\Gamma$-limit of the functional $H_n^\ell$, cf. \cite{NNN}.

\begin{theorem}\label{theorem:zero}
Suppose [LJ1]--[LJ4] are satisfied and let $\ell,u_0^{(1)},u_1^{(1)}>0$. 
Let $k_n=(k_n^1,k_n^2)$ satisfy (\ref{ass:kn}) and let $\T_n=\{t_n^0,...,t_n^{r_n}\}$ with $0=t_n^0<t_n^1<...<t_n^{r_n}=n$ be such that
\begin{equation}
 \exists (p_n)\subset\N \mbox{ such that }\lim_{n\to\infty}\lambda_np_n=0\mbox{ and }\sup\{t_n^{i+1}-t_n^i:t_n^{i+1},t_n^i\in\T_n\}\leq p_n. \label{T_n}
\end{equation}
 Then the $\Gamma$-limit of $H_n^\ell$ defined in (\ref{def:hnl}) and of $\H_n^{\ell,k_n,\T_n}$ defined in (\ref{def:hnlqct}) with respect to the $L^1(0,1)$--topology is the functional $H^\ell$ defined by
\[
 H^\ell(u)=
\begin{cases}\displaystyle\int_0^1  J_0^{**}(u'(x))dx & \text{if } u\in BV^\ell(0,1),~D^su\geq0,  \\
         +\infty & else,
        \end{cases}
\]
on $L^1(0,1)$.
\end{theorem}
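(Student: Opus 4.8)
The plan is to establish the $\Gamma$-convergence of $\H_n^{\ell,k_n,\T_n}$ to $H^\ell$ directly, and to observe that the identical argument (indeed a simpler one, since no repatom or QNL reduction is involved) yields the corresponding statement for $H_n^\ell$; alternatively one can cite \cite{NNN} for the latter. The two standard ingredients are the \emph{liminf inequality} (compactness plus lower bound) and the \emph{limsup inequality} (construction of a recovery sequence). Throughout I would exploit the representation \eqref{hnlqc:integral}, which splits $\H_n^{\ell,k_n,\T_n}(u)$ into a boundary-layer-type sum of $\mc{E}_n^i(u)$ near $0$ and near $1$, an interior integral $\int J_{CB}(u'(x))\,dx$ over the continuum region, plus fixed boundary contributions $\tfrac{\lambda_n}{2}J_1(u_0^{(1)})$ and $\tfrac{\lambda_n}{2}J_1(u_1^{(1)})$ which vanish in the limit.

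For the \textbf{compactness and liminf inequality}, I would first use the coercivity from [LJ4], specifically \eqref{ass:psi2} together with the superlinearity \eqref{ass:psi1} of $\Psi$ at $-\infty$: if $\sup_n \H_n^{\ell,k_n,\T_n}(u_n) < \infty$, then the positive parts of the difference quotients are controlled and the negative parts are controlled in an $L^1$-averaged, de la Vall\'ee-Poussin sense, so that $(u_n)$ is bounded in $BV$ up to a subsequence and converges in $L^1$ to some $u \in BV^\ell(0,1)$. For the lower bound I would note that $J_{CB}(z) \geq J_0(z) \geq J_0^{**}(z)$ and, using [LJ1]–[LJ2] and the structure of $\mc E_n^i$ (namely that $\mc E_n^i(u) \geq J_0\big((u^{i+2}-u^i)/(2\lambda_n)\big) \geq J_0^{**}$ of that quantity, by definition \eqref{def:J_0} of $J_0$ as the infimal convolution), every term in \eqref{hnlqc:integral} is bounded below by an integral of $J_0^{**}(u_n')$ over the corresponding cell, up to the vanishing boundary terms; since $J_0^{**}$ is convex and nonnegative on the relevant range, lower semicontinuity of convex integral functionals with respect to weak-$*$ $BV$ convergence gives $\liminf_n \H_n^{\ell,k_n,\T_n}(u_n) \geq \int_0^1 J_0^{**}(u'(x))\,dx$. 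The sign constraint $D^s u \geq 0$ comes from the fact that a negative jump would force some difference quotient to $-\infty$, which by \eqref{ass:psi1} costs infinite energy; more carefully, since $J_0^{**}(+\infty) = J_{CB}(\gamma) < \infty$ but the energy of a large negative slope blows up, only nonnegative singular parts are admissible in the limit.

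For the \textbf{limsup inequality}, given $u \in BV^\ell(0,1)$ with $D^s u \geq 0$, I would by density reduce to $u$ piecewise affine with finitely many upward jumps (the usual relaxation argument, using continuity of $J_0^{**}$ and its linear growth at $+\infty$ from \eqref{j0**}), and then discretize: define $u_n \in \A_{\T_n}(0,1)$ by sampling $u$ at the repatoms, placing all the jump of $u$ inside a single continuum cell (or a single coarse repatom interval) whose width is $O(p_n \lambda_n) \to 0$ by \eqref{T_n}. On cells where $u' \leq \gamma$ one has $J_0^{**}(u') = J_{CB}(u')$ and the interior integral term is reproduced exactly; on cells carrying part of $D^s u$ the large positive slope makes $J_1, J_2 \to 0$ by \eqref{lim:j012}, contributing $J_{CB}(\gamma)\cdot(\text{length}) + o(1) = \int J_0^{**}(u')$ of that piece $+ o(1)$, using $J_0^{**}(z) = J_{CB}(\gamma)$ for $z \geq \gamma$. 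The boundary-layer sums $\sum \lambda_n \mc E_n^i(u_n)$ near $0$ and $1$ involve only $k_n^1$ resp.\ $n - k_n^2$ terms, and since $\lambda_n k_n^1 \to 0$ and $\lambda_n(n-k_n^2)\to 0$ by \eqref{ass:kn}(ii), together with the fact that $\mc E_n^i$ evaluated on the (nearly affine, bounded-slope) interpolant of a fixed smooth $u$ stays bounded, these sums are $O(\lambda_n k_n^1) + O(\lambda_n(n-k_n^2)) = o(1)$; similarly the explicit $\tfrac{\lambda_n}{2}J_1$ terms vanish. Hence $\limsup_n \H_n^{\ell,k_n,\T_n}(u_n) \leq \int_0^1 J_0^{**}(u'(x))\,dx = H^\ell(u)$.

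The step I expect to be the main obstacle is the careful treatment of the \emph{transition between the atomistic and continuum blocks} in the liminf inequality. The boundary-layer sums near $0$ and $1$ involve $\mc E_n^i(u)$, which is the right object because its infimum over the inner slopes reproduces $J_0$ — but one must make sure that bounding each such term below by $J_0^{**}$ of the \emph{pair} difference quotient $(u^{i+2}-u^i)/(2\lambda_n)$ is compatible with a clean convex-integral lower bound, i.e.\ that the overlapping structure of next-to-nearest neighbour bonds does not cause double counting or a loss of a constant that fails to vanish. The standard remedy (as in \cite{BGhom,NNN}) is to reindex the sum so that consecutive $\mc E_n^i$ use disjoint pairs, absorbing the $J_1$ half-bonds correctly, and to keep track of the at most two ``leftover'' half-bonds at each block boundary, each of which carries energy $O(\lambda_n) \to 0$. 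Making this bookkeeping precise, uniformly as the interface position $k_n^1\lambda_n, k_n^2\lambda_n$ moves with $n$, and checking it meshes with the $L^1$-lower semicontinuity of $z \mapsto \int J_0^{**}$, is where the real work lies; the remaining estimates are routine given [LJ1]–[LJ4] and assumptions \eqref{ass:kn}, \eqref{T_n}.
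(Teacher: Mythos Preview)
Your overall strategy matches the paper's, but you have misidentified where the difficulty lies, and your limsup sketch has a genuine gap.

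\textbf{Liminf.} The ``main obstacle'' you describe --- bookkeeping of overlapping $\mc E_n^i$ terms at the atomistic/continuum interface --- is not an obstacle at all. The paper sidesteps it completely: for any $\delta>0$ one has $(\delta,1-\delta)\subset \lambda_n(k_n^1+\tfrac12,k_n^2-\tfrac12)$ for $n$ large by \eqref{ass:kn}(ii), so on $(\delta,1-\delta)$ the energy is simply $\int J_{CB}(u_n')\,dx\geq\int J_0^{**}(u_n')\,dx$, and on the complement (whose measure is $2\delta$) the energy is bounded below by the crude constant $J_0(\gamma)\cdot 2\delta+\lambda_nJ_1(\delta_1)$. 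Then one applies the standard lower-semicontinuity result for convex integrands with recession function (\cite[Theorem~2.34]{AFP}), notes that $(J_0^{**})_\infty(p)=+\infty$ for $p<0$ forces $D^su\geq0$, and lets $\delta\to0$. No reindexing or half-bond accounting is needed.

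\textbf{Limsup.} Here your sketch fails. You propose to reduce by density to piecewise affine $u$ with finitely many jumps, then ``sample at repatoms''. But if $u$ has an affine piece with slope $z>\gamma$, sampling gives $u_n$ with slope $z$ on that piece, and the continuum block contributes $\int J_{CB}(z)\,dx$; since $J_{CB}(z)>J_{CB}(\gamma)=J_0^{**}(z)$ for $z>\gamma$ by \eqref{j0**}, this is \emph{strictly larger} than $H^\ell(u)$ on that piece, and you do not get a recovery sequence. The paper handles $u(x)=zx$, $z>\gamma$, by a two-scale staircase: choose $q_n$ with $\lambda_nq_n\to0$ and $p_n/q_n\to0$, select repatoms $\T_n'\subset\T_n$ spaced $\sim q_n$ apart, and build $u_n$ with slope $\gamma$ on each block of $\T_n'$ except for one repatom interval per block (of length $\leq p_n$) which absorbs the excess. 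This makes the energy $J_{CB}(\gamma)+\mc O(\lambda_np_nN_n)=J_{CB}(\gamma)+\mc O(p_n/q_n)\to J_0^{**}(z)$, and the $L^1$ error is $\mc O(\lambda_nq_n)+\mc O(p_n/q_n)\to0$. This construction is the actual core of the limsup argument.

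A second point you skip: enforcing the boundary conditions \eqref{def:boundarycond} on $u_n^1$ and $u_n^{n-1}$ is not innocuous, since replacing $u_n^1$ by $\lambda_nu_0^{(1)}$ may push $\lambda_n^{-1}(u_n^2-\lambda_nu_0^{(1)})$ out of $\operatorname{dom}J_1$. The paper fixes this by shifting the recovery sequence on either side of a chosen jump (or a region where $u'>0$) and absorbing the $\mc O(\lambda_n)$ correction there; see the construction of $\tilde u_n$ in \eqref{bcrecovery}.
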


\begin{proof}
The result for $H_n^\ell$ follows from \cite[Theorem 3.1]{NNN}. Thus we prove the result for $\H_n^{\ell,k_n,\T_n}$. The following compactness property and lower bound follow from \cite[Theorem 3.7]{BG} and \cite[Theorem 3.1]{BGhom}. For the readers convenience, we present direct proofs here.
 
\textbf{Compactness.}  Let $(u_n)$ be a sequence with equibounded energy $\H_n^{\ell,k_n,\T_n}$. The definition of $\H_n^{\ell,k_n,\T_n}$ and the properties of $J_1$, $J_2$ imply that $(u_n)\subset W^{1,\infty}(0,1)$. Define the set $I_n:=\{i\in\{0,...,n-1\}:u_n^{i+1}<u_n^i\}$. Next, we make use of the fact that $J_1,J_2$ are bounded from below and that the energy is equibounded. Moreover, we apply (\ref{ass:psi2}) and Jensen's inequality to obtain
$$C\geq \sum_{i\in I_n}\lambda_nJ_1\l(\frac{u_n^{i+1}-u_n^i}{\lambda_n}\r)\geq c_1\Psi\l(\int_{\{u_n'<0\}}u_n'dx\r)-c_1,$$
for some $C>0$ independent of $n$. By (\ref{ass:psi1}), we have that $\int_{\{u_n'<0\}}|u_n'|dx\leq C'$ for some constant $C'>0$ independent of $n$. Moreover, by using the boundary conditions, we obtain 
$$\int_{\{u_n'\geq0\}}u_n'dx=\ell-\int_{\{u_n'<0\}}u_n'dx\leq \ell+C'.$$
Since $u_n(0)=0$, we obtain by the Poincar\'e-inequality that $\|u_n\|_{W^{1,1}(0,1)}$ is equibounded. Thus, we can extract a subsequence of $(u_n)$ which converges weakly$^*$ to some $u\in BV(0,1)$, see \cite[Theorem 3.23]{AFP}. As argued in \cite[Theorem 3.1]{NNN}, we have $u\in BV^\ell(0,1)$.

\textbf{Liminf inequality.} 
Let $u\in BV^\ell(0,1)$ and $(u_n)$ be a sequence with equibounded energy $\H_n^{\ell,k_n,\T_n}$ which converges to $u$ in $L^1(0,1)$. The above compactness property and \cite[Proposition 3.13]{AFP} imply that $u_n$ converges to $u$ weakly$^*$ in $BV(0,1)$. By using [LJ3], [LJ4], we obtain for the recession function $(J_0^{**})_{\infty}$
$$(J_0^{**})_{\infty}(p):=\lim_{t\to+\infty}\frac{J_0^{**}(p_0+tp)-J_0^{**}(p_0)}{t}=
\begin{cases}
+\infty&\mbox{if $p<0$,}\\
0&\mbox{if $p\geq0$,}\end{cases}
 $$
with $p_0\in \operatorname{dom} J_0^{**}$ arbitrary. For every $\delta>0$ there exists $N\in\N$ such that $(\delta,1-\delta)\subset\lambda_n(k_n^1+\frac12,k_n^2-\frac12)$ for every $n\geq N$. For $n$ large enough, we deduce from (\ref{hnlqc:integral}) by the definition of $J_0$ and [LJ4]
$$\H_n^{\ell,k_n,\T_n}(u_n)\geq \lambda_nJ_1(\delta_1)+J_0(\gamma)|(0,1)\setminus (\delta,1-\delta)|+\int_\delta^{1-\delta}J_0^{**}(u'(x))dx$$
Note that by $(u_n)\subset W^{1,\infty}(0,1)$ it follows $D^su_n=0$ for all $n\in\N$, thus there exists $C\in\R$ such that 
\begin{align*}
 &\liminf_{n\to\infty}\H_n^{\ell,k_n,\T_n}(u_n)\\
&\geq C\delta+\liminf_{n\to\infty}\l(\int_\delta^{1-\delta}J_0^{**}(u_n'(x))dx+\int_\delta^{1-\delta}(J_0^{**})_{\infty}\l(\frac{D^su_n}{|D^su_n|}\r)d|D^su_n|\r)\\
&\geq C\delta+\int_\delta^{1-\delta}J_0^{**}(u'(x))dx+\int_\delta^{1-\delta}(J_0^{**})_{\infty}\l(\frac{D^su}{|D^su|}\r)d|D^su|.
\end{align*}
The last inequality is a direct implication of \cite[Theorem 2.34]{AFP}, using that $Du_n=u_n'\mathcal{L}^1$ weakly$^*$ converges to $Du$. By using that the right-hand side above is finite only if $D^su\geq0$, we obtain the liminf inequality from the arbitrariness of $\delta>0$.

\textbf{Limsup inequality.} To show the existence of a recovery sequence, we first do not take the boundary conditions into account. Therefore, we define the functional $\H_n^{k_n,\T_n}$ by
\begin{equation*}
 \H_n^{k_n,\T_n}(u):=\begin{cases}
                 \H_n^{k_n}(u) &\text{if }u\in \A_{\T_n}(0,1),\\
		 +\infty &\text{else.}
                \end{cases}
\end{equation*}
For every $u\in BV(0,1)$ we show existence of a sequence $(u_n)\subset L^1(0,1)$ converging to $u$ in $L^1(0,1)$ such that
\begin{equation}
 \limsup_n \H_n^{k_n,\T_n}(u_n)\leq H(u):=\int_0^1J_0^{**}(u'(x))dx.
\end{equation}
As outlined in the proof of \cite[Theorem 3.5]{BG} it is enough to show the above inequality for $u$ linear and for $u$ with a single jump: by density, this proves the statement for $u\in SBV(0,1)$ and the general estimate follows by relaxation arguments. Firstly, we consider functions $u$ with a single jump. Let $u(x)=z x + a\chi_{(x_0,1]}$  with $z\leq \gamma$, $a>0$ and $0\leq  x_0\leq 1$. By (\ref{T_n}) there exists $(h_n^1),(h_n^2)\subset \N$ with $h_n^1,h_n^2\in\T_n$ and $0< h_n^2-h_n^1\leq p_n$ such that $\lim_{n\to\infty}\lambda_nh_n^i=x_0$ for $i=1,2$. We define now a sequence $(u_n)$ by
\begin{equation}\label{rec:crack}
 u_n^i=\begin{cases}
         zi\lambda_n&\mbox{if $0\leq i\leq h_n^1$,}\\
	 zi\lambda_n+a\frac{i-h_n^1}{h_n^2-h_n^1}&\mbox{if $h_n^1\leq i\leq h_n^2$,}\\
	 zi\lambda_n+a&\mbox{if $h_n^2\leq i\leq n$.}
        \end{cases}
\end{equation}
Obviously we have $u_n\to u$ in $L^1(0,1)$. The functions $u_n$ are defined such that $u_n^{i+1}-u_n^i=\lambda_nz+\frac{a}{h_n^2-h_n^1}$ for $i\in\{h_n^1,...,h_n^2-1\}$ and $u_n^{i+1}-u_n^i=\lambda_nz$ for all $0\leq i<n$ with $i\notin\{h_n^1,...,h_n^2-1\}$. Using $h_n^2-h_n^1\leq p_n$, (\ref{T_n}), [LJ3] and [LJ4] this implies
$$ \H_n^{k_n,\T_n}(u_n)=J_1(z)+J_2(z)+\mc{O}(\lambda_np_n)\to\int_0^1J_0^{**}(z)dx\quad\mbox{as $n\to\infty$}.$$
Now let $u(x)=zx$ for some $z>\gamma$. For every sequence $(p_n)$ satisfying (\ref{T_n}) we find a sequence $(q_n)$ of natural numbers such that
\[\lim_{n\to\infty}\lambda_nq_n=0,\quad \lim_{n\to\infty}\frac{p_n}{q_n}=0.\]
We define for every $n\in\N$ a set $\T_n'\subset\T_n:=\{t_n^0,...,t_n^{r_n}\}$ with $\T_n'=\{t_n^{h_n^0},...,t_n^{h_n^{N_n}}\}$, where $0=h_n^0<h_n^1<...<h_n^{N_n}=r_n$ such that there exist $c_1,c_2>0$ which satisfy
\[c_1q_n\leq t_n^{h_n^{j+1}}-t_n^{h_n^j}\leq c_2q_n\qquad \forall j\in\{0,...,N_n-1\}.\]
From $n=\sum_{j=0}^{N_n-1}\l(t_n^{h_n^{j+1}}-t_n^{h_n^j}\r)$ we deduce $c_1N_nq_n\leq n\leq c_2N_nq_n$ and thus $N_nq_n=\mc{O}(n)$. Let us define $u_n\in\A_{\T_n}(0,1)$ such that $u_n(1)=z$ and
\[u_n(x)=z\lambda_nt_n^{h_n^j}+\gamma(x-\lambda_nt_n^{h_n^j})\quad\mbox{for $x\in[t_n^{h_n^j},t_n^{h_n^{j+1}-1}]\lambda_n$ and $j\in\{0,...,N_n-1\}$.}\]
By using $t_n^{h_n^j}-t_n^{h_n^j-1}\leq p_n$ $\forall j\in\{1,...,N_n\}$ and $|u(x)-u_n(x)|\leq2z$, we obtain
\begin{align*}
\int_0^1|u(x)-u_n(x)|dx=&\sum_{j=0}^{N_n-1}\int_{\lambda_nt_n^{h_n^j}}^{\lambda_nt_n^{h_n^{j+1}-1}}\Big|zx-z\lambda_nt_n^{h_n^j}-\gamma\l(x-\lambda_nt_n^{h_n^j}\r)\Big|dx\\
 &+\sum_{j=1}^{N_n}\int_{\lambda_nt_n^{h_n^j-1}}^{\lambda_nt_n^{h_n^{j}}}|u(x)-u_n(x)|dx\\
\leq &\sum_{j=0}^{N_n-1}\int_{\lambda_nt_n^{h_n^j}}^{\lambda_nt_n^{h_n^{j+1}-1}}(z-\gamma)(x-\lambda_nt_n^{h_n^j})dx+2zN_n\lambda_np_n\\
= &\sum_{j=0}^{N_n-1}\frac12(z-\gamma)\lambda_n^2\l(t_n^{h_n^{j+1}-1}-t_n^{h_n^j}\r)^2+2z N_n\lambda_np_n\\
\leq&\frac12(z-\gamma)N_nc_2^2q_n^2\lambda_n^2+2z \lambda_np_nN_n
\end{align*}
and thus $u_n\to u$ in $L^1(0,1)$. Indeed, by $\lambda_nN_nq_n=\mc{O}(1)$, $\lambda_nq_n\to0$ and $\mc{O}(\lambda_np_nN_n)=\mc{O}\l(\frac{p_n}{q_n}\r)$, the last term tends to zero as $n\to\infty$. For the limsup inequality we argue similarly as in the case of a jump before. By definition, we have $u_n^{i+1}-u_n^i=\lambda_n\gamma$ for $0\leq i\leq n-1$ and $i\notin\l(\N\cap\cup_{j=1}^{N_n}[t_n^{h_n^j-1}, t_n^{h_n^j})\r)$ and by using $\#\l(\N\cap\cup_{j=1}^{N_n}[t_n^{h_n^j-1}, t_n^{h_n^j})\r)\leq N_np_n$, we have  
$$ \H_n^{k_n,\T_n}(u_n)=J_1(\gamma)+J_2(\gamma)+\mc{O}(\lambda_np_nN_n). $$
Since $\lambda_np_nN_n\to0$ as $n\to\infty$ we deduce, using (\ref{j0**}), the limsup inequality in this case. Combining the arguments we have the limsup inequality for all functions which are linear except in a single jump.\\
Now let $u\in BV^\ell(0,1)$ with $H^\ell(u)<+\infty$. The above procedure and similar arguments as in \cite[Theorem 3.1]{BCi} provides a sequence $(u_n)$ which satisfies $u_n^0=0$ and $u_n^n=\ell$ but not necessarily satisfies the boundary conditions on the second and last but one atom. In general it is not clear if for example $\frac{1}{\lambda_n}\l( u_n^2- \lambda_n u_0^{(1)}\r)\in \operatorname{dom}J_1$ for all $n\in\N$. Thus, we cannot simply replace $u_n^1$ or $u_n^{n-1}$ by the given boundary conditions. We show now how to overcome this. As before, it is sufficient to show the limsup inequality for functions $u\in BV^\ell(0,1)$ which are piecewise affine with positive jumps. From $\ell>0$, we deduce that $\#S_u\geq1$ or $u'>0$ on some open interval $I\subset[0,1]$. Firstly, we assume that there exists $x\in[0,1]$ with $x\in S_u$. Without loss of generality, we can assume that $(u_n)$ satisfies $u_n^1-u_n^0=\mathcal O(\lambda_n)$ and $\ell-u_n^{n-1}=\mathcal O(\lambda_n)$ as $n\to\infty$. As in the sequence constructed in (\ref{rec:crack}), there exist $(h_n^1),(h_n^2)\subset\N$ with $h_n^1<h_n^2\in\T_n$ and $\lim_{n\to\infty}\lambda_nh_n^i=x\in[0,1]$ for $i=1,2$ and $\T_n\cap\{h_n^1+1,..,h_n^2-1\}=\emptyset$ for all $n\in\N$ such that 
$$\lim_{n\to\infty}\l( u_n^{h_n^2}-u_n^{h_n^1}\r)=[u](x)>0.$$  
Define now $(\tilde u_n)$ such that $\tilde u_n\in \A_{\T_n}(0,1)$ and 
\begin{equation}\label{bcrecovery}
 \tilde u_n^i=\begin{cases}
   0&\mbox{if $i=0$},\\
   u_n^i+\lambda_nu_0^{(1)}-u_n^1&\mbox{if $1\leq i\leq h_n^1$},\\
   \tilde u_n^{h_n^1}+\frac{i-h_n^1}{h_n^2-h_n^1}(\tilde u_n^{h_n^2}-\tilde u_n^{h_n^1})&\mbox{if $h_n^1+1\leq i\leq h_n^2-1$,}\\
   u_n^i+\ell-\lambda_nu_1^{(1)}-u_n^{n-1}&\mbox{if $h_n^2\leq i\leq n-1$},\\ 
   \ell&\mbox{if }i=n.
  \end{cases}
\end{equation}
Then $\tilde u_n$ satisfies the boundary conditions and we have $\|u_n-\tilde u_n\|_{L^1(0,1)}\to0$ as $n\to\infty$ and thus $\tilde u_n\to u$ in $L^1(0,1)$. Moreover, we have $\tilde u'_n \equiv u'_n$ on $\lambda_n\l( (1,h_n^1)\cup (h_n^2,n-1)\r)$ and 
\begin{equation}\label{zero:utilde}
 \tilde u_n^{h_n^2}- \tilde u_n^{h_n^1}=u_n^{h_n^2} - u_n^{h_n^1} + \ell - u_n^{n-1}-\lambda_n(u_0^{(1)}+u_1^{(1)})+u_n^1=u_n^{h_n^2}-u_n^{h_n^1}+\mathcal O(\lambda_n)\to[u](x)
\end{equation}
as $n\to\infty$. Thus $\tilde u_n$ is a recovery sequence for $u$. \\
Let now $u'\equiv z>0$ on some open interval $I\subset[0,1]$. There exist $(h_n^1),(h_n^2)\subset\N$ with $h_n^1<h_n^2\in\T_n$ and $\lim_{n\to+\infty}\l( h_n^2-h_n^1 \r) =+\infty$ and $\lim_{n\to\infty}\lambda_n(h_n^2-h_n^1)=0$ with $\lambda_n(h_n^1,h_n^2)\subset I$. We define now $(\tilde u_n)$ as in (\ref{bcrecovery}). As above, we have $\tilde u_n\to u$ in $L^1(0,1)$ and $\tilde u'_n \equiv u'_n$ on $\lambda_n\l( (1,h_n^1)\cup (h_n^2,n-1)\r)$. By (\ref{zero:utilde}), we have for all $t\in\lambda_n(h_n^1,h_n^2)$
$$\tilde u_n'(t) = u_n'(t)+\mathcal{O}((h_n^2-h_n^1)^{-1})>0, $$
for $n$ large enough. Using $\lim_{n\to\infty} \l( h_n^2-h_n^1 \r)=+\infty$ and [LJ3] implies that the sequence $\tilde u_n$ is a recovery sequence for $u$.   
\end{proof}

\begin{remark}\label{rem:zero} (a) Jensen's inequality implies $\min_u H^\ell(u)=J_0^{**}(\ell)$ for every $\ell$.\\
(b) The $\Gamma$-limit of zeroth order computed in Theorem~\ref{theorem:zero} does not give any information about boundary layer energies or the number and location of possible jumps. Thus we need to compare the functionals $H_n^\ell$ and $\H_n^{\ell,k_n,\T_n}$ at a higher order in $\lambda_n$, which will be done in the next section. To underline that the zeroth-order $\Gamma$-limit is too coarse to measure the quality of the quasicontinuum method, we remark that one can show that the functional defined as
\[H_n^{\ell,CB}(u):=\begin{cases}
                  \sum_{i=0}^{n-1}\lambda_nJ_{CB}\l(\frac{u^{i+1}-u^i}{\lambda_n}\r)&\mbox{if $u\in\mc{A}_n(0,1)$ satisfies (\ref{def:boundarycond}),}\\
		  +\infty&\mbox{else,}
                 \end{cases}
\]
$\Gamma$-converges to $H^\ell$ with respect to the strong topology of $L^1(0,1)$. Note that $H_n^{\ell,CB}$ can be understood as a continuum approximation of $H_n^\ell$.

\end{remark}

\section{First order $\Gamma$-Limit}

In this section, we derive the $\Gamma$-limit of the functional $\H_{1,n}^{\ell,k_n,\T_n}$ defined by
\begin{equation}
\H_{1,n}^{\ell,k_n,\T_n}(u)=\frac{\H_n^{\ell,k_n,\T_n}(u)-\min_v H^\ell(v)}{\lambda_n},\label{def:hn1qc}
\end{equation}
which is called the $\Gamma$-limit of first order. In \cite{NNN}, this is done for the functionals $H_{1,n}^\ell(u)=\frac{1}{\lambda_n}\l(H_n^\ell(u)-\min_v H^\ell(v)\r)$ and in \cite{BCi} for a similar functional; we can use several ideas from there for our setting. To shorten the notation, we omit the index $\T_n$ of $\H_{1,n}^{\ell,k_n,\T_n}$ if we consider $(\T_n)$ such that $\T_n=\{0,...,n\}$ for all $n\in\N$.\\
It will be useful to rearrange the terms in the expression of the energy $\H_{1,n}^{\ell,k_n,\T_n}$ in a similar way as in \cite{BCi} or \cite{NNN}: For given $\ell,u_0^{(1)},u_1^{(1)}>0$ let $(u_n)$ be a sequence of functions satisfying the boundary conditions (\ref{def:boundarycond}) for each $n$. We obtain from Remark~\ref{rem:zero} (a), (\ref{def:hn1qc}) and (\ref{hnlqc}) by adding and subtracting $\sum_{i=0}^{n-2}(J_0^{**})'(\ell)\l(\frac{u_n^{i+2}-u_n^i}{2\lambda_n}-\ell\r)$
\begin{align*}
 \H_{1,n}^{\ell,k_n,\T_n}(u_n)=&\frac{1}{2}J_1\left(u_0^{(1)}\right)+\sum_{i=0}^{k_n^1-1}\bigg\{\E_n^i(u_n)-J_0^{**}(\ell)-(J_0^{**})'(\ell)\left(\frac{u_n^{i+2}-u_n^i}{2\lambda_n}-\ell\right)\bigg\}\\
 &\quad -(J_0^{**})'(\ell)\left(\frac{u_n^{k_n^1+2}-u_n^{k_n^1}}{2\lambda_n}-\ell\right)+\frac{1}{2}J_{CB}\left(\frac{u_n^{k_n^1+1}-u_n^{k_n^1}}{\lambda_n}\right)\\
 &+\sum_{i=k_n^1+1}^{k_n^2-2}\left(J_{CB}\left(\frac{u_n^{i+1}-u_n^i}{\lambda_n}\right)-J_0^{**}(\ell)-(J_0^{**})'(\ell)\left(\frac{u_n^{i+2}-u_n^i}{2\lambda_n}-\ell\right)\right)\\
 &+\frac{1}{2}J_{CB}\left(\frac{u_n^{k_n^2}-u_n^{k_n^2-1}}{\lambda_n}\right)+\sum_{i=k_n^2-1}^{n-2}\bigg\{\E_n^i(u_n)-J_0^{**}(\ell)\\
 &-(J_0^{**})'(\ell)\left(\frac{u_n^{i+2}-u_n^i}{2\lambda_n}-\ell\right)\bigg\}-2J_0^{**}(\ell)+\frac12J_1(u_1^{(1)})\\
 &+\sum_{i=0}^{n-2}(J_0^{**})'(\ell)\left(\frac{u_n^{i+2}-u_n^i}{2\lambda_n}-\ell \right).
\end{align*}
Since
$$ \sum_{i=0}^{n-2}(u_n^{i+2}-u_n^i)=2\sum_{i=0}^{n-1}(u_n^{i+1}-u_n^i)-(u_n^1-u_n^0)-(u_n^n-u_n^{n-1})=2\ell-\lambda_n\l(u_0^{(1)}+u_1^{(1)}\r),$$
and \cite[(4.16)]{NNN}, the last term reads
\[\sum_{i=0}^{n-2}(J_0^{**})'(\ell)\left(\frac{u_n^{i+2}-u_n^i}{2\lambda_n}-\ell\right)=-(J_0^{**})'(\ell)\left(\frac{u_0^{(1)}+u_1^{(1)}}{2}-\ell\right).\]
In the same way we can rewrite the terms containing the sum over $k_n^1+1\leq i\leq k_n^2-2$ by
\begin{eqnarray*}
& &\sum_{i=k_n^1+1}^{k_n^2-2}\left(J_{CB}\left(\frac{u_n^{i+1}-u_n^i}{\lambda_n}\right)-J_0^{**}(\ell)-(J_0^{**})'(\ell)\left(\frac{u_n^{i+2}-u_n^i}{2\lambda_n}-\ell\right)\right)\\
& &=\sum_{i=k_n^1+1}^{k_n^2-2}\left(J_{CB}\left(\frac{u_n^{i+1}-u_n^i}{\lambda_n}\right)-J_0^{**}(\ell)-(J_0^{**})'(\ell)\left(\frac{u_n^{i+1}-u_n^i}{\lambda_n}-\ell\right)\right)\\
& &\quad+\frac{1}{2}(J_0^{**})'(\ell)\left(\left(\frac{u_n^{k_n^1+2}-u_n^{k_n^1+1}}{\lambda_n}-\ell\right)-\left(\frac{u_n^{k_n^2}-u_n^{k_n^2-1}}{\lambda_n}-\ell\right)\right).
\end{eqnarray*}
Let $(u_n)$ be such that $u_n\in\A_n(0,1)$, then we define 
\begin{equation}
\sigma_n^i(\ell):=\E_n^i(u_n)-J_0^{**}(\ell)-(J_0^{**})'(\ell)\left(\frac{u_n^{i+2}-u_n^i}{2\lambda_n}-\ell\right),\label{def:sigmal}
\end{equation}
with $\E_n^i(u_n)$ defined in (\ref{def:eniu}) and 
\begin{equation}
\mu_n^i(\ell):=J_{CB}\left(\frac{u_n^{i+1}-u_n^i}{\lambda_n}\right)-J_0^{**}(\ell)-(J_0^{**})'(\ell)\left(\frac{u_n^{i+1}-u_n^i}{\lambda_n}-\ell\right).\label{def:mul} 
\end{equation}
By using the definition of $J_0$ and $J_{CB}$, we have $J_{CB}(z)\geq J_0(z)\geq J_0(\gamma)$ which implies with (\ref{def:J_0}) and $J_0^{**}(z)=J_0(\gamma)$ for $z\geq\gamma$ that $\sigma_n^i(\ell)=\sigma_n^i(\gamma),\mu_n^i(\ell)=\mu_n^i(\gamma)\geq0$ for $\ell\geq\gamma$ and we will often drop the variable $\ell$ in this case and write $\sigma_n^i$ and $\mu_n^i$ for short. For $\ell\leq\gamma$, we have
\[J_{0}(z)-J_0^{**}(\ell)-(J_0^{**})'(\ell)(z-\ell)\geq J_{0}^{**}(z)-J_0^{**}(\ell)-(J_0^{**})'(\ell)(z-\ell)\geq 0\]
for all $z\in\R$ and from $J_{CB}(z)\geq J_0(z)$ and $J_{CB}^{**}\equiv J_0^{**}$ we deduce $\sigma_n^i(\ell),\mu_n^i(\ell)\geq0$.\\
We can now rewrite $\H_{1,n}^{\ell,k_n,\T_n}(u_n)$ such that all unknowns $u_n^i$, $i=2,...,n-2$ are arranged in non-negative terms
\begin{align}
 \H_{1,n}^{\ell,k_n,\T_n}(u_n)=&\frac{1}{2}J_1\left(u_0^{(1)}\right)+\sum_{i=0}^{k^1_n-1}\sigma_n^i(\ell)-(J_0^{**})'(\ell)\left(\frac{u_n^{k^1_n+2}-u_n^{k^1_n}}{2\lambda_n}-\ell\right)\notag\\
 &+\frac{1}{2}J_{CB}\left(\frac{u_n^{k_n^1+1}-u_n^{k_n^1}}{\lambda_n}\right)+\sum_{i=k_n^1+1}^{k_n^2-2}\mu_n^i(\ell)\notag\\
&+\frac{1}{2}(J_0^{**})'(\ell)\left(\left(\frac{u_n^{k_n^1+2}-u_n^{k_n^1+1}}{\lambda_n}-\ell\right)-\left(\frac{u_n^{k_n^2}-u_n^{k_n^2-1}}{\lambda_n}-\ell\right)\right)\notag\\
 &+\frac{1}{2}J_{CB}\left(\frac{u_n^{k_n^2}-u_n^{k_n^2-1}}{\lambda_n}\right)+\sum_{i=k_n^2-1}^{n-2}\sigma_n^i(\ell)+\frac{1}{2}J_1(u_1^{(1)})-2J_0^{**}(\ell)\notag\\
 &-(J_0^{**})'(\ell)\left(\frac{u_0^{(1)}+u_1^{(1)}}{2}-\ell\right)\notag\\
=&\frac{1}{2}J_1\left(u_0^{(1)}\right)+\sum_{i=0}^{k_n^1-1}\sigma_n^i(\ell)+\frac{1}{2}\mu_n^{k_n^1}(\ell)+\sum_{i=k_n^1+1}^{k_n^2-2}\mu_n^i(\ell)+\frac{1}{2}\mu_n^{k_n^2-1}(\ell)\notag\\
 &+\sum_{i=k_n^2-1}^{n-2}\sigma_n^i(\ell)+\frac{1}{2}J_1\left(u_1^{(1)}\right)-J_0^{**}(\ell)-(J_0^{**})'(\ell)\left(\frac{u_0^{(1)}+u_1^{(1)}}{2}-\ell\right).\label{def:hnl1<}
\end{align}

Before we state the compactness results about sequences $(u_n)$ with equibounded energies $H_{1,n}^\ell$ and $\H_{1,n}^{\ell,k_n,\T_n}$, we prove the following lemma.
\begin{lemma}\label{lemma:F}
 Let $\ell>0$ and $J_1,J_2$ satisfy [LJ1]--[LJ4]. Let $\eps>0$. Then there exists $\eta=\eta(\eps)>0$ such that
\begin{equation}\label{ineq:F}
 F(z):=\inf_{a:|a-\min\{\ell,\gamma\}|\geq\eps}\frac12\left(J_1(a)+J_1(2z-a)\right)+J_2(z) - (J_0^{**})'(\ell)\left(z-\ell\right)-J_0^{**}(\ell)\geq \eta.
\end{equation}
\end{lemma}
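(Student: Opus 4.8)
The plan is to argue by contradiction, exploiting the coercivity and convexity structure encoded in [LJ1]--[LJ4]. Suppose the infimum $F(z)$ is not bounded below by any positive constant uniformly in $z$. First I would observe that the quantity
\[
G(z,a):=\tfrac12\bigl(J_1(a)+J_1(2z-a)\bigr)+J_2(z)-(J_0^{**})'(\ell)(z-\ell)-J_0^{**}(\ell)
\]
is always nonnegative: indeed, minimizing over \emph{all} $a$ (not just those with $|a-\min\{\ell,\gamma\}|\ge\eps$) gives exactly $J_0(z)-J_0^{**}(\ell)-(J_0^{**})'(\ell)(z-\ell)$, which by convexity of $J_0^{**}$ and $J_0\ge J_0^{**}$ is $\ge0$; this is precisely the computation already used for $\sigma_n^i(\ell),\mu_n^i(\ell)\ge0$ just above Lemma~\ref{lemma:F}. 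So $F(z)\ge0$ trivially, and the content of the lemma is the strict positivity. Negating the claim, there would be sequences $z_k\in\R$ and $a_k$ with $|a_k-\min\{\ell,\gamma\}|\ge\eps$ such that $G(z_k,a_k)\to0$.

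Next I would establish compactness of $(z_k)$ and $(a_k)$. The growth assumption (\ref{ass:psi2}) together with (\ref{ass:psi1}) forces $a_k$ and $2z_k-a_k$ to stay bounded below (a very negative argument would make $\tfrac12(J_1(a_k)+J_1(2z_k-a_k))$ blow up like $\Psi$, which dominates the linear term $-(J_0^{**})'(\ell)(z-\ell)$ since $(J_0^{**})_\infty(p)=+\infty$ for $p<0$ as noted in the proof of Theorem~\ref{theorem:zero}); hence $z_k$ is bounded below. For the upper bound: using (\ref{lim:j012}), $J_1$ and $J_2$ tend to $0$ at $+\infty$, so if $z_k\to+\infty$ then, since $\ell,\gamma>0$ and in the relevant regime $(J_0^{**})'(\ell)(z-\ell)$ is bounded above on $z\ge$ const when $(J_0^{**})'(\ell)\le0$ — and when $\ell>\gamma$ we have $(J_0^{**})'(\ell)=0$ — one checks $G(z_k,a_k)$ cannot go to $0$ unless $a_k$ and $2z_k-a_k$ are controlled, which again bounds $z_k$ above. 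I expect a short case distinction here ($\ell\le\gamma$ versus $\ell>\gamma$, the latter giving $(J_0^{**})'(\ell)=0$ and $J_0^{**}(\ell)=J_{CB}(\gamma)$) is the cleanest route. Passing to a subsequence, $z_k\to z_*$, $a_k\to a_*$ (with $a_*$ possibly $\pm\infty$ only if the energy blows up, which is excluded), and continuity of $J_1,J_2$ on their domains gives $G(z_*,a_*)=0$. But $G(z_*,a_*)=0$ with $a_*$ admissible means $\tfrac12(J_1(a_*)+J_1(2z_*-a_*))+J_2(z_*)=J_0^{**}(\ell)+(J_0^{**})'(\ell)(z_*-\ell)\le J_0^{**}(z_*)\le J_0(z_*)$, forcing equality throughout: $J_0(z_*)=J_0^{**}(z_*)$ and $(a_*,2z_*-a_*)\in M^{z_*}$ in the notation (\ref{ass:unique1}). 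By [LJ2], $\#M^{z_*}=1$, so $a_*=2z_*-a_*=z_*$, and then (\ref{ass:unique2}) gives $z_*=\delta_1$; but $J_0(z_*)=J_0^{**}(z_*)$ also forces (via the structure of $J_0^{**}$ in (\ref{j0**}) and [LJ4]) that $z_*=\gamma$ when $\ell\ge\gamma$, or $z_*=\ell$ when $\ell\le\gamma$ — in either case $a_*=\min\{\ell,\gamma\}$, contradicting $|a_k-\min\{\ell,\gamma\}|\ge\eps$ in the limit.

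The main obstacle I anticipate is the compactness step at $+\infty$: one must rule out $z_k\to+\infty$ with the minimizing $a_k$ simultaneously drifting so that the negative contributions of $J_1$ cancel the (possibly negative) linear term. This requires carefully combining the lower bound $c_1(\Psi-1)\le J_1$ with the convexity of $\Psi$ and the precise sign of $(J_0^{**})'(\ell)$; the subtlety is that when $\ell<\gamma$, $(J_0^{**})'(\ell)=J_{CB}'(\ell)$ can be negative, so $-(J_0^{**})'(\ell)(z-\ell)\to+\infty$ as $z\to+\infty$, which actually \emph{helps}, whereas when $\ell\ge\gamma$ the linear term vanishes and one falls back on $\lim_{z\to+\infty}J_0(z)=J_0(+\infty)>J_0(\gamma)=J_0^{**}(\ell)$, so that $G(z_k,a_k)\to J_0(+\infty)-J_0^{**}(\ell)>0$ along that branch — contradiction. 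So in fact no genuine escape to $+\infty$ is possible; I would state this as the key sublemma and dispatch the finite case by the continuity-plus-[LJ2] argument above.
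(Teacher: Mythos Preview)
Your contradiction/compactness argument is essentially correct and constitutes a genuinely different route from the paper's proof. The paper argues \emph{directly}: it splits into the two cases $|z-\min\{\ell,\gamma\}|\le\eps/2$ and $|z-\min\{\ell,\gamma\}|\ge\eps/2$. In the first case the constraint $|a-\min\{\ell,\gamma\}|\ge\eps$ forces $|a-z|\ge\eps/2$, and the paper shows that the function $z\mapsto \min_{|a-z|\ge\eps/2}\tfrac12(J_1(a)+J_1(2z-a))+J_2(z)-J_0^{**}(z)$ is lower semicontinuous and strictly positive pointwise (by [LJ2]), hence uniformly bounded below on the compact set $\{|z-\min\{\ell,\gamma\}|\le\eps/2\}$. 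In the second case one simply drops the constraint on $a$ and uses that $J_0(z)-J_0^{**}(\ell)-(J_0^{**})'(\ell)(z-\ell)$ is uniformly bounded away from zero on $\{|z-\min\{\ell,\gamma\}|\ge\eps/2\}$. Your approach packages both cases into a single sequential argument; the paper's buys a slightly more explicit $\eta=\min\{\eta_2(\eps/2),\eta_3(\eps)\}$, while yours is shorter and perhaps more conceptual.

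Two small points to clean up. First, the clause ``and then (\ref{ass:unique2}) gives $z_*=\delta_1$'' is simply wrong---(\ref{ass:unique2}) says $J_0=J_{CB}$ on the convexity set, nothing about $\delta_1$---and should be deleted; your argument does not use it and proceeds correctly to $z_*=\min\{\ell,\gamma\}$ in the next clause. Second, the sentence identifying $z_*$ is slightly misstated: it is not $J_0(z_*)=J_0^{**}(z_*)$ alone that forces $z_*=\min\{\ell,\gamma\}$, but the full chain of equalities $J_0(z_*)=J_0^{**}(z_*)=J_0^{**}(\ell)+(J_0^{**})'(\ell)(z_*-\ell)$. For $\ell\ge\gamma$ the last expression equals $J_0(\gamma)$, and uniqueness of the minimizer $\gamma$ of $J_0$ gives $z_*=\gamma$; for $\ell<\gamma$ strict convexity of $J_0^{**}$ on $(-\infty,\gamma]$ together with the affine tail on $[\gamma,\infty)$ forces the tangent-line equality to hold only at $z_*=\ell$. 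With these corrections your compactness step (ruling out $z_k\to\pm\infty$ via the lower bound $G(z_k,a_k)\ge J_0(z_k)-J_0^{**}(\ell)-(J_0^{**})'(\ell)(z_k-\ell)$, and then bounding $a_k$ once $z_k$ is bounded) is sound.
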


\begin{proof}
We distinguish between the cases when $z$ is close to $\min\{\ell,\gamma\}$ or not. Let us first define the function $\tilde J(a,z):=\frac12(J_1(a)+J_1(2z-a))$. Clearly $\tilde J$ is continuous on its domain. If $z$ and $\eps>0$ are such that $\inf_{a:|z-a|\geq\eps}\tilde J(a,z)=+\infty$, inequality (\ref{ineq:F}) holds trivially. Thus, we can assume that $\inf_{a:|z-a|\geq\eps}\tilde J(a,z)$ is finite. From the growth conditions of $J_1$ at $-\infty$, we deduce that for given $z\in\R$, $\eps>0$ the infimum problem $\inf_{a:|z-a|\geq\eps}\tilde J(a,z)$ attains its minimum. 
Furthermore, the assumption [LJ2] and [LJ4] imply that there exists $\eta_1=\eta_1(z,\eps)>0$ such that
\begin{equation}\label{feta1}
 \min_{a:|z-a|\geq\eps}\tilde J(a,z)+J_2(z)-J_0^{**}(z)\geq \eta_1>0.
\end{equation}
The function $f(z):=\min_{a:|a-z|\geq\eps}\tilde J(a,z)$ is lower semicontinuous. Indeed, this can be proven by using the growth conditions of $J_1$.  
Thus, we deduce from inequality (\ref{feta1}) that there exists $\eta_2=\eta_2(\eps)>0$ such that
$$\inf_{z:|z-\min\{\ell,\gamma\}|\leq\eps}\left\{\min_{a:|z-a|\geq\eps}\tilde J(a,z)+J_2(z)-J_0^{**}(z)\right\}\geq \eta_2>0.$$
Let now $|z-\min\{\ell,\gamma\}|\leq \frac{\eps}{2}$. Since $|a-\min\{\ell,\gamma\}|\geq \eps$ implies $|a-z|\geq \frac{\eps}{2}$, we have
$$F(z)\geq J_0^{**}(z)+\eta_2\left(\frac{\eps}{2}\right)- (J_0^{**})'(\ell)\left(z-\ell\right)-J_0^{**}(\ell)\geq\eta_2\left(\frac{\eps}{2}\right).$$
It is left to consider the case $|z-\min\{\ell,\gamma\}|\geq\frac{\eps}{2}$. By the definition of $J_0$, we have
$$F(z)\geq \min_{z:|z-\min\{\ell,\gamma\}|\geq\frac{\eps}{2}}J_0(z)-(J_0^{**})'(\ell)\left(z-\ell\right)-J_0^{**}(\ell)=:\eta_3(\eps)>0.$$
Indeed, the existence of $\eta_3$ as above follows from the strict convexity of $J_0$ on $(-\infty,\gamma)$, that $\gamma$ is the unique minimizer of $J_0$ and $\lim_{z\to\infty}J_0(z)=J_0(\infty)>J_0(\gamma)$. Altogether, the assertion is proven with $\eta(\eps)=\min\left\{\eta_2\left(\frac{\eps}{2}\right),\eta_3(\eps)\right\}$.
\end{proof}

We are now in position to state a compactness result analogously to \cite[Proposition 4.2]{BCi} and \cite[Proposition 4.1]{NNN}. 
\begin{proposition}\label{prop:compact}
 Let $\ell,u_0^{(1)},u_1^{(1)}>0$ and suppose that hypotheses [LJ1]--[LJ4] hold. Let $(k_n)=(k_n^1,k_n^2)$ satisfy (\ref{ass:kn}) and let $(u_n)$ be a sequence of functions such that
\begin{equation}
  \sup_n \H_{1,n}^{\ell,k_n,\T_n}(u_n)<+\infty.\label{eq:prop}
\end{equation}
(1) If $\ell\leq \gamma$, then, up to subsequences, $u_n\to u$ in $L^\infty(0,1)$ with $u(x)=\ell x$, $x\in[0,1]$.\\
(2) In the case $\ell>\gamma$, then, up to subsequences, $u_n\to u$ in $L^1(0,1)$ where $u\in SBV^\ell(0,1)$ is such that
\begin{enumerate}
 \item[(i)] $0<\#S_u<+\infty$;
 \item[(ii)] $[u]>0$ on $S_u$;
 \item[(iii)] $u'=\gamma$ a.e.
\end{enumerate}
\end{proposition}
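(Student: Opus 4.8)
The plan is to read off compactness directly from the rearranged form~(\ref{def:hnl1<}) of the first order energy. There, after the manipulations performed above, the only terms involving the interior nodal values $u_n^2,\dots,u_n^{n-2}$ are the nonnegative quantities $\sigma_n^i(\ell)$ and $\mu_n^i(\ell)$ (nonnegativity of both was checked before~(\ref{def:hnl1<})), while the remaining contributions $\frac12 J_1(u_0^{(1)})$, $\frac12 J_1(u_1^{(1)})$, $-J_0^{**}(\ell)$ and the linear boundary term are constants fixed by the data $\ell,u_0^{(1)},u_1^{(1)}$. Since $u_n\in\A_{\T_n}(0,1)\subset\A_n(0,1)$ (otherwise $\H_{1,n}^{\ell,k_n,\T_n}(u_n)=+\infty$), assumption~(\ref{eq:prop}) therefore gives a bound
\[
\sum_{i=0}^{k_n^1-1}\sigma_n^i(\ell)+\frac12\mu_n^{k_n^1}(\ell)+\sum_{i=k_n^1+1}^{k_n^2-2}\mu_n^i(\ell)+\frac12\mu_n^{k_n^2-1}(\ell)+\sum_{i=k_n^2-1}^{n-2}\sigma_n^i(\ell)\le C_1
\]
uniformly in $n$. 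Moreover, because $\operatorname{dom}J_1=\operatorname{dom}J_2=(0,+\infty)$ by [LJ3], finiteness of the energy forces all nearest-neighbour quotients $w_n^i:=\frac{u_n^{i+1}-u_n^i}{\lambda_n}$ to be strictly positive; in particular $\|u_n\|_{W^{1,1}(0,1)}$ is equibounded (from $\int_0^1 u_n'\,dx=\ell$ and $u_n(0)=0$), so along a subsequence $u_n\rightharpoonup u$ weakly$^*$ in $BV(0,1)$ and $u_n\to u$ in $L^1(0,1)$, with $u\in BV^\ell(0,1)$ from the boundary conditions.

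Fix $\eps>0$ and call an interior index $i\in\{2,\dots,n-2\}$ \emph{bad} if $|w_n^i-\min\{\ell,\gamma\}|\ge\eps$. Every such $w_n^i$ appears in at least one term of the bounded sum above, and this term is then bounded below by a positive constant independent of $n$: if the term is of $\sigma$-type, write $\sigma_n^j(\ell)=\tilde J(w_n^i,z)+J_2(z)-(J_0^{**})'(\ell)(z-\ell)-J_0^{**}(\ell)$ with $z$ the cell's next-to-nearest quotient, and invoke Lemma~\ref{lemma:F} to get $\sigma_n^j(\ell)\ge\eta(\eps)>0$; if it is of $\mu$-type, use $\mu_n^j(\ell)\ge J_0(w_n^i)-J_0^{**}(\ell)-(J_0^{**})'(\ell)(w_n^i-\ell)$ and an estimate entirely analogous to (but simpler than) Lemma~\ref{lemma:F} --- relying only on the strict convexity of $J_0$ on $(-\infty,\gamma)$, the fact that $\gamma$ is the unique minimiser of $J_0$, and $J_0(+\infty)>J_0(\gamma)$ --- to get $\mu_n^j(\ell)\ge\eta'(\eps)>0$. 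Since each $\sigma$- or $\mu$-term contains at most two consecutive quotients, the number of bad indices is at most $N(\eps):=2C_1/\min\{\eta(\eps),\tfrac12\eta'(\eps)\}$, independently of $n$. Consequently the ``bad set'' $B_n\subset[0,1]$, i.e.\ the union of the intervals $\lambda_n(i,i+1)$ with $i$ bad together with the finitely many boundary-layer intervals on which~(\ref{def:boundarycond}) acts, satisfies $|B_n|\le(N(\eps)+4)\lambda_n\to0$, while $|u_n'-\min\{\ell,\gamma\}|<\eps$ on $[0,1]\setminus B_n$.

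From here one argues as in \cite[Proposition 4.2]{BCi} and \cite[Proposition 4.1]{NNN}. Passing to a further subsequence we may assume the number of bad indices is a fixed $M$ and that $\lambda_n i\to x_k$ for the bad indices $i$, producing a finite set $\{x_1,\dots,x_P\}\subset[0,1]$ with $P\le M$. On any compact $K\subset[0,1]\setminus\{x_1,\dots,x_P\}$ there are eventually no bad indices, so $|u_n'-\min\{\ell,\gamma\}|<\eps$ on $K$ for $n$ large; letting $\eps\to0$ along a diagonal sequence yields $u'=\min\{\ell,\gamma\}$ a.e.\ on $[0,1]\setminus\{x_1,\dots,x_P\}$, whence $u$ is affine with that slope on each connected component, so $u\in SBV^\ell(0,1)$ with $S_u\subseteq\{x_1,\dots,x_P\}$ and no Cantor part. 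Each jump is $[u](x_k)=\lim_n(u_n(b_n)-u_n(a_n))$ over shrinking intervals $[a_n,b_n]\ni x_k$ absorbing the nearby bad indices, hence $\ge0$ because all $w_n^i>0$; discarding the indices with vanishing jump leaves $[u]>0$ on $S_u$. If $\ell\le\gamma$, then the absolutely continuous part already carries all the length, $\ell=Du([0,1])=\int_0^1 u'\,dx+\sum_k[u](x_k)=\ell+\sum_k[u](x_k)$, so $S_u=\emptyset$ and $u'\equiv\ell$; together with $u(0)=0$ and the fact that $\int_0^1|u_n'-\ell|\,dx\to0$ (its part over $B_n$ being $\le\ell-\int_{[0,1]\setminus B_n}u_n'\,dx\to\eps$, hence arbitrarily small), this gives $u(x)=\ell x$ and $u_n\to u$ in $L^\infty(0,1)$. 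If instead $\ell>\gamma$, then $u'\equiv\gamma$ a.e.\ and $\sum_k[u](x_k)=\ell-\gamma>0$, so $0<\#S_u<+\infty$; this is (i)--(iii).

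The genuine difficulty is the last paragraph: upgrading the soft information ``$u_n'$ is $\eps$-close to $\min\{\ell,\gamma\}$ off a set of measure $O(\lambda_n)$, with a uniformly bounded number of exceptional indices'' to the sharp $SBV^\ell$ description of the limit --- finitely many, strictly positive jumps, no Cantor part, and the slope exactly $\min\{\ell,\gamma\}$ --- which requires the diagonalisation in $\eps$ and the $BV$ structure theory. By contrast, the new feature relative to the purely atomistic analysis of \cite{NNN} --- the splitting of the chain into an atomistic block (carrying $\sigma_n^i$), a continuum block (carrying $\mu_n^i$) and two half-weighted interface terms --- causes no extra trouble, precisely because the rearrangement leading to~(\ref{def:hnl1<}) leaves no cross terms between blocks and Lemma~\ref{lemma:F} (with its elementary $\mu$-counterpart) yields a uniform positive lower bound for bad contributions in either block.
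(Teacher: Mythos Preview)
Your argument has a concrete gap rooted in a misreading of [LJ3]: that hypothesis says only $(0,+\infty)\subset\operatorname{dom}J_1$, not equality. For Morse potentials~(\ref{def:morse}) one has $\operatorname{dom}J_1=\R$, so finiteness of the energy does \emph{not} force $w_n^i>0$. This breaks two steps. First, your $W^{1,1}$ bound ``from $\int_0^1 u_n'\,dx=\ell$'' fails without positivity; this is easily repaired by invoking the zero-order compactness in Theorem~\ref{theorem:zero}, which uses the superlinear growth of $\Psi$ from [LJ4] to control $\int_{\{u_n'<0\}}|u_n'|$. Second, and more seriously, your justification of $[u](x_k)\ge0$ --- ``hence $\ge0$ because all $w_n^i>0$'' --- is invalid as written. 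What does survive under [LJ1]--[LJ4] is a \emph{uniform lower bound} $w_n^i\ge -C_2$: each individual $\sigma_n^i(\ell)$ or $\mu_n^i(\ell)$ is at most $C_1$, while the superlinear growth of $J_1,J_{CB}$ at $-\infty$ forces their arguments to stay bounded below. With that in hand, the negative contributions of bad indices to $u_n(b_n)-u_n(a_n)$ are $O(\lambda_n)$ and the nonnegativity of the jumps follows; but this step must be written out.

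Once this gap is closed, your route is genuinely different from the paper's and more elementary. The paper fixes the single threshold $2\gamma$, defines $Q_n=\{i:w_n^i>2\gamma\}$, replaces those steep affine pieces by genuine jumps to obtain $v_n\in SBV$ with $v_n'\le2\gamma$ and $\#S_{v_n}=\#Q_n$ bounded, and then invokes the SBV compactness theorem \cite[Theorem~4.7]{AFP} plus a Dunford--Pettis/Vitali argument to identify $u'$ and the jump part. You instead localise the $\eps$-bad indices directly and avoid the black-box compactness. Your diagonalisation needs tighter wording, though: the set $\{x_1,\dots,x_P\}$ depends on $\eps$, so the phrase ``letting $\eps\to0$ along a diagonal sequence yields $u'=\min\{\ell,\gamma\}$ a.e.\ on $[0,1]\setminus\{x_1,\dots,x_P\}$'' is not well-formed. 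The clean version is: for one fixed $\eps_0$ your localisation already gives $S_u\subset\{x_1,\dots,x_{P_0}\}$ (hence $\#S_u<\infty$) and $|u'-\min\{\ell,\gamma\}|\le\eps_0$ a.e.; repeating for $\eps_k\downarrow0$ --- each time possibly along a further subsequence, but the conclusion is about the \emph{fixed} limit $u$ --- yields $u'=\min\{\ell,\gamma\}$ a.e.\ exactly. The $L^\infty$ upgrade for $\ell\le\gamma$ then likewise needs the lower bound $w_n^i\ge -C_2$ in place of positivity.
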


\begin{proof} 
Let $(u_n)$ satisfy (\ref{eq:prop}). With the same arguments as in the proof of Theorem~\ref{theorem:zero}, we have the existence of $u\in BV^\ell(0,1)$ such that, up to subsequences, $u_n\to u$ weakly$^*$ in $BV(0,1)$.\\
Let us show $u_n'\to\min\{\ell,\gamma\}$ in measure in $(0,1)$. For $\eps>0$, we define $$I_n^\eps:=\left\{i\in\{0,...,n-1\}:\left| \frac{u_n^{i+1}-u_n^i}{\lambda_n}-\min\{\ell,\gamma\}\right|>\eps\right\}.$$
By the definition of $\sigma_n^i(\ell)$, $\mu_n^i(\ell)$, see (\ref{def:sigmal}), (\ref{def:mul}), and Lemma~\ref{lemma:F}, we deduce the existence of $\eta=\eta(\eps)>0$ such that $\sigma_n^i(\ell),\mu_n^i(\ell)\geq\eta$ for $i\in I_n^\eps$.  
By (\ref{eq:prop}), there exists a constant $C>0$ such that 
\[C\geq \sum_{i=0}^{k_n^1-1}\sigma_n^i(\ell)+\sum_{i=k_n^1+1}^{k_n^2-2}\mu_n^i(\ell)+\sum_{i=k_n^2-1}^{n-2}\sigma_n^i(\ell)\geq \#I_n^\eps\eta.\]
Hence, by using $|\{x:|u_n'(x)-\min\{\ell,\gamma\}|>\eps\}|=\lambda_n\#I_n^\eps\leq \lambda_n\frac{C}{\eta}$ it follows that $u_n'\to\min\{\ell,\gamma\}$ in measure. Moreover, we can use the above argument in the following way: we define the set 
$$Q_n:=\left\{i\in \{0,...,n-2\}:\frac{u_n^{i+1}-u_n^i}{\lambda_n}>2\gamma \right\}.$$
As above, Lemma~\ref{lemma:F} ensures $\sigma_n^i(\ell),\mu_n^i(\ell)\geq\eta$ for $i\in Q_n$ and some $\eta>0$. From (\ref{eq:prop}), we deduce the equiboundedness of $\#Q_n$.  We define the sequence $(v_n)\subset SBV^\ell(0,1)$ as
\[v_n(x)=\begin{cases}
         u_n(x), &\text{if }x\in(i,i+1)\lambda_n,i\notin Q_n,\\
	 u_n(i\lambda_n), &\text{if }x\in(i,i+1)\lambda_n,i\in Q_n.
        \end{cases}\]
The sequence $(v_n)$ is constructed such that $\lim_{n\to\infty}\int_0^1|u_n-v_n|dx=0$ and thus we can assume, by passing to a subsequence, that $(v_n)$ converges to $u$ in the weak$^*$ topology of $BV(0,1)$. By definition of $v_n$, we have $\#S_{v_n}= \#Q_n$ and thus there exists a constant $C>0$ such that $\sup_n\#S_{v_n}\leq C$. Using $v_n'(x)\leq2\gamma$ a.e., (\ref{ass:psi1}) and (\ref{ass:psi2}), the sequence $(v_n)$ satisfies all assumptions of \cite[Theorem 4.7]{AFP} and we conclude that $u\in SBV^\ell(0,1)$, $v_n'\rightharpoonup u'$ weakly in $L^1(0,1)$, $\liminf_{n\to\infty}\#S_{v_n}\geq \#S_u$ and $D^jv_n$ weakly$^*$ converge to $D^ju$, where $D^jv$ denotes the jump part of the derivative of $v\in BV(\R)$. As a direct consequence, we obtain $\#S_u<+\infty$. By the construction of $(v_n)$, we have $[v_n]>0$ on $S_{v_n}$ and we conclude, by the weak$^*$ convergence of the jump part, assertion (ii).\\
Note that $(v_n)$ is defined such that $|\{x:u_n'(x)\neq v_n'(x)\}|\leq \# S_{v_n}\lambda_n$, which implies $v_n'\to\min\{\ell,\gamma\}$ in measure in $(0,1)$. Combining this with $v_n'\rightharpoonup u'$ in $L^1(0,1)$, we show $u'=\min\{\ell,\gamma\}$ a.e.\ in $(0,1)$. Indeed, by the Dunford-Pettis theorem, we deduce from the relative compactness of $(v_n')\subset L^1(0,1)$ in the weak $L^1(0,1)$--topology that $(v_n')$ is equi-integrable. By extracting a subsequence, we can assume that $v_n'\to\min\{\ell,\gamma\}$ pointwise a.e.\ in $(0,1)$ and by Vitali's convergence theorem it follows $v_n'\to \min\{\ell,\gamma\}$ strongly in $L^1(0,1)$. Thus $u'=\min\{\ell,\gamma\}$ a.e.\ in $(0,1)$. Thus the assertion for $\ell>\gamma$ is proven. In the case $0<\ell\leq\gamma$, we have, up to subsequences, $u_n\to u$ in $L^1(0,1)$ with $u\in BV^\ell(0,1)$, $u'=\ell$ a.e.\ in $(0,1)$ and $[u]>0$ on $S_u$. This implies $u(x)=\ell x$ on $[0,1]$. It is left to show: $u_n\to u$ in $L^\infty(0,1)$. Note that for the above defined sequence $(v_n)$ it holds $u_n'=v_n'+w_n$ a.e. on $(0,1)$ with $w_n\in L^1(0,1)$ and $w_n(x)\geq 0$. Using $v_n'\to\ell$ in $L^1(0,1)$, we deduce from
$$\ell=\int_0^1u_n'(x)dx=\int_0^1v_n'(x)dx+\int_0^1w_n(x)dx$$
that $w_n\to0$ in $L^1(0,1)$. Altogether, we have $u_n'=v_n'+w_n\to \ell$ in $L^1(0,1)$ and thus $u_n\to u$ in $W^{1,1}(0,1)$ with $u(x)=\ell x$. Hence, the assertion follows from the Sobolev inequality on intervals.
\end{proof}

For $\ell>\gamma$ we define the space
\begin{equation}\label{def:sbvcl}
 SBV_c^\ell(0,1):=\{u\in SBV^\ell(0,1)\mbox{ : conditions (i)-(iii) of Proposition~\ref{prop:compact} are satisfied}\},
\end{equation}
as in \cite{NNN}. 
 
Proposition~\ref{prop:compact} tells us that a sequence of deformations $(u_n)$ with equibounded energy converges in $L^1(0,1)$ to a deformation $u$ which has a constant gradient almost everywhere. In the following lemma, we prove that $(u_n)$ yields a sequence of discrete gradients in the atomistic region converging to the same constant. This turns out to be crucial in the proofs of the first order $\Gamma$-limits.
\begin{lemma}\label{lemma:kngegen0}
Suppose that hypotheses [LJ1]--[LJ4] hold. Let $\ell,u_0^{(1)},u_1^{(1)}>0$. Let $(u_n)$ be a sequence of functions such that (\ref{eq:prop}) is satisfied. Let $(k_n)=(k_n^1,k_n^2)$ satisfy (\ref{ass:kn}). Then there exist sequences $(h_n^1),(h_n^2)\subset\N$ with $0\leq h_n^1<k_n^1-2<k_n^2+2<h_n^2\leq n-1$ such that, up to subsequences,
\begin{equation}
 \lim_{n\to\infty} \frac{u_n^{h^i_n+1}-u_n^{h^i_n}}{\lambda_n}=\min\{\ell,\gamma\},\qquad i=1,2.\label{claim:kngegen0}
\end{equation}
\end{lemma}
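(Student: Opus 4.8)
The plan is to combine the non-negative rearrangement (\ref{def:hnl1<}) of $\H_{1,n}^{\ell,k_n,\T_n}$ with Lemma~\ref{lemma:F} and the \emph{microscopic} divergence of the two atomistic regions, i.e.\ assumption (\ref{ass:kn})(i). The mechanism is a pigeonhole argument: the left atomistic region contains $\sim k_n^1\to\infty$ bonds but carries only bounded $\sigma$-energy, so most of its bonds must have scaled length close to $\min\{\ell,\gamma\}$, and the same is true on the right.

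First I would note that in (\ref{def:hnl1<}) all the terms $\sigma_n^i(\ell)$ and $\mu_n^i(\ell)$ are non-negative, whereas the remaining summands depend only on $\ell,u_0^{(1)},u_1^{(1)}$; hence (\ref{eq:prop}) provides a constant $C>0$, independent of $n$, such that $\sum_{i=0}^{k_n^1-1}\sigma_n^i(\ell)\le C$ and $\sum_{i=k_n^2-1}^{n-2}\sigma_n^i(\ell)\le C$. Next, setting $a=\frac{u_n^{i+1}-u_n^i}{\lambda_n}$ and $z=\frac{u_n^{i+2}-u_n^i}{2\lambda_n}$, so that $\frac{u_n^{i+2}-u_n^{i+1}}{\lambda_n}=2z-a$, the definitions (\ref{def:eniu}) and (\ref{def:sigmal}) give $\sigma_n^i(\ell)=\frac12(J_1(a)+J_1(2z-a))+J_2(z)-(J_0^{**})'(\ell)(z-\ell)-J_0^{**}(\ell)$. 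Thus, for a fixed $\eps>0$, Lemma~\ref{lemma:F} shows that $|a-\min\{\ell,\gamma\}|\ge\eps$ forces $\sigma_n^i(\ell)\ge F(z)\ge\eta(\eps)>0$.

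Now the core step. Fix $\eps>0$ and suppose, for contradiction, that for infinitely many $n$ every bond $j\in\{0,\dots,k_n^1-3\}$ satisfies $|\frac{u_n^{j+1}-u_n^j}{\lambda_n}-\min\{\ell,\gamma\}|\ge\eps$. For each such $n$ the previous observation yields $\sigma_n^i(\ell)\ge\eta(\eps)$ for all $i\in\{0,\dots,k_n^1-3\}$, whence $C\ge\sum_{i=0}^{k_n^1-1}\sigma_n^i(\ell)\ge(k_n^1-2)\eta(\eps)$ — impossible since $k_n^1\to\infty$. Hence, for every large $n$ there is $h_n^1\in\{0,\dots,k_n^1-3\}$ with $|\frac{u_n^{h_n^1+1}-u_n^{h_n^1}}{\lambda_n}-\min\{\ell,\gamma\}|<\eps$; the identical reasoning on the right atomistic region, using $\sum_{i=k_n^2-1}^{n-2}\sigma_n^i(\ell)\le C$, the bonds $j\in\{k_n^2+3,\dots,n-2\}$ and the divergence $n-k_n^2\to\infty$, produces $h_n^2\in\{k_n^2+3,\dots,n-2\}$ with the analogous bound. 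These choices satisfy $0\le h_n^1<k_n^1-2<k_n^2+2<h_n^2\le n-1$ as required. Finally, carrying this out with $\eps=1/m$ for $m\in\N$ and selecting along the full sequence $n\mapsto m(n)\to\infty$ in the standard diagonal fashion, I obtain sequences $(h_n^1),(h_n^2)$ with $\frac{u_n^{h_n^i+1}-u_n^{h_n^i}}{\lambda_n}\to\min\{\ell,\gamma\}$ for $i=1,2$ (in fact even for the whole sequence, so the passage to a subsequence is not essential).

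I expect no serious obstacle: the argument is a localized version of the in-measure convergence already established in the proof of Proposition~\ref{prop:compact}. The only delicate point is the index bookkeeping — one must keep $h_n^1,h_n^2$ strictly inside the two atomistic regions, away from the interface indices $k_n^1$ and $k_n^2-1$ at which (\ref{def:hnl1<}) provides only the half-weighted $\mu$-terms instead of full $\sigma$-terms — together with the observation that the pigeonhole estimate works precisely because of hypothesis (\ref{ass:kn})(i).
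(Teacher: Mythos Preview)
Your argument is correct and rests on the same core idea as the paper's proof: the boundedness of $\sum_{i=0}^{k_n^1-1}\sigma_n^i(\ell)$ and $\sum_{i=k_n^2-1}^{n-2}\sigma_n^i(\ell)$ together with $k_n^1,\,n-k_n^2\to\infty$ forces the existence of indices in each atomistic region where the $\sigma$-contribution is small, hence the discrete gradient is close to $\min\{\ell,\gamma\}$. The execution differs slightly. The paper chooses a \emph{vanishing} threshold $1/\sqrt{\tilde k_n}$ with $\tilde k_n=\min\{k_n^1,n-k_n^2\}$, shows the exceptional set $I_n=\{i:\sigma_n^i(\ell)>1/\sqrt{\tilde k_n}\}$ has cardinality $O(\sqrt{\tilde k_n})$, and then for $i\notin I_n$ splits $\sigma_n^i(\ell)$ into the two non-negative pieces
\[
J_2+\tfrac12 J_1+\tfrac12 J_1-J_0\quad\text{and}\quad J_0-J_0^{**}(\ell)-(J_0^{**})'(\ell)(\cdot-\ell),
\]
extracting from each the convergence of $\frac{u_n^{h_n+2}-u_n^{h_n}}{2\lambda_n}$ and then, via [LJ2], of both adjacent bond lengths. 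You instead keep $\eps$ fixed, invoke Lemma~\ref{lemma:F} directly to get the uniform lower bound $\sigma_n^i(\ell)\ge\eta(\eps)$ whenever a bond is $\eps$-far from $\min\{\ell,\gamma\}$, and then run a diagonal argument over $\eps=1/m$. Your route is somewhat shorter because Lemma~\ref{lemma:F} already packages the decomposition; the paper's route yields the slightly stronger by-product that for \emph{every} index outside $I_n$ both consecutive discrete gradients converge, which is convenient in the later proofs (e.g.\ (\ref{eq:elastic_h_n}) and (\ref{eq:fracT_n})) where the second gradient $\frac{u_n^{h_n+2}-u_n^{h_n+1}}{\lambda_n}$ is used.
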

\begin{proof}
Let us define $(\tilde k_n)\subset \N$ by $\tilde k_n=\min\{k_n^1,n-k_n^2\}$ and 
\begin{equation*}
I_n:=\{i\in\{0,...,k_n^1-1,k_n^2-1,....,n-2\}:\sigma_n^i(\ell)>\frac{1}{\sqrt{\tilde k_n}}\}.\end{equation*}
By (\ref{eq:prop}) there exists $C>0$ such that 
\begin{equation*}
C\geq\sup_n \l(\sum_{i=0}^{k_n^1-1}\sigma_n^i(\ell)+\sum_{i=k^2_n-1}^{n-2}\sigma_n^i(\ell)\r)\geq\sup_n\sum_{i\in I_n} \frac{1}{\sqrt{\tilde k_n}}=\sup_n\frac{\#I_n}{\sqrt{\tilde k_n}}.
\end{equation*}
Passing to the limit yields $\limsup_{n\to\infty}\frac{\#I_n}{\sqrt{\tilde k_n}}\leq C$ and we have $\#I_n=\mc{O}(\sqrt{\tilde k_n})$.\\
Now let $i\notin I_n$. By using the definition of $J_0$ and $J_0(z)\geq (J_0^{**})'(\ell)(z-\ell)+J_0^{**}(\ell)$, we deduce from $0\leq\sigma_n^i(\ell)\leq \frac{1}{\sqrt{\tilde k_n}}$
\begin{align}
 0\leq&J_2\left(\frac{u_n^{i+2}-u_n^i}{2\lambda_n}\right)+\frac{1}{2}J_1\left(\frac{u_n^{i+2}-u_n^{i+1}}{\lambda_n}\right)+\frac{1}{2}J_1\left(\frac{u_n^{i+1}-u_n^{i}}{\lambda_n}\right)\notag\\
&-J_0\left(\frac{u_n^{i+2}-u_n^i}{2\lambda_n}\right)\leq\frac{1}{\sqrt{\tilde k_n}},\label{ineq:lemma1}\\
 0\leq& J_0\left(\frac{u_n^{i+2}-u_n^i}{2\lambda_n}\right)-J_0^{**}(\ell)-(J_0^{**})'(\ell)\l(\frac{u_n^{i+2}-u_n^i}{2\lambda_n}-\ell\r)\leq\frac{1}{\sqrt{\tilde k_n}}\label{ineq:lemma2}.
\end{align}
Let $(h_n)\subset\N$ be such that $h_n\in\{0,...,k_n^1-1,k_n^2-1,...,n-2\}$ and $h_n\notin I_n$. By using the fact that $J_0(z)=J_0^{**}(\ell)+(J_0^{**})'(\ell)(z-\ell)$ if and only if $z=\min\{\ell,\gamma\}$, and [LJ3] we conclude from (\ref{ass:kn}) and (\ref{ineq:lemma2})
\[\frac{u_n^{h_n+2}-u_n^{h_n}}{2\lambda_n}\to\min\{\ell,\gamma\}\quad\mbox{as $n\to\infty$.}\] 
Combining this with (\ref{ineq:lemma1}) and assumption [LJ2], [LJ3], we deduce
 \begin{equation*}
  \lim_{n\to\infty}\frac{u_n^{h_n+1}-u_n^{h_n}}{\lambda_n}=\min\{\ell,\gamma\} \quad\mbox{ and }\quad \lim_{n\to\infty} \frac{u_n^{h_n+2}-u_n^{h_n+1}}{\lambda_n}=\min\{\ell,\gamma\}.\label{eq:lim1}
\end{equation*}
Hence, for sequences $(h_n^1),(h_n^2)\subset\N$ with $h_n^1\in\{0,...,k_n^1-3\}=:K_n^1$ and $h_n^2\in\{k_n^2+3,...,n-1\}=:K_n^2$ and $h_n^i\notin I_n$, for $n$ big enough and $i=1,2$, we deduce 
$$\lim_{n\to\infty} \frac{u_n^{h_n^i+1}-u_n^{h_n^i}}{\lambda_n}=\min\{\ell,\gamma\}.$$
It is left to prove existence of such sequences. Since $\#I_n=\mathcal O(\sqrt{\tilde k_n})$, we conclude by the definition of $k_n$ in (\ref{ass:kn}) that $K_n^i\setminus\l(I_n\cap K_n^i\r)\neq\emptyset$ for $n$ sufficiently large and $i=1,2$ which shows the existence.
\end{proof}

\subsection{The case $\ell\leq\gamma$}

Like in \cite{NNN}, we distinguish between the cases $\ell\leq\gamma$ and $\ell>\gamma$, where $\ell$ denotes the boundary condition on the last atom in the chain and $\gamma$ denotes the unique minimum point of $J_0$. In the case of $\ell\leq\gamma$ no fracture occurs by Proposition~\ref{prop:compact}. In this section, we show that the first order $\Gamma$-limits of $\H_n^{\ell,k_n,\T_n}$ and $H_{n}^{\ell}$ coincide if $\ell\leq\gamma$.\\
For any $0<\ell\leq\gamma$  and $\theta>0$, we define the boundary layer energy $B(\theta,\ell)$ as
\begin{equation}\label{eq:elasticboundary}
 \begin{split}
  B(\theta,\ell)=&\inf_{N\in\mathbb{N}}\min\bigg\{\frac{1}{2}J_1(v^1-v^0)+\sum_{i\geq0}\bigg\{J_2\left(\frac{v^{i+2}-v^i}{2}\right)+\frac{1}{2}J_1(v^{i+2}-v^{i+1})\\
 &\quad +\frac{1}{2} J_1(v^{i+1}-v^i)-J_0^{**}(\ell)-(J_0^{**})'(\ell)\left(\frac{v^{i+2}-v^i}{2}-\ell\right)\bigg\}:\\
 &\quad v:\mathbb{N}\rightarrow\mathbb{R},v^0=0,v^1=\theta,v^{i+1}-v^i=\ell \text{ if }i\geq N\bigg\}.
 \end{split}
\end{equation}
This was already defined in \cite{NNN}. The constraint on the difference $v^1-v^0$ is due to the boundary condition on the first and second atom and the  last and last but one. The terms in the sum have the same structure as $\sigma_n^i(\ell)$ defined in (\ref{def:sigmal}) and are always non-negative. 

\begin{theorem}\label{theorem:elastic}
 Suppose that hypotheses [LJ1]--[LJ4] hold. Let $0<\ell\leq\gamma$ and $u_0^{(1)},u_1^{(1)}>0$. Let $k_n^1,k_n^2$ satisfy (\ref{ass:kn}) and let $\T_n\subset \{0,1,...,n\}$ such that $\{0,1,...,k_n^1,k_n^2,...,n\}\subset \T_n$. Then $H_{1,n}^\ell$ as well as $\H_{1,n}^{\ell,k_n,\T_n}$ defined in (\ref{def:hn1qc}) $\Gamma$-converge with respect to the $L^\infty(0,1)$--topology to the functional $H_1^\ell$ defined by
\begin{equation*}
 H_1^\ell(u)=\begin{cases}
           B(u_0^{(1)},\ell)+B(u_1^{(1)},\ell)-J_0(\ell)-J_0'(\ell)\left(\frac{u_0^{(1)}+u_1^{(1)}}{2}-\ell\right) & \text{if } u(t)=\ell t,\\
	   +\infty & else
          \end{cases}
\end{equation*}
on $W^{1,\infty}(0,1)$.
\end{theorem}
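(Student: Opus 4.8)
The plan is to prove the two $\Gamma$-convergence statements in parallel, exploiting the rewritten form \eqref{def:hnl1<} of the energy $\H_{1,n}^{\ell,k_n,\T_n}$. Since for $\ell\le\gamma$ we have $J_0^{**}=J_{CB}=J_0$ near $\ell$ and the admissible limit deformation is forced by Proposition~\ref{prop:compact} to be the affine function $u(t)=\ell t$, the whole analysis reduces to understanding the boundary layer terms. The key observation is that the central sums $\sum_{i=k_n^1+1}^{k_n^2-2}\mu_n^i(\ell)$ together with the two half-terms $\tfrac12\mu_n^{k_n^1}(\ell)+\tfrac12\mu_n^{k_n^2-1}(\ell)$ are non-negative and, for a recovery sequence, can be made to vanish in the limit by taking the discrete gradient equal to $\ell$ throughout the continuum region; meanwhile the left sum $\sum_{i=0}^{k_n^1-1}\sigma_n^i(\ell)$ and the right sum $\sum_{i=k_n^2-1}^{n-2}\sigma_n^i(\ell)$ are exactly the discrete analogues of the two boundary layer problems defining $B(u_0^{(1)},\ell)$ and $B(u_1^{(1)},\ell)$ in \eqref{eq:elasticboundary}. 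Because the energy density $\E_n^i$ appearing in $\sigma_n^i$ is the same one that enters $B(\theta,\ell)$, and because assumption \eqref{ass:kn}(i) guarantees $k_n^1,\,n-k_n^2\to\infty$, the atomistic regions are asymptotically half-infinite chains, which is precisely the domain over which the infimum in \eqref{eq:elasticboundary} is taken.

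For the \textbf{liminf inequality}, I would take a sequence $(u_n)$ with $\sup_n\H_{1,n}^{\ell,k_n,\T_n}(u_n)<\infty$; by Proposition~\ref{prop:compact}(1) it converges in $L^\infty$ to $u(t)=\ell t$. Using \eqref{def:hnl1<}, non-negativity of the $\mu_n^i(\ell)$ terms lets me discard the central contribution and bound $\H_{1,n}^{\ell,k_n,\T_n}(u_n)$ from below by $\tfrac12 J_1(u_0^{(1)})+\sum_{i=0}^{k_n^1-1}\sigma_n^i(\ell)+\sum_{i=k_n^2-1}^{n-2}\sigma_n^i(\ell)+\tfrac12 J_1(u_1^{(1)})-J_0^{**}(\ell)-(J_0^{**})'(\ell)\big(\tfrac{u_0^{(1)}+u_1^{(1)}}{2}-\ell\big)$. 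To match the left sum against $B(u_0^{(1)},\ell)$ I use Lemma~\ref{lemma:kngegen0}: there is $h_n^1<k_n^1-2$ with $(u_n^{h_n^1+1}-u_n^{h_n^1})/\lambda_n\to\ell$. Truncating the test chain at index $h_n^1$ and, if necessary, modifying finitely many values beyond $h_n^1$ to reach gradient exactly $\ell$ (using [LJ3], $J_0\in C^1$, to control the error $o(1)$ this creates), the truncated discrete energy is an admissible competitor in \eqref{eq:elasticboundary} with $\theta=u_n^1/\lambda_n=u_0^{(1)}$, hence $\tfrac12 J_1(u_0^{(1)})+\sum_{i=0}^{h_n^1-1}\sigma_n^i(\ell)\ge B(u_0^{(1)},\ell)+o(1)$; the remaining terms $\sum_{i=h_n^1}^{k_n^1-1}\sigma_n^i$ are non-negative and discarded. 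The same argument on the right with $h_n^2>k_n^2+2$ gives the $B(u_1^{(1)},\ell)$ bound, and since $J_0=J_0^{**}$ and $J_0'=(J_0^{**})'$ near $\ell\le\gamma$, passing to the liminf yields $H_1^\ell(u)$.

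For the \textbf{limsup inequality}, I only need a recovery sequence for $u(t)=\ell t$. Fix $\delta>0$ and pick, via the definition of $B(u_0^{(1)},\ell)$, a near-optimal $N_1$ and profile $v$ with $v^0=0$, $v^1=u_0^{(1)}$, $v^{i+1}-v^i=\ell$ for $i\ge N_1$, achieving energy $\le B(u_0^{(1)},\ell)+\delta$; choose an analogous profile $w$ for the right boundary. For $n$ large (so that $k_n^1>N_1$, $n-k_n^2>N_2$, which is possible by \eqref{ass:kn}(i)) define $u_n$ to equal $\lambda_n v^i$ near $i=0$, the affine interpolation of slope $\ell$ in the bulk, and the reflected $w$-profile near $i=n$; this lies in $\A_{\T_n}(0,1)$ because $\{0,\dots,k_n^1,k_n^2,\dots,n\}\subset\T_n$ makes every atom in the boundary layers a repatom, and slope $\ell$ in between is trivially admissible. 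Then $u_n\to u$ in $L^\infty$, the $\mu_n^i(\ell)$ and central correction terms all vanish (gradient is exactly $\ell$ there), the left and right $\sigma$-sums reproduce the truncated boundary-layer energies $\le B(u_0^{(1)},\ell)+B(u_1^{(1)},\ell)+2\delta$, and the remaining constant terms give $-J_0(\ell)-J_0'(\ell)\big(\tfrac{u_0^{(1)}+u_1^{(1)}}{2}-\ell\big)$; letting $\delta\to0$ finishes the upper bound. The statement for $H_{1,n}^\ell$ is the special case $\T_n=\{0,\dots,n\}$ (or follows directly from \cite{NNN}).

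The main obstacle I anticipate is the liminf step where one must convert a finite-energy \emph{segment} of the competitor $(u_n)$ into an admissible test function for \eqref{eq:elasticboundary}: the boundary-layer problem demands the gradient to be \emph{exactly} $\ell$ from some index on, whereas Lemma~\ref{lemma:kngegen0} only delivers a gradient that is $\ell$ in the limit at a single index $h_n^1$. Patching this — freezing the chain at $h_n^1$ and inserting a short affine ramp of slope $\ell$ — costs an energy error that must be shown to be $o(1)$; this is where the $C^1$-regularity of $J_0$ and $J_{CB}$ from [LJ3], together with $(u_n^{h_n^1+1}-u_n^{h_n^1})/\lambda_n\to\ell\le\gamma$ (a point where $J_0=J_{CB}$ and the relevant derivatives agree), is essential. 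One also has to make sure the modification keeps all intermediate gradients in $\operatorname{dom}J_1$, which holds for $n$ large since the gradient is close to $\ell>0$; the same subtlety reappears, more delicately, in handling the prescribed values $u_n^1=\lambda_n u_0^{(1)}$, $u_n^{n-1}=\ell-\lambda_n u_1^{(1)}$, but these are exactly the constraints $v^1-v^0=u_0^{(1)}$, $w^1-w^0=u_1^{(1)}$ already built into $B(\theta,\ell)$, so no further work is needed there.
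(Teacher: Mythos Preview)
Your proposal is correct and follows essentially the same route as the paper's proof: liminf via Proposition~\ref{prop:compact} and Lemma~\ref{lemma:kngegen0}, discarding the non-negative central $\mu_n^i(\ell)$ terms, and matching the two atomistic tails against the half-infinite boundary-layer problem \eqref{eq:elasticboundary}; limsup via near-optimal profiles $v,w$ for $B(u_0^{(1)},\ell)$, $B(u_1^{(1)},\ell)$ glued by an affine bridge, noting that $\{0,\dots,k_n^1,k_n^2,\dots,n\}\subset\T_n$ and $k_n^1>N_1$, $n-k_n^2>N_2$ force $u_n\in\A_{\T_n}(0,1)$.

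One small slip in your limsup: you cannot in general take the bulk slope to be \emph{exactly} $\ell$ and simultaneously satisfy $u_n(0)=0$, $u_n(1)=\ell$, because $v^{N_1+2}$ and $w^{-N_2-2}$ need not sit on the line $i\mapsto\ell i$. The paper (following \cite{NNN}) connects the two boundary profiles by the affine segment of slope
\[
\frac{\ell+\lambda_n(w^{-N_2-2}-v^{N_1+2})}{n-N_1-N_2-4}=\ell+O(\lambda_n),
\]
so the central $\mu_n^i(\ell)$ (and the $\sigma_n^i(\ell)$ for $N_1<i<k_n^1$, $k_n^2\le i<n-N_2-2$) are not identically zero but of order $O(\lambda_n^{1+\alpha})$ each, using $J_{CB}\in C^{1,\alpha}$ together with $J_{CB}(\ell)=J_0^{**}(\ell)$ and $J_{CB}'(\ell)=(J_0^{**})'(\ell)$; their sum over $O(n)$ indices is $O(\lambda_n^{\alpha})\to0$. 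With this correction your argument goes through verbatim.
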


\begin{proof} The proof for the convergence of $H_{1,n}^\ell$ is given in \cite[Theorem 4.1]{NNN}. Next we outline how this proof can be extended to the case $\H_{1,n}^{\ell,k_n,\T_n}$.\\

\textbf{Liminf inequality.} 
We show that for any sequence $u_n\rightarrow u$ in $L^\infty(0,1)$ with equibounded energy $\H_{1,n}^{\ell,k_n,\T_n}$ 
\begin{equation}
 \liminf_{n\to\infty} \H_{1,n}^{\ell,k_n,\T_n}(u_n)\geq B(u_0^{(1)},\ell)+B(u_1^{(1)},\ell)-J_0(\ell)-J_0'(\ell)\left(\frac{u_0^{(1)}+u_1^{(1)}}{2}-\ell\right).\label{eq:liminfelastic2}
\end{equation}
Proposition~\ref{prop:compact} implies that $u(t)=\ell t$ a.e. in $[0,1]$ and by Lemma~\ref{lemma:kngegen0} we can choose sequences of natural numbers $(h^1_n),(h^2_n)$ such that $h_n^1<k_n^1-2$, $h_n^2>k_n^2$ and
\begin{equation}\label{eq:elastic_h_n}
 \lim_{n\to\infty} \frac{u_n^{h^1_n+2}-u_n^{h^1_n+1}}{\lambda_n}=\ell,\quad\lim_{n\to\infty} \frac{u_n^{h^2_n+2}-u_n^{h^2_n+1}}{\lambda_n}=\ell.
\end{equation}
Using $\sigma_n^i(\ell),\mu_n^i(\ell)\geq0$, we obtain from (\ref{def:hnl1<})
\begin{align*}
 \H_{1,n}^{\ell,k_n,\T_n}(u_n)\geq&\frac12J_1(u_0^{(1)})+\sum_{i=0}^{h_n^1}\sigma_n^i(\ell)+\sum_{i=h_n^2+1}^{n-2}\sigma_n^i(\ell)+\frac12J_1(u_1^{(1)})-J_0^{**}(\ell)\\
&-\l(J_0^{**}\r)'(\ell)\l(\frac{u_0^{(1)}+u_1^{(1)}}{2}-\ell\r).
\end{align*}
By using (\ref{eq:elastic_h_n}) and the estimates \cite[(4.20)]{NNN} and \cite[(4.23)]{NNN}, we obtain
\begin{align}
 \frac12J_1\l(u_0^{(1)}\r)+\sum_{i=0}^{h_n^1}\sigma_n^i(\ell)\geq& B(u_0^{(1)},\ell)-\omega_1(n),\label{ineq:elasticinfleft}\\
 \frac12J_1\l(u_1^{(1)}\r)+\sum_{i=h_n^2+1}^{n-2}\sigma_n^i(\ell)\geq& B(u_1^{(1)},\ell)-\omega_2(n),\label{ineq:elasticinfright}
\end{align}
with $\omega_1(n),\omega_2(n)\to0$ as $n\to\infty$, which yields (\ref{eq:liminfelastic2}).\\

\textbf{Limsup inequality.} We can use the same recovery sequence as in the proof of \cite[Theorem 4.1]{NNN}. Since $H_{1}^{\ell}(u)$ is only finite if $u(t)=\ell t$ it is sufficient to consider just this case. We construct a sequence $(u_n)$ which satisfies the boundary conditions and converges to $u$ in $L^\infty(0,1)$ such that
\[\limsup_{n\to\infty}\H_{1,n}^{\ell,k_n,\T_n}(u_n)\leq B(u_0^{(1)},\ell)+B(u_1^{(1)},\ell)-J_0^{**}(\ell)-\l(J_0^{**}\r)'(\ell)\left(\frac{u_0^{(1)}+u_1^{(1)}}{2}-\ell\right).\]
Let $\eta>0$. By the definition of $B(u_0^{(1)},\ell)$, there exists $v:\mathbb{N}\rightarrow \mathbb{R}$ and $N_1\in\mathbb{N}$ such that $v^0=0,v^1=u_0^{(1)},v^{i+1}-v^i=\ell$ for $i\geq N_1$ and
\begin{equation}
\begin{split}
 \frac{1}{2}J_1(v^1-v^0)+\sum_{i\geq 0}\bigg\{J_2\left(\frac{v^{i+2}-v^{i}}{2}\right)+\frac{1}{2}\left(J_1\left(v^{i+2}-v^{i+1}\right)+J_1\left(v^{i+1}-v^{i}\right)\right)\\
-J_0^{**}(\ell)-\l(J_0^{**}\r)'(\ell)\left(\frac{v^{i+2}-v^i}{2}-\ell\right)\bigg\}\leq  B(u_0^{(1)},\ell)+\eta. \label{ineq:supleftboundary}
\end{split}
\end{equation}
Similarly we can find $w:-\mathbb{N}\to\mathbb{R}$ and $N_2\in\mathbb{N}$ with $w^0=0,w^0-w^{-1}=u_1^{(1)},w^i-w^{i-1}=\ell$ if $i\leq-N_2$ such that
\begin{equation}
\begin{split}
\frac{1}{2}J_1(w^0-w^{-1})+\sum_{i\leq 0}\bigg\{J_2\left(\frac{w^{i}-w^{i-2}}{2}\right)+\frac{1}{2}\left(J_1\left(w^{i}-w^{i-1}\right)+J_1\left(w^{i-1}-w^{i-2}\right)\right)\\
-J_0^{**}(\ell)-\l(J_0^{**}\r)'(\ell)\left(\frac{w^{i}-w^{i-2}}{2}-\ell\right)\bigg\}\leq  B(u_1^{(1)},\ell)+\eta.\label{ineq:suprightboundary} 
\end{split}
\end{equation}
By means of the functions $v$ and $w$ we can construct a recovery sequence $(u_n)$ for $u$ 
$$
 u_n^i=\begin{cases}
        \lambda_n v^i & \text{if } 0\leq i\leq N_1+2,\\
	\lambda_nv^{N_1+2}+\frac{\ell+\lambda_n(w^{-N_2-2}-v^{N_1+2})}{n-N_1-N_2-4}(i-N_1-2) &\text{if } N_1+2\leq i\leq n-N_2-2,\\
	\ell+\lambda_nw^{i-n}&\text{if }n-N_2-2\leq i\leq n.\\
       \end{cases}
$$
The functions $v$ and $w$ are chosen in such a way that $u_n$ satisfies the boundary conditions (\ref{def:boundarycond}) for every $n\in\mathbb{N}$. Moreover, since $k_n^1\to+\infty$ and $n-k_n^2\to+\infty$ we can assume $N_1+2\leq k_n^1$ and $n-N_2-2\geq k_n^2$. This implies that $u_n$ is linear on $\lambda_n(k_n^1,k_n^2)$ and thus $u_n\in\A_{\T_n}(0,1)$ for arbitrary $\T_n$ satisfying $\{0,...,k_n^1,k_n^2,..n\}\subset\T_n$. Using (\ref{ineq:supleftboundary}) and (\ref{ineq:suprightboundary}) we obtain
\begin{align*}
 \frac{1}{2}J_1\left(\frac{u_n^1-u_n^0}{\lambda_n}\right)+\sum_{i=0}^{N_1}\sigma_n^i(\ell)\leq& B(u_0^{(1)},\ell)+\eta,\\
 \frac{1}{2}J_1\left(\frac{u_n^n-u_n^{n-1}}{\lambda_n}\right)+\sum_{i=n-N_2-2}^{n-2}\sigma_n^i(\ell)\leq& B(u_1^{(1)},\ell)+\eta,
\end{align*}
which is shown in detail in \cite{NNN}. It remains to show that
$$\Sigma:=\sum_{i=N_1+1}^{k_n^1-1}\sigma_n^i(\ell)+\frac{1}{2}\mu_n^{k^1_n}(\ell)+\sum_{i=k_n^1+1}^{k_n^2-2}\mu_n^i(\ell)+\frac{1}{2}\mu_n^{k_n^2-1}(\ell)+\sum_{i=k_n^2-1}^{n-N_2-3}\sigma_n^i(\ell) $$ 
is infinitesimal as $n\to\infty$. This follows also directly from the proof of \cite[Theorem 4.1]{NNN}. Indeed, in \cite[Theorem 4.1]{NNN} it is shown that for the above sequence it holds $\sum_{i=N_1+1}^{n-N_2-3}\sigma_n^i(\ell)$ tends to zero as $n\to\infty$. By using the fact that $u_n$ is linear on $\lambda_n(N_1+2,n-N_2-2)$ we have $\sigma_n^i(\ell)=\mu_n^i(\ell)$ for $i=N_1+2,...,n-N_2-4$ and thus the statement follows.
\end{proof}

\begin{remark}\label{rem:elastic}
 In the proof of Theorem~\ref{theorem:elastic}, the assumption (\ref{ass:kn}) (i) is crucial. If one drops this assumption, for example to let $k_n^1$ and $n-k_n^2$ be independent of $n$, the first order $\Gamma$-limits of $H_n^{\ell,k_n,\T_n}$ and $\H_n^\ell$ do not coincide in general. In this case the boundary layer energies $B(\theta,\ell)$ would be replaced by some ``truncated'' boundary layer energies $\tilde B(\theta,\ell)$ in the first order $\Gamma$-limit of $\H_n^{\ell,k_n\T_n}$. To quantify the difference between $B(\theta,\ell)$ and $\tilde B(\theta,\ell)$ one has to perform a deeper analysis, as in \cite{Hud}, on the decay of the boundary layers.   
\end{remark}

%
%
\subsection{The case $\ell>\gamma$}
According to Proposition~\ref{prop:compact}, the case $\ell>\gamma$ leads to fracture. Each crack costs a certain amount of fracture energy, cf.\ \cite{BCi,NNN}. We will show that this fracture energy depends on whether the crack is located in $(0,1)$ or $\{0,1\}$ and on the choice of the representative atoms $\T=(\T_n)$ close to the crack.\\
We repeat the definition of the boundary layer energy when fracture occurs at a boundary point from \cite{NNN}. For $\theta>0$, this is given by
\begin{equation}\label{fracture:bb}
 \begin{split}
  B_b(\theta)=&\inf_{k\in\mathbb{N}}\min\bigg\{\frac{1}{2}J_1(v^1-v^0)+\sum_{i=0}^{k-1}\bigg\{J_2\left(\frac{v^{i+2}-v^i}{2}\right)\\
 &\quad +\frac{1}{2}J_1(v^{i+2}-v^{i+1})+\frac{1}{2} J_1(v^{i+1}-v^i)-J_0(\gamma)\bigg\}:\\
 &\quad v:\mathbb{N}\rightarrow\mathbb{R},v^{k+1}=0,v^{k+1}-v^k=\theta\bigg\}.
 \end{split}
\end{equation}
We define $B(\gamma)$ as in \cite{BCi,NNN}
\begin{equation}\label{fracture:b}
 \begin{split}
  B(\gamma)=&\inf_{N\in\mathbb{N}}\min\bigg\{\frac{1}{2}J_1(v^1-v^0)+\sum_{i\geq0}\bigg\{J_2\left(\frac{v^{i+2}-v^i}{2}\right)\\
 &\quad +\frac{1}{2}J_1(v^{i+2}-v^{i+1})+\frac{1}{2} J_1(v^{i+1}-v^i)-J_0(\gamma)\bigg\}:\\
 &\quad v:\mathbb{N}\rightarrow\mathbb{R},v^0=0,v^{i+1}-v^i=\gamma\text{ if }i\geq N\bigg\}.
 \end{split}
\end{equation}

Next we recall \cite[Theorem 4.2]{NNN} and explain how this theorem changes in the case of the above quasicontinuum model.
\begin{theorem}\cite[Theorem 4.2.]{NNN}\label{th:fracturennn}
 Suppose that hypotheses [LJ1]--[LJ4] hold. Let $\ell>\gamma$ and $u_0^{(1)},u_1^{(1)}>0$. Then $H_{1,n}^{\ell}$ $\Gamma$-converges with respect to the $L^1(0,1)$--topology to the functional $H_{1}^{\ell}$ defined by
\begin{equation}\label{def:limitfrac}
\begin{split}
H_{1}^{\ell}(u)=&B\l(u_0^{(1)},\gamma\r)(1-\#(S_u\cap\{0\}))+B\l(u_1^{(1)},\gamma\r)(1-\#(S_u\cap\{1\}))-J_0(\gamma)\\
	  &+B_{BJ}\l(u_0^{(1)}\r)\#(S_u\cap\{0\})+B_{BJ}\l(u_1^{(1)}\r)\#(S_u\cap\{1\})+B_{IJ}\#\l(S_u\cap\l(0,1\r)\r)
\end{split}
\end{equation}
if $u\in SBV_c^\ell(0,1)$, and $+\infty$ else on $L^1(0,1)$, where, for $\theta>0$,
\begin{align}
 B_{BJ}(\theta)=\frac{1}{2}J_1(\theta)+B_b(\theta)+B(\gamma)-2J_0(\gamma)\label{eq:fractureBJ}
\end{align}
is the boundary layer energy due to a jump at the boundary, while
\begin{align}
 B_{IJ}=2B(\gamma)-2J_0(\gamma)\label{eq:fractureIJ}
\end{align}
is the boundary layer energy due to a jump in an internal point of $(0,1)$ and $B(\theta,\gamma)$ denotes the elastic boundary layer energy defined in (\ref{eq:elasticboundary}).
\end{theorem}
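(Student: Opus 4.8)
The plan is to prove Theorem~\ref{th:fracturennn} following the same scheme as the zeroth-order result, i.e.\ by establishing a liminf inequality and constructing a recovery sequence, using the decomposition~(\ref{def:hnl1<}) of the rescaled energy into non-negative boundary-layer-type terms. By Proposition~\ref{prop:compact}, any sequence $(u_n)$ with equibounded energy converges (up to subsequences) in $L^1(0,1)$ to some $u\in SBV_c^\ell(0,1)$, so both inequalities only need to be checked on such limits. The case $\ell>\gamma$ is the relevant one here.

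\textbf{Liminf inequality.} Let $u_n\to u$ in $L^1(0,1)$ with $u\in SBV_c^\ell(0,1)$ having jump set $S_u=\{x_1,\dots,x_M\}$. First I would localize: around each jump point $x_j\in(0,1)$ the discrete gradients must develop a large ``crack'' difference, while away from the jumps and near the boundary Lemma~\ref{lemma:kngegen0} provides indices $h_n^1<k_n^1-2$ and $h_n^2>k_n^2$ where the discrete gradient tends to $\gamma$. Cutting the sum in~(\ref{def:hnl1<}) at these ``good'' indices and at indices isolating each internal jump, one bounds $\H_{1,n}^\ell(u_n)$ from below by a sum of pieces. The piece between two consecutive good indices surrounding an internal jump is, after translating the index, exactly of the form minimized in the definition~(\ref{fracture:b}) of $B(\gamma)$, done twice (once on each side of the crack), minus the normalization $2J_0(\gamma)$ — this gives $B_{IJ}$. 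The leftmost piece, involving $\tfrac12 J_1(u_0^{(1)})$ and the $\sigma_n^i(\gamma)$ for $i$ up to $h_n^1$, is bounded below by $B(u_0^{(1)},\gamma)$ if there is no jump at $0$, and by $\tfrac12 J_1(u_0^{(1)})+B_b(u_0^{(1)})+B(\gamma)-2J_0(\gamma)=B_{BJ}(u_0^{(1)})$ if there is a jump at $0$ (the split is forced by whether $u_n^1-u_n^0$ stays bounded or the crack sits at the first bond); symmetrically at the right endpoint. Summing these lower bounds, using $\sigma_n^i(\gamma),\mu_n^i(\gamma)\ge0$ to discard all remaining terms and that the continuum $J_{CB}$ bonds contribute nonnegatively, yields $\liminf_n\H_{1,n}^\ell(u_n)\ge H_1^\ell(u)$. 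One extra point: one must rule out $S_u=\emptyset$, which is handled since $\ell>\gamma$ forces at least one jump by Proposition~\ref{prop:compact}(i), so the $-J_0(\gamma)$ constant is always present exactly once.

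\textbf{Limsup inequality.} Given $u\in SBV_c^\ell(0,1)$ with jumps at $x_1<\dots<x_M$, I would build $u_n$ by patching: near each internal jump $x_j$ place a rescaled copy of a near-optimal profile for $B(\gamma)$ on each side (two boundary layers meeting at the crack); near $0$ and $1$ use either an optimal elastic boundary-layer profile for $B(u_0^{(1)},\gamma)$, $B(u_1^{(1)},\gamma)$ when the jump is interior, or an optimal profile realizing $B_b(u_0^{(1)})$ glued to one realizing $B(\gamma)$ when there is a jump at the boundary; in between all these layers let $u_n$ have constant discrete gradient $\gamma$ so that $\sigma_n^i(\gamma)=\mu_n^i(\gamma)=0$ there. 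Because the profiles defining $B(\gamma)$ and $B_b$ stabilize after finitely many steps ($N$ or $k$ atoms), while $k_n^1\to\infty$ and $n-k_n^2\to\infty$ by~(\ref{ass:kn})(i), all these finite layers fit inside the atomistic regions for $n$ large, so $u_n\in\A_{\T_n}(0,1)$ automatically; moreover the boundary conditions~(\ref{def:boundarycond}) are met by choosing $v^1=u_0^{(1)}$, $w^0-w^{-1}=u_1^{(1)}$ in the profiles. Passing to the limit and using $\eta\to0$ in the near-optimality gives $\limsup_n\H_{1,n}^\ell(u_n)\le H_1^\ell(u)$.

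\textbf{Main obstacle.} The delicate part is the liminf inequality, specifically the bookkeeping that converts the truncated sums of $\sigma_n^i$ between the good indices $h_n^i$ supplied by Lemma~\ref{lemma:kngegen0} into the \emph{infimum} defining $B(\gamma)$ or $B_b$ — one must verify that the ``tail'' conditions ($v^{i+1}-v^i=\gamma$ for $i\ge N$, or $v^{k+1}=0$) can be arranged from the discrete data without losing energy, and that the per-crack lower bound genuinely decouples into two independent half-line problems. This is exactly the argument carried out in \cite{BCi,NNN} for the fully atomistic chain; since $\H_{1,n}^\ell$ differs from $H_{1,n}^\ell$ only through the $J_{CB}$ bonds in the continuum region $k_n^1+1\le i\le k_n^2-2$ — where, by Lemma~\ref{lemma:kngegen0} and Proposition~\ref{prop:compact}, the gradients are pinned near $\gamma$ and hence contribute no boundary-layer energy — the estimates transfer verbatim, and the theorem as stated is simply \cite[Theorem 4.2]{NNN}, reproduced here for the comparison in the next subsection.
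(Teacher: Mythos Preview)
The paper does not prove this theorem at all: it is simply quoted from \cite[Theorem~4.2]{NNN} as a reference point against which the quasicontinuum result (Theorem~\ref{theorem:fracture}) is later compared. So there is no ``paper's own proof'' to match, and you correctly identify this in your closing paragraph.

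That said, your write-up contains a persistent notational slip that you should fix: the theorem concerns the \emph{fully atomistic} functional $H_{1,n}^\ell$, yet throughout the liminf and limsup arguments you write $\H_{1,n}^\ell$, invoke the $\mu_n^i$ terms, speak of ``continuum $J_{CB}$ bonds'', and appeal to Lemma~\ref{lemma:kngegen0} and assumption~(\ref{ass:kn}) on $k_n^1,k_n^2$. None of these objects appear in $H_{1,n}^\ell$: the atomistic energy has only $\sigma_n^i(\ell)$ terms (the decomposition~(\ref{def:hnl1<}) with $k_n^1=k_n^2$, so to speak), and there is no atomistic/continuum interface to worry about. Your ``Main obstacle'' paragraph even argues that the proof transfers from $H_{1,n}^\ell$ to $\H_{1,n}^\ell$, which is the direction of Theorem~\ref{theorem:fracture}, not of the statement you were asked to prove. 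Once you strip out the QC-specific ingredients, the skeleton you describe --- localize around each jump, use indices where the discrete gradient is close to $\gamma$ to decouple the sum, bound each piece below by the appropriate boundary-layer infimum, and patch near-optimal profiles for the limsup --- is exactly the strategy of \cite{NNN}.
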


We aim for an analogous result for $\H_{1,n}^{\ell,k_n,\T_n}$. Here the specific structure of $\T=(\T_n)$ turns out to be important. We will show that every jump corresponds to the debonding of a pair of representative atoms and this induces the debonding of all atoms in between. Thus the distance between two neighbouring repatoms quantifies the jump energy. For given $k_n=(k_n^1,k_n^2)$, $x\in[0,1]$, we assume that $\T=(\T_n)$ is such that the following limit exists in $\N\cup\{+\infty\}$
\begin{equation}\label{def:B(x,T)}
 \begin{split}
b(x,\T):=&\lim_{n\to\infty}\min\big\{q_n^2-q_n^1\, :\, (q_n^1),(q_n^2)\subset\N,k_n^1<q_n^1<q_n^2<k_n^2,\\
 &\hspace*{2.0cm}~q_n^1,~q_n^2\in\T_n,\lim_{n\to\infty}\lambda_nq_n^1=\lim_{n\to\infty}\lambda_nq_n^2=x\big\}.
 \end{split}
\end{equation}
The choice of repatoms at the interface between the local and nonlocal region has to be treated with extra care and we assume that the following limits exist in $\N\cup\{+\infty\}$
\begin{equation}\label{def:rl}
\begin{split}
 \hat{r}(\T)&:=\lim_{n\to\infty}\l(r(\T_n)-k_n^1\r),\mbox{ with }r(\T_n):=\min\{r\in\T_n:k_n^1<r\}, \\
 \hat{l}(\T)&:=\lim_{n\to\infty}\l(k_n^2-l(\T_n)\r),\mbox{ with }l(\T_n):=\max\{l\in\T_n:k_n^2>l\}.
\end{split}
\end{equation} 
 Moreover, we define for $m\in\mathbb{N}$ the following minimum problem
\begin{equation}\label{fracture:bif}
\begin{split}
 B_{IF}(m)=&\inf_{k\in\mathbb{N}}\min\bigg\{\frac{1}{2}J_1(v^1-v^0)+\sum_{i=0}^{k-1}\bigg\{J_2\left(\frac{v^{i+2}-v^i}{2}\right)\\
  &\quad +\frac{1}{2}J_1(v^{i+2}-v^{i+1})+\frac{1}{2} J_1(v^{i+1}-v^i)-J_0(\gamma)\bigg\}\\
  &\quad+\frac{2m+1}{2}\left(J_{CB}(v^{k+1}-v^k)-J_0(\gamma)\right):v:\mathbb{N}\rightarrow\mathbb{R},v^{0}=0\bigg\},
\end{split}
\end{equation}
which corresponds to a jump in the atomistic region at the atomistic/continuum interface, where $m$ corresponds to the distance between the neighbouring repatoms at the interface, specified below. Furthermore, we set $B_{IF}(\infty)=B(\gamma)$.  
\begin{lemma}\label{lemma:repcomp}
Let $J_1,J_2$ be potentials such that [LJ1]--[LJ4] hold. Let $\T_n=\{t_n^0,t_n^1,...,t_n^{r_n}\}$ with $0=t_n^0<t_n^1<...<t_n^{r_n}=n$ for all $n\in\N$. Let $(u_n)$ be a sequence of functions satisfying (\ref{eq:prop}). Furthermore, let $(h_n)\subset\N$ be such that $k_n^1\leq t_n^{h_n}<t_n^{h_n+1}\leq k_n^2$ and $\liminf_{n\to\infty}\l(t_n^{h_n+1}-t_n^{h_n}\r)=+\infty$. Then, we have 
\[\lim_{n\to\infty}\l(\frac{u_n^{t_n^{h_n}+1}-u_n^{t_n^{h_n}}}{\lambda_n}\r)=\gamma.\]
\end{lemma}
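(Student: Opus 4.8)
The plan is to exploit that $u_n$ is affine on the whole stretch between the two consecutive representative atoms $t_n^{h_n}$ and $t_n^{h_n+1}$, so that its rescaled discrete gradient equals a single constant $z_n:=\frac{u_n^{t_n^{h_n}+1}-u_n^{t_n^{h_n}}}{\lambda_n}$ there, and then to combine the non-negative rearrangement (\ref{def:hnl1<}) of the energy with Lemma~\ref{lemma:F} to force $z_n\to\gamma$. Throughout, write $\mu_n(z):=J_{CB}(z)-J_0^{**}(\ell)-(J_0^{**})'(\ell)(z-\ell)$, i.e.\ the quantity in (\ref{def:mul}) with argument $z$.

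First I would record that, since $u_n\in\A_{\T_n}(0,1)$ is affine on $(t_n^{h_n},t_n^{h_n+1})\lambda_n$, we have $\frac{u_n^{i+1}-u_n^i}{\lambda_n}=z_n$ and hence $\mu_n^i(\ell)=\mu_n(z_n)$ for every $i\in\{t_n^{h_n},\dots,t_n^{h_n+1}-1\}$. Because $k_n^1\le t_n^{h_n}<t_n^{h_n+1}\le k_n^2$, this whole block of indices lies inside $\{k_n^1,\dots,k_n^2-1\}$, which is precisely the range over which the $\mu_n^i(\ell)$--terms appear in (\ref{def:hnl1<}), each with weight at least $\tfrac12$. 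Since every term in (\ref{def:hnl1<}) other than the fixed boundary contributions $\tfrac12J_1(u_0^{(1)})$, $\tfrac12J_1(u_1^{(1)})$, $-J_0^{**}(\ell)$ and $-(J_0^{**})'(\ell)\bigl(\tfrac{u_0^{(1)}+u_1^{(1)}}{2}-\ell\bigr)$ is non-negative, assumption (\ref{eq:prop}) yields a constant $C>0$ with $C\ge\tfrac12\bigl(t_n^{h_n+1}-t_n^{h_n}\bigr)\mu_n(z_n)$, and therefore $\mu_n(z_n)\to0$, using the hypothesis $\liminf_{n\to\infty}\bigl(t_n^{h_n+1}-t_n^{h_n}\bigr)=+\infty$.

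It then remains to pass from $\mu_n(z_n)\to0$ to $z_n\to\gamma$. Since $\ell>\gamma$ in this subsection, $\min\{\ell,\gamma\}=\gamma$ and, by (\ref{j0**}), $(J_0^{**})'(\ell)=0$ and $J_0^{**}(\ell)=J_0(\gamma)$. I would argue by contradiction: if $z_n\not\to\gamma$ there are $\eps>0$ and a subsequence with $|z_n-\gamma|\ge\eps$. For such $z_n$ the value $a=z_n$ is admissible in the infimum defining $F$ in (\ref{ineq:F}) (note $2z_n-a=z_n$), so $F(z_n)\le\tfrac12\bigl(J_1(z_n)+J_1(z_n)\bigr)+J_2(z_n)-(J_0^{**})'(\ell)(z_n-\ell)-J_0^{**}(\ell)=\mu_n(z_n)$; on the other hand Lemma~\ref{lemma:F} gives $\eta(\eps)>0$ with $F(z_n)\ge\eta(\eps)$, so $\mu_n(z_n)\ge\eta(\eps)>0$ along the subsequence, contradicting the previous step. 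Hence $z_n\to\gamma$, as claimed.

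The points that are not purely routine are the bookkeeping in the first step — checking that the entire block $\{t_n^{h_n},\dots,t_n^{h_n+1}-1\}$ falls into the $J_{CB}$/$\mu_n^i$--part of (\ref{def:hnl1<}) and not into its $\sigma_n^i$--parts — together with the observation that affineness collapses the corresponding portion of $\sum_i\mu_n^i(\ell)$ into a multiple of the single number $\mu_n(z_n)$, which is exactly what makes an unboundedly long gap between two neighbouring repatoms force $\mu_n(z_n)\to0$. The second step is then an immediate application of Lemma~\ref{lemma:F} with the trial value $a=z$.
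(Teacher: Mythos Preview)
Your proof is correct and follows essentially the same approach as the paper: exploit affineness to collapse the block of $\mu_n^i$--terms into $(t_n^{h_n+1}-t_n^{h_n})\,\mu_n(z_n)$, use non-negativity in (\ref{def:hnl1<}) together with (\ref{eq:prop}) to force $\mu_n(z_n)\to 0$, and then conclude $z_n\to\gamma$. The only difference is cosmetic: for the last step the paper simply points back to ``similar steps as in Lemma~\ref{lemma:kngegen0}'', whereas you invoke Lemma~\ref{lemma:F} directly with the trial value $a=z_n$; since Lemma~\ref{lemma:F} is precisely the tool underlying that argument, the two are the same in substance.
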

\begin{proof}
From the equiboundedness of $\sup_n\H_{1,n}^{\ell,k_n,\T_n}(u_n)$, we deduce the existence of a constant $C>0$ such that
$$C\geq\sup_{n}\sum_{i=t_n^{h_n}}^{t_n^{h_n+1}-1}\mu_n^i=\sup_n(t_n^{h_n+1}-t_n^{h_n})\mu_n^{t_n^{h_n}},$$
where we used the fact that $u_n'(x)=\lambda_n^{-1}(t_n^{h_n+1}-t_n^{h_n})^{-1}(u_n^{t_n^{h_n+1}}-u_n^{t_n^{h_n}})$ for all $x\in\lambda_n(t_n^{h_n},t_n^{h_n+1})$. This implies $\mu_n^{t^{h_n}}=\mc{O}((t_n^{h_n+1}-t_n^{h_n})^{-1})$ and thus  $\mu_n^{t^{h_n}}\to0$ as $n\to\infty$. Similar steps as in Lemma~\ref{lemma:kngegen0} now lead to 
\[\lim_{n\to\infty}\l(\frac{u_n^{t_n^{h_n}+1}-u_n^{t_n^{h_n}}}{\lambda_n}\r)=\gamma.\]
\end{proof}

Next, we will state the main theorem of this section concerning the $\Gamma$-limit of the functionals $\H_{1,n}^{\ell,k_n,\T_n}$ for $\ell>\gamma$. The $\Gamma$-limit is different to the one obtained for $H_1^\ell$ in \cite{NNN}, cf.\ Theorem~\ref{th:fracturennn}. We will come back to this in section 5.
\begin{theorem}\label{theorem:fracture}
 Suppose that hypotheses [LJ1]--[LJ4] hold. Let $\ell>\gamma$ and $u_0^{(1)}$, $u_1^{(1)}>0$. Let $(k_n^1),(k_n^2)$ satisfy (\ref{ass:kn}) and let $\T=(\T_n)$ satisfy (\ref{T_n}) such that
\begin{equation}
\{0,...,k_n^1\}\cup\{k_n^2,...,n\}\subset \T_n=\{t_n^0,....,t_n^{r_n}\}\label{def:atoms}
\end{equation}
and the limits defined in (\ref{def:B(x,T)}) and (\ref{def:rl}) exist in $\N\cup\{+\infty\}$. Then $\H_{1,n}^{\ell,k_n,\T_n}$ defined in (\ref{def:hn1qc}) $\Gamma$-converges with respect to the $L^1(0,1)$--topology to the functional $\H_1^{\ell,\T}$ defined by
\begin{align}\label{def:limitfracqc}
 \H_1^{\ell,\T}(u)=&B\l(u_0^{(1)},\gamma\r)(1-\#(S_u\cap\{0\}))+B\l(u_1^{(1)},\gamma\r)(1-\#(S_u\cap\{1\}))\notag\\
  &+B_{IFJ}\l(\hat{r}(\T),b(0,\T),u_0^{(1)}\r)\#\l(S_u\cap\{0\}\r)-\sum_{x:x\in S_u\cap(0,1)}b(x,\T)J_0(\gamma)\notag\\
  &+B_{IFJ}\l(\hat{l}(\T),b(1,\T),u_1^{(1)}\r)\#\l(S_u\cap\{1\}\r)-J_0(\gamma)
\end{align}
if $u\in SBV_c^\ell(0,1)$, and $+\infty$ else on $L^1(0,1)$, where $B_{IFJ}(n,k,\theta)$ is defined for $n,k\in\N\cup\{+\infty\}$, $\theta>0$ as  
\begin{align}\label{def:bifjxint}
 B_{IFJ}(n,k,\theta)=&\min\bigg\{\min\left\{B_{AIF}(n),B(\gamma)-\l(\frac12+n\r)J_0(\gamma),-kJ_0(\gamma)\right\}+B(\theta,\gamma),\notag\\
&\hspace*{1.2cm}B_{BJ}(\theta)\bigg\}
\end{align}
with
\begin{equation}\label{def:baif}
 B_{AIF}(n)=B_{IF}(n-1)+B(\gamma)-2J_0(\gamma),
\end{equation}
where $B_{BJ}$ and $B_{IF}$ are given in (\ref{eq:fractureBJ}) and (\ref{fracture:bif}).
\end{theorem}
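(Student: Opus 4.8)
The plan is to follow the now-standard two-part $\Gamma$-convergence scheme (liminf and limsup/recovery sequence), building on the rearranged energy representation (\ref{def:hnl1<}), on the compactness result Proposition~\ref{prop:compact}, and on Lemmas~\ref{lemma:kngegen0} and \ref{lemma:repcomp}. Since the functional $\H_1^{\ell,\T}$ is additive over the finitely many jump points of $u\in SBV_c^\ell(0,1)$ (with $u'\equiv\gamma$ a.e.\ by Proposition~\ref{prop:compact}), and every jump of the limit corresponds by Lemma~\ref{lemma:repcomp} to the debonding of a neighbouring pair of repatoms (which forces all intermediate atoms to debond), the strategy is to localize the energy around each crack and around the two boundary points and to identify the limiting contribution of each localized piece with the corresponding term: $B(u_0^{(1)},\gamma)$ resp.\ $B(u_1^{(1)},\gamma)$ for an elastic boundary, $-b(x,\T)J_0(\gamma)$ for an interior crack lying in the continuum region (the crack costs exactly the $J_{CB}-J_0(\gamma)$-deficit of the $b(x,\T)$ broken $J_{CB}$-bonds between the repatoms, and $J_{CB}(z)\to J_0(\gamma)$ along the recovery), and $B_{IFJ}(\hat r(\T),b(0,\T),u_0^{(1)})$ resp.\ $B_{IFJ}(\hat l(\T),b(1,\T),u_1^{(1)})$ when the crack sits at a boundary point, which (through the outer minimum in (\ref{def:bifjxint})) forces the optimizer to decide whether it is energetically cheaper to break at the interface between local and nonlocal regions (the $B_{AIF}$, $B(\gamma)-(\tfrac12+n)J_0(\gamma)$ and $-kJ_0(\gamma)$ alternatives, plus an elastic boundary layer $B(\theta,\gamma)$) or directly at the boundary atom ($B_{BJ}(\theta)$).

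For the liminf inequality I would proceed as follows. Given $u_n\to u$ in $L^1$ with equibounded energy, Proposition~\ref{prop:compact} gives $u\in SBV_c^\ell(0,1)$ and $u_n'\to\gamma$ in measure; Lemma~\ref{lemma:kngegen0} produces indices $h_n^1<k_n^1-2$, $h_n^2>k_n^2+2$ in the nonlocal regions where the discrete gradient tends to $\gamma$, and Lemma~\ref{lemma:repcomp} does the same across any "long" pair of consecutive repatoms inside the continuum region. Using the non-negativity of $\sigma_n^i(\ell)=\sigma_n^i(\gamma)$ and $\mu_n^i(\ell)=\mu_n^i(\gamma)$, I would cut the chain at these "good" indices, discard the positive cross-terms, and then bound each isolated block from below by the appropriate infimum problem, exactly as $B$, $B_b$, $B(\gamma)$, $B_{IF}$, $B_{AIF}$ are defined: for a boundary with no crack one recovers $B(u_0^{(1)},\gamma)$ via estimates analogous to \cite[(4.20),(4.23)]{NNN}; for an interior crack in the continuum region the block between the two repatoms contributes $\sum \mu_n^i(\gamma)=\sum (J_{CB}(\cdot)-J_0(\gamma))\geq$ roughly $b(x,\T)\cdot\bigl(J_{CB}(\text{large})-J_0(\gamma)\bigr)\to -b(x,\T)J_0(\gamma)$ after renormalizing against $J_0(\gamma)$; and for a crack at the boundary one keeps track of whether the fracture plane passes through the interface atoms $k_n^1$ (contributing the extra half-$J_{CB}$ terms $\tfrac12 J_{CB}$ appearing in (\ref{hnlqc})) or not, which is precisely what distinguishes the several arguments of the inner $\min$ in $B_{IFJ}$. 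Passing to the liminf and using lower semicontinuity of the jump part (as in the compactness proof) yields $\liminf_n \H_{1,n}^{\ell,k_n,\T_n}(u_n)\geq \H_1^{\ell,\T}(u)$.

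For the limsup inequality I would construct the recovery sequence crack by crack. Away from the jump set, set $u_n$ to have discrete gradient $\gamma$; near an interior crack at $x\in(0,1)$, pick the pair of repatoms $q_n^1<q_n^2$ realizing (up to a fixed error) the minimum in (\ref{def:B(x,T)}), concentrate the whole jump $[u](x)$ linearly between $q_n^1$ and $q_n^2$ so that the $q_n^2-q_n^1\to b(x,\T)$ intermediate $J_{CB}$-bonds are stretched to $+\infty$, contributing $b(x,\T)(J_{CB}(+\infty)-J_0(\gamma))=-b(x,\T)J_0(\gamma)$ in the limit; near a boundary point choose, according to which alternative attains the minimum in (\ref{def:bifjxint}), either a near-optimal profile $v$ for $B_b$/$B_{IF}$/$B_{AIF}$ glued to a near-optimal elastic layer $v$ for $B(\theta,\gamma)$, or a profile realizing $B_{BJ}(\theta)$, respecting the boundary conditions (\ref{def:boundarycond}) exactly as in the limsup part of Theorem~\ref{theorem:elastic} via a correction as in (\ref{bcrecovery}); and glue everything with linear interpolation on a mesoscopic scale so that the gluing bonds (at gradient $\approx\gamma$) contribute nothing in the limit. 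One must also check the admissibility constraint $u_n\in\A_{\T_n}(0,1)$, which is why the concentration of jumps must occur precisely between consecutive repatoms. The main obstacle I anticipate is the boundary-at-interface analysis: getting the bookkeeping of the half-weighted $J_{CB}$ terms at $i=k_n^1$ and $i=k_n^2-1$ right, correctly matching the three "interior-fracture-type" alternatives ($B_{AIF}(n)$, $B(\gamma)-(\tfrac12+n)J_0(\gamma)$, $-kJ_0(\gamma)$) to the geometric position of the fracture plane relative to $r(\T_n)$ and $k_n^1$, and verifying in the liminf that the optimal $n$, $k$ in $B_{IFJ}$ are indeed $\hat r(\T)$, $b(0,\T)$ — this requires carefully exploiting assumption (\ref{ass:kn})(i) together with Lemma~\ref{lemma:repcomp} to know where the "good" gradient-$\gamma$ indices are available, and is the only place where the whole interplay between $k_n$ and $\T_n$ is genuinely used.
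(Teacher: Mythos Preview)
Your proposal is correct and follows essentially the same route as the paper. The paper also reduces (via positivity of the jump energies, Remark~\ref{rem:theoremfracture}) to a single jump and then, for a jump at $0$, splits the liminf analysis into exactly the four cases you anticipate according to the position of the blowing-up index $h_n$ relative to $T_n^1$, $k_n^1$, and $r(\T_n)$: these yield respectively $B_{BJ}(u_0^{(1)})$, $B_{AIF}(\hat r(\T))+B(u_0^{(1)},\gamma)$, $B(\gamma)-(\tfrac12+\hat r(\T))J_0(\gamma)+B(u_0^{(1)},\gamma)$, and $-b(0,\T)J_0(\gamma)+B(u_0^{(1)},\gamma)$, and the recovery sequences are built case by case just as you describe. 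The only point where the paper is slightly more explicit than your sketch is that in Case~(2) one must rescale and shift $(u_n^{h_n+1+j})_j$ to obtain an admissible test function for $B_{IF}(\hat r(\T)-1)$, and in Cases~(3)--(4) one first uses Lemma~\ref{lemma:repcomp} to rule out $\hat r(\T)=+\infty$ resp.\ $b(0,\T)=+\infty$ before estimating; your ``main obstacle'' paragraph already flags precisely this bookkeeping.
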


\begin{remark}\label{rem:theoremfracture}
 In \cite{NNN} it is shown that $B_{BJ}(\theta)$ and $B_{IJ}$ are positive. The same holds true for $B_{IFJ}(n,k,\theta)$, see Lemma~\ref{lemma:bifj}. Hence all jump energies are positive.  
\end{remark}

\begin{proof} 
\textbf{\underline{Liminf inequality.}} Since the jump energies are positive (Remark~\ref{rem:theoremfracture}) we can assume without loss of generality that there is only one jump point. By symmetry, we only need to distinguish between a jump in $0$ and in $(0,1)$. 
  
\textbf{Jump in $0$.} Let $(u_n)$ be a sequence of functions converging to $u$ with $S_u=\{0\}$ such that $\sup_n \H_{1,n}^{\ell,k_n,\T_n} (u_n)<+\infty$. Then Proposition~\ref{prop:compact} implies that $u_n\to u$ in $L^1(0,1)$ with
\begin{equation}
 u(t)=\begin{cases}
       0 &\text{if } t=0,\\
       (\ell-\gamma)+\gamma t &\text{if } 0<t\leq1.
      \end{cases}\label{ucrackxi}
\end{equation}
By Lemma~\ref{lemma:kngegen0} there exist sequences $(T_n^1),(T_n^2)\subset\N$ with $0<T_n^1<k_n^1-1<k_n^2+1<T_n^2<n-2$ such that
\begin{equation}
 \lim_{n\to\infty}\frac{u_n^{T_n^1+2}-u_n^{T_n^1+1}}{\lambda_n}=\gamma,\quad\lim_{n\to\infty}\frac{u_n^{T_n^2+2}-u_n^{T_n^2+1}}{\lambda_n}=\gamma.\label{eq:fracT_n}
\end{equation}
We can write the energy in (\ref{def:hnl1<}) as
\begin{equation}
 \begin{split}
 \H_{1,n}^{\ell,k_n,\T_n}(u_n)=&\frac{1}{2}J_1\left(\frac{u_n^1-u_n^0}{\lambda_n}\right)+\sum_{i=0}^{T_n^1}\sigma_n^i+\sum_{i=T_n^1+1}^{k^1_n-1}\sigma_n^i+\frac{1}{2}\mu_n^{k_n^1}+\sum_{i=k_n^1+1}^{k^2_n-2}\mu_n^i\\
& +\frac12\mu_n^{k_n^2-1}+\sum_{i=k^2_n-1}^{T_n^2}\sigma_n^i+\sum_{i=T_n^2+1}^{n-2}\sigma_n^i+\frac12J_1\l(\frac{u_n^n-u_n^{n-1}}{\lambda_n}\r)-J_0(\gamma).\label{liminfdecomp}  
 \end{split}
\end{equation}
The estimate for the elastic boundary layer energy at $1$ is exactly the same as in the case $\ell\leq\gamma$, see (\ref{ineq:elasticinfright}), and is given by
\begin{equation}
 \liminf_{n\to\infty}\l(\sum_{i=T_n^2+1}^{n-2}\sigma_n^i+\frac12J_1\l(u_1^{(1)}\r)\r)\geq B(u_1^{(1)},\gamma).\label{frac:rightboundary}
\end{equation}
To estimate the remaining terms, we note that there exists $(h_n)\subset\N$ with $\lambda_nh_n\to0$ such that
\begin{equation}
 \lim_{n\to\infty}\frac{u_n^{h_n+1}-u_n^{h_n}}{\lambda_n}=+\infty,\label{lim_hninf}
\end{equation}
as argued in the proof of \cite[Theorem 4.2]{NNN}. Here we have to consider the following cases:
\begin{equation}
 (1)~ h_n\leq T_n^1,\quad (2)~T_n^1+1<h_n<k_n^1, \quad (3)~k_n^1\leq h_n<r(\T_n),\quad (4)~r(\T_n)\leq h_n.\label{cases}
\end{equation}
Indeed, it is enough to consider the above cases. By extracting a subsequence, we can assume that $\liminf_{n\to\infty} \H_{1,n}^{\ell,k_n,\T_n}(u_n)=\lim_{n\to\infty}\H_{1,n}^{\ell,k_n,\T_n}(u_n)$. Let $(h_n)$ be such that it oscillates between at least two of the cases (1)--(4), then we can extract a further subsequence which satisfies only one of the cases, which does not change the limit.\\
The first two cases correspond to a jump in the atomistic region. In the first case, the jump is sufficiently far from the atomistic/continuum interface and leads to the same jump energy as a jump in $0$ in the fully atomistic model. The jump in the second case is closer to the continuum region and leads to a jump energy of the form $B_{AIF}(n)$, see (\ref{def:baif}). In the third case, the jump is exactly at the interface between the atomistic region and the continuum region. The last case corresponds to a jump within the continuum region.

\textit{\underline{Case (1):}} Consider $(u_n)$ as above with $(h_n)$ satisfying (\ref{lim_hninf}) and (\ref{cases}, (1)). We show that
\begin{equation}
 \liminf_{n\to\infty} \H_{1,n}^{\ell,k_n,\T_n}(u_n)\geq B(u_1^{(1)},\gamma)+\frac12J_1(u_0^{(1)})+B_b(u_0^{(1)})+B(\gamma)-3J_0(\gamma).\label{ineq:infcrack0}
\end{equation}
This can be proven in the same way as the corresponding inequality for a jump in $0$ in \cite[Theorem 4.2]{NNN}. By (\ref{liminfdecomp}) and (\ref{frac:rightboundary}), we only need to estimate 
\begin{align*}
 \sum_{i=0}^{h_n-2}\sigma_n^i+\sigma_n^{h_n-1}+\sigma_n^{h_n}+\sum_{i=h_n+1}^{T^1_n}\sigma_n^i=&\frac{1}{2}J_1\left(\frac{u_n^{h_n}-u_n^{h_n-1}}{\lambda_n}\right)+\sum_{i=0}^{h_n-2}\sigma_n^i+\sum_{i=h_n+1}^{T^1_n}\sigma_n^i\\
&+\frac{1}{2}J_1\left(\frac{u_n^{h_n+2}-u_n^{h_n+1}}{\lambda_n}\right)-2J_0(\gamma)+\omega(n),
\end{align*}
with
$$
 \omega(n)=J_2\left(\frac{u_n^{h_n+1}-u_n^{h_n-1}}{2\lambda_n}\right)+J_1\left(\frac{u_n^{h_n+1}-u_n^{h_n}}{\lambda_n}\right)+J_2\left(\frac{u_n^{h_n+2}-u_n^{h_n}}{2\lambda_n}\right),
$$
which converges to $0$ as $n\to\infty$, since $J_1(\infty)=J_2(\infty)=0$. As shown in \cite[(4.39)]{NNN} and \cite[(4.40)]{NNN} it holds
\begin{align}
 \sum_{i=0}^{h_n-2}\sigma_n^i+\frac12J_1\l(\frac{u_n^{h_n}-u_n^{h_n-1}}{\lambda_n}\r)\geq& B_b(u_0^{(1)}),\label{ineq:boundcrack}\\
 \frac{1}{2}J_1\left(\frac{u_n^{h_n+2}-u_n^{h_n+1}}{\lambda_n}\right)+\sum_{i=h_n+1}^{T^1_n}\sigma_n^i\geq& B(\gamma)+r_2(n),\label{ineq:infcracka}
\end{align}
with $\lim_{n\to\infty}r_2(n)=0$. By using (\ref{frac:rightboundary}), (\ref{ineq:boundcrack}), (\ref{ineq:infcracka}) and the fact that $\sigma_n^i,\mu_n^i\geq0$, we obtain (\ref{ineq:infcrack0}).

\textit{\underline{Case (2):}} Assume that $(u_n)$ satisfies (\ref{lim_hninf}) with $(h_n)$ such that (\ref{cases}, (2)) holds true. We show that  
\begin{equation}
 \liminf_{n\to\infty} \H_{1,n}^{\ell,k_n,\T_n}(u_n)\geq B(u_0^{(1)},\gamma)+B(u_1^{(1)},\gamma)+B(\gamma)+B_{IF}(\hat{r}(\T)-1)-3J_0(\gamma).\label{ineq:infcrack1}
\end{equation}
First of all we estimate the elastic boundary layer energy at $0$ as in the case $\ell\leq\gamma$, see (\ref{ineq:elasticinfleft}), and obtain
\begin{equation}
 \liminf_{n\to\infty}\l(\frac12J_1\l(u_0^{(1)}\r)+\sum_{i=0}^{T_n^1}\sigma_n^i\r)\geq B(u_0^{(1)},\gamma).\label{frac:leftboundary}
\end{equation}
It remains to estimate 
\begin{align*}
 &\sum_{i=T_n^1+1}^{h_n-2}\sigma_n^i+\sigma_n^{h_n-1}+\sigma_n^{h_n}+\sum_{i=h_n+1}^{k^1_n-1}\sigma_n^i+\frac{1}{2}\mu_n^{k^1_n}+\sum_{i=k_n^1+1}^{k^2_n-2}\mu_n^i\\
 &\qquad =\frac{1}{2}J_1\left(\frac{u_n^{h_n}-u_n^{h_n-1}}{\lambda_n}\right)+\sum_{i=T_n^1+1}^{h_n-2}\sigma_n^i+\frac{1}{2}J_1\left(\frac{u_n^{h_n+2}-u_n^{h_n+1}}{\lambda_n}\right)+\sum_{i=h_n+1}^{k^1_n-1}\sigma_n^i\\
 &\qquad\quad-2J_0(\gamma)+\frac{1}{2}\mu_n^{k^1_n}+\sum_{i=k^1_n+1}^{k^2_n-2}\mu_n^i+\omega(n),
\end{align*}
with
$$
 \omega(n)=J_2\left(\frac{u_n^{h_n+1}-u_n^{h_n-1}}{2\lambda_n}\right)+J_1\left(\frac{u_n^{h_n+1}-u_n^{h_n}}{\lambda_n}\right)+J_2\left(\frac{u_n^{h_n+2}-u_n^{h_n}}{2\lambda_n}\right),
$$
which converges to $0$ as $n\to\infty$, since $J_1(\infty)=J_2(\infty)=0$. As in \cite[(4.48)]{NNN} we obtain
\begin{equation}
\frac{1}{2}J_1\left(\frac{u_n^{h_n}-u_n^{h_n-1}}{\lambda_n}\right)+\sum_{i=T_n^1+1}^{h_n-2}\sigma_n^i \geq B(\gamma)+r_1(n),\label{ineq:intcrack1}
\end{equation}
with $r_1(n)\to 0$ as $n\to\infty$. Next we show for $\hat{r}(\T)<\infty$ that
\begin{equation}
\liminf_{n\to\infty}\left\{\frac{1}{2}J_1\left(\frac{u_n^{h_n+2}-u_n^{h_n+1}}{\lambda_n}\right)+\sum_{i=h_n+1}^{k^1_n-1}\sigma_n^i+\frac{1}{2}\mu_n^{k^1_n}+\sum_{i=k_n^1+1}^{r(\T_n)-1}\mu_n^i\right\}\geq B_{IF}(\hat{r}(\T)-1).\label{ineq:intcrack2}
\end{equation}
To this end we define for $j=0,...,r(\T_n)-h_n$ 
\[\hat{u}_n^j=\frac{u_n^{h_n+1+j}-u_n^{h_n+1}}{\lambda_n}.\]
By definition of $\hat{r}(\T)$, see (\ref{def:rl}), there exists an $N\in\mathbb{N}$ such that $r(\T_n)-k_n^1 = \hat{r}(\T)$ for all $n\geq N$. From $u_n\in\A_{\T_n}(0,1)$ and (\ref{def:atoms}) we easily deduce $\mu_n^i=\mu_n^{k_n^1}$ for $k_n^1\leq i\leq r(\T_n)-1$. Hence
\begin{align*}
 &\frac{1}{2}J_1\left(\frac{u_n^{h_n+2}-u_n^{h_n+1}}{\lambda_n}\right)+\sum_{i=h_n+1}^{k^1_n-1}\sigma_n^i+\frac{1}{2}\mu_n^{k^1_n}+\sum_{i=k_n^1+1}^{r(\T_n)-1}\mu_n^i\\
 &\quad\geq\frac{1}{2}J_1(\hat{u}_n^1-\hat{u}_n^0)+\sum_{j=0}^{k^1_n-h_n-2}\bigg\{J_2\left(\frac{\hat{u}_n^{j+2}-\hat{u}_n^j}{2}\right)-J_0(\gamma)+\frac{1}{2}(J_1(\hat{u}_n^{j+2}-\hat{u}_n^{j+1})\\
 &\qquad+J_1(\hat{u}_n^{j+1}-\hat{u}_n^{j}))\bigg\}+\l(\frac{1}{2}+\hat{r}(\T)-1\r)\left(J_{CB}(\hat{u}_n^{k^1_n-h_n}-\hat{u}_n^{k^1_n-h_n-1})-J_0(\gamma)\right).
\end{align*}
Since $\hat{u}_n^0=0$, this is an admissible test for $ B_{IF}(\hat{r}(\T)-1)$ and (\ref{ineq:intcrack2}) holds true.\\
In case of $\hat{r}(\T)=\infty$, we deduce from Lemma~\ref{lemma:repcomp} that $\frac{u_n^{k_n^1+1}-u_n^{k_n^1}}{\lambda_n}\to\gamma$ as $n\to\infty$. Thus, we obtain as in (\ref{ineq:infcracka}) 
\begin{equation}
\frac{1}{2}J_1\left(\frac{u_n^{h_n+2}-u_n^{h_n+1}}{\lambda_n}\right)+\sum_{i=h_n+1}^{k_n^1-1}\sigma_n^i \geq B(\gamma)+r_1(n)=B_{IF}(\infty)+r_1(n),\label{ineq:intcrack3}
\end{equation}
with $r_1(n)\to0$ as $n\to+\infty$. By using (\ref{frac:rightboundary}), (\ref{frac:leftboundary})--(\ref{ineq:intcrack3}) and the fact that $\sigma_n^i,\mu_n^i\geq0$, we obtain (\ref{ineq:infcrack1}).

\textit{\underline{Case (3):}} Let $(u_n)$ satisfy (\ref{lim_hninf}) with $(h_n)$ such that (\ref{cases}) (3) holds true. We show  
\begin{equation}
 \liminf_{n\to\infty}\H_{1,n}^{\ell,k_n,\T_n}(u_n)\geq B(u_0^{(1)},\gamma)+B(u_1^{(1)},\gamma)+B(\gamma)-\l(\frac12+\hat{r}(\T)\r)J_0(\gamma).\label{ineq:infcrack2}
\end{equation}
Let $\hat r(\T)=+\infty$. By Lemma~\ref{lemma:repcomp}, we deduce $\lim_{n\to\infty}\frac{1}{\lambda_n}\l(u_n^{k_n^1+1}-u_n^{k_n^1}\r)=\gamma$ which is a contradiction to the existence of $(h_n)$ satisfying (\ref{lim_hninf}) and (\ref{cases}) (3). Hence, we can assume $\hat r(\T)<+\infty$. Next we estimate
\begin{align*}
 &\sum_{i=T_n^1+1}^{k_n^1-2}\sigma_n^i+\sigma_n^{k_n^1-1}+\frac{1}{2}\mu_n^{k^1_n}+\sum_{i=k_n^1+1}^{r(\T_n)-1}\mu_n^i+\sum_{i=r(\T_n)}^{k^2_n-2}\mu_n^i \\
 &=\frac{1}{2}J_1\left(\frac{u_n^{k_n^1}-u_n^{k_n^1-1}}{\lambda_n}\right)+\sum_{i=T_n^1+1}^{k_n^1-2}\sigma_n^i-\frac32J_0(\gamma)-(r(\T_n)-k_n^1-1)J_0(\gamma)\\
&\quad+\sum_{i=r(\T_n)}^{k^2_n-2}\mu_n^i+\omega(n),
\end{align*}
where
\begin{align*}
 \omega(n)=&\frac12J_1\l(\frac{u_n^{k_n^1+1}-u_n^{k_n^1}}{\lambda_n}\r)+J_2\l(\frac{u_n^{k_n^1+1}-u_n^{k_n^1-1}}{2\lambda_n}\r)\\
&+\l(r(\T_n)-k_n^1-\frac12\r)J_{CB}\l(\frac{u_n^{k_n^1+1}-u_n^{k_n^1}}{\lambda_n}\r)
\end{align*}
which converges to zero as $n$ tends to $+\infty$. Moreover, we obtain by \cite[(4.48)]{NNN} 
\begin{equation}
 \frac{1}{2}J_1\left(\frac{u_n^{k_n^1}-u_n^{k_n^1-1}}{\lambda_n}\right)+\sum_{i=T_n^1+1}^{k_n^1-2}\sigma_n^i\geq B(\gamma)+r(n),\label{ineq:intcrack4}
\end{equation}
with $\lim_{n\to\infty}r(n)=0$. Combining   (\ref{def:rl}), (\ref{frac:rightboundary}), (\ref{frac:leftboundary}), (\ref{ineq:intcrack4}) and the fact that $\mu_n^i\geq0$, we prove assertion (\ref{ineq:infcrack2}).

\textit{\underline{Case (4):}} Finally, let $(u_n)$ satisfy (\ref{lim_hninf}) with $(h_n)$ such that (\ref{cases}) (4) holds. We show  
\begin{equation}
 \liminf_{n\to\infty}\H_{1,n}^{\ell,k_n,\T_n}(u_n)\geq B(u_0^{(1)},\gamma)+B(u_1^{(1)},\gamma)-\l(b(0,\T)+1\r)J_0(\gamma).\label{ineq:infcrack3}
\end{equation}
With a similar argument as in case (3), we deduce from Lemma~\ref{lemma:repcomp} that $b(0,\T)$ has to be finite. There exists $(q_n)$ such that $t_n^{q_n}\leq h_n < t_n^{q_n+1}$ where $t_n^{q_n},~t_n^{q_n+1}\in\T_n$. For $u_n\in\A_{\T_n}(0,1)$, we have $\mu_n^i=\mu_n^{h_n}$ for $t_n^{q_n}\leq i\leq t_n^{q_n+1}-1$. By using $\mu_n^i,\sigma_n^i\geq0$, we obtain
$$
 C\geq\sum_{i=T_n^1+1}^{k_n^1-1}\sigma_n^i+\frac{1}{2}\mu_n^{k^1_n}+\sum_{i=k_n^1+1}^{t_n^{q_n}-1}\mu_n^i+\sum_{i=t_n^{q_n}}^{t_n^{q_n+1}-1}\mu_n^i+\sum_{i=t_n^{q_n+1}}^{k^2_n-2}\mu_n^i \geq(t_n^{q_n+1}-t_n^{q_n})\mu_n^{h_n}.
$$
Since $\mu_n^{h_n}\geq0$, $\lim_{n\to\infty}\mu_n^{h_n}=-J_0(\gamma)$ and since there exists, using (\ref{T_n}), a constant $N\in\N$ such that $(t_n^{q_n+1}-t_n^{q_n})\geq b(0,\T)$ for all $n\geq N$, we get 
$$
 \liminf_{n\to\infty}(t_n^{q_n+1}-t_n^{q_n})\mu_n^{h_n}\geq b(0,\T)\liminf_{n\to\infty}\mu_n^{h_n}=-b(0,\T)J_0(\gamma),
$$
which proves together with (\ref{frac:rightboundary}) and (\ref{frac:leftboundary}) inequality (\ref{ineq:infcrack3}).\\

In summary, for the jump in $0$, we have the estimate
\begin{align*}
\liminf_{n\to\infty} \H_{1,n}^{\ell,k_n,\T_n}(u_n)\geq& B(u_1^{(1)},\gamma)-J_0(\gamma)+\min\bigg\{\min\bigg\{B_{AIF}(\hat{r}(\T)),\\
&B(\gamma)-\l(\frac12+\hat{r}(\T)\r)J_0(\gamma),-b(0,\T)J_0(\gamma)\bigg\}+B(u_0^{(1)},\gamma),\\
&B_{BJ}(u_0^{(1)})\bigg\},
\end{align*}
which meets (\ref{def:limitfracqc}).

\textbf{Jump in $(0,1)$.} Assume that $S_u=\{x\}$, with $x\in(0,1)$. Let $(u_n)$ be a sequence converging to $u$ such that $\sup_n\H_{1,n}^{\ell,k_n,\T_n}(u_n)<\infty$. Then Proposition~\ref{prop:compact} implies that $u_n\to u$ in $L^1(0,1)$ with
\begin{align}
 u(t)=\begin{cases}
       \gamma t &\text{if } 0\leq t< x,\\
       (\ell-\gamma)+\gamma t &\text{if } x<t\leq1.
      \end{cases}\label{ucrackc}
\end{align}
Combining (\ref{frac:leftboundary}), (\ref{frac:rightboundary}) and the arguments of case (4) above, we can prove
\begin{equation}
 \liminf_{n\to\infty} \H_{1,n}^{\ell,k_n,\T_n}(u_n)\geq B(u_0^{(1)},\gamma)+B(u_1^{(1)},\gamma)-b(x,\T)J_0(\gamma)-J_0(\gamma),\label{eq:liminfcontinuous}\\
\end{equation}
which is the asserted estimate.\\

\textbf{\underline{Limsup inequality.}} As for the lower bound it is sufficient to consider a single jump either in $0$ or in $(0,1)$.

\textbf{Jump in $0$.} Corresponding to the cases (1)--(4), see (\ref{cases}), we construct sequences $(u_n^{(i)})$ with $u_n^{(i)}\to u$ for $i=1,...,4$, where $u$ is given by (\ref{ucrackxi}) such that
\begin{align}
(1)~\lim_n \H_{1,n}^{\ell,k_n,\T_n}(u_n^{(1)})\leq& B(u_1^{(1)},\gamma)+B(\gamma)+B_{b}(u_0^{(1)})+\frac12J_1(u_0^{(1)})-3J_0(\gamma),\label{eq:limsupcrackxi<<}\\
(2)~\lim_n \H_{1,n}^{\ell,k_n,\T_n}(u_n^{(2)})\leq& B(u_0^{(1)},\gamma)+B(u_1^{(1)},\gamma)+B_{AIF}(\hat{r}(\T)-1)-J_0(\gamma),\label{eq:limsupcrackxi<}\\
(3)~\lim_n \H_{1,n}^{\ell,k_n,\T_n}(u_n^{(3)})\leq& B(u_0^{(1)},\gamma)+B(u_1^{(1)},\gamma)+B(\gamma)-\l(\frac32 + \hat{r}(\T)\r)J_0(\gamma),\label{eq:limsupcrackxi=}\\
(4)~\lim_n \H_{1,n}^{\ell,k_n,\T_n}(u_n^{(4)})\leq& B(u_0^{(1)},\gamma)+B(u_1^{(1)},\gamma)-b(0,\T)J_0(\gamma)-J_0(\gamma).\label{eq:limsupcrackxi>}
\end{align}
To show these inequalities, we recall some definitions of sequences from \cite{NNN}. For a fixed $\eta>0$, we can find by definition (\ref{fracture:b}) of $B(\gamma)$, a function $\tilde{u}:\mathbb{N}\to\mathbb{R}$ and $\tilde{N}\in\mathbb{N}$ such that $\tilde{u}^0=0,\tilde{u}^{i+1}-\tilde{u}^i=\gamma$ if $i\geq\tilde{N}$ and
\begin{equation}\label{ineq:supfreeboundary}
\begin{split}
 &\frac{1}{2}J_1(\tilde{u}^1-\tilde{u}^0)+\sum_{i\geq0}\bigg\{J_2\left(\frac{\tilde{u}^{i+2}-\tilde{u}^i}{2}\right)+\frac{1}{2}J_1(\tilde{u}^{i+2}-\tilde{u}^{i+1})\\
 &\qquad +\frac{1}{2} J_1(\tilde{u}^{i+1}-\tilde{u}^i)-J_0(\gamma)\bigg\} \leq B(\gamma)+\eta.\end{split}
\end{equation}
In order to recover the elastic boundary layers at $0$ and $1$, we use the same sequences as in the case $\ell\leq\gamma$, cf. Theorem~\ref{theorem:elastic}. Let $v:\mathbb{N}\to\mathbb{R}$ and $N_1\in\mathbb{N}$ with $v^0=0,v^1=u_0^{(1)},v^{i+1}-v^i=\gamma$ if $i\geq N_1$ be such that (\ref{ineq:supleftboundary}) is satisfied and $w:-\mathbb{N}\to\mathbb{R}$ and $N_2\in\mathbb{N}$ with $w^0=0,w^0-w^{-1}=u_1^{(1)},w^i-w^{i-1}=\gamma$ if $i\leq -N_2$, such that (\ref{ineq:suprightboundary}) is satisfied.

\textit{\underline{Case (1):}} We construct a sequence $(u_n)$ converging in $L^1(0,1)$ to $u$, given in (\ref{ucrackxi}), satisfying (\ref{eq:limsupcrackxi<<}).
For this, we can use the same recovery sequence which is constructed for a jump in $0$ in \cite[Theorem 4.2]{NNN}. Let $\eta>0$. By definition (\ref{fracture:bb}) of $B_b(\theta)$, there exist $\hat{w}:-\N\to\R$ and $\hat{k}_0\in\N$ such that $\hat{w}^{-\hat{k}_0-1}=0, \hat{w}^{-\hat{k}_0}=u_0^{(1)}$ and
\begin{equation}\label{ineq:supfracboundary}
\begin{split}
 & \frac12J_1(\hat{w}^0-\hat{w}^{-1})+\sum_{i=\hat{k}_0+1}^0\bigg\{J_2\l(\frac{\hat{w}^i-\hat{w}^{i-2}}{2}\r)+\frac12J_1(\hat{w}^i-\hat{w}^{i-1})\\
 &\qquad +\frac12J_1(\hat{w}^{i-1}-\hat{w}^{i-2})-J_0(\gamma)\bigg\}\leq B_b(u_0^{(1)})+\eta.
\end{split}
\end{equation}
The recovery sequence $(u_n)$, which is given in \cite[Theorem 4.2]{NNN}, is defined means of the sequences $\tilde u$, $\hat w$ and $w$, as
\[u_n^i=\begin{cases}
   \lambda_n\hat{w}^{i-\hat{k}_0-1}&\mbox{if $0\leq i\leq \hat{k}_0+1$,}\\
   \ell+\lambda_n(w^{k_n^2+1-n}+\tilde{u}^{i-(\hat{k}_0+2)}-\tilde{u}^{k_n^2+1-(\hat{k}_0+2)})&\mbox{if $\hat{k}_0+2\leq i\leq k_n^2+1$},\\
   \ell+\lambda_nw^{i-n}&\mbox{if $k_n^2+1\leq i \leq n.$}
  \end{cases}
\]
Since $k_n^2$ is such that $\lim_{n\to\infty}k_n^2=\lim_{n\to\infty}(n-k_n^2)=+\infty$ we have for $n$ large enough
$$ k_n^1-(\hat k_0 +2)>\tilde N \quad\mbox{and}\quad k_n^2-n+2\leq -N_2. $$
In the proof of \cite[Theorem 4.2]{NNN} it is shown that $\lim_{n\to\infty}u_n=u$ in $L^1(0,1)$ and, by using the above inequalities, we can argue as in \cite{NNN} to show
\[\lim_n\H_{1,n}^{\ell,k_n,\T_n}(u_n)\leq \frac12J_1(u_0^{(1)})+B_b(u_0^{(1)})+B(\gamma)+B(u_1^{(1)},\gamma)-3J_0(\gamma)+3\eta.\]
The thesis follows from the arbitrariness of $\eta>0$.  

\textit{\underline{Case (2):}} Now we construct a sequence $(u_n)$ which converges in $L^1(0,1)$ to $u$, given in (\ref{ucrackxi}), and satisfies (\ref{eq:limsupcrackxi<}).\\
Let $\hat{r}(\T)<\infty$. For fixed $\eta>0$ we can find, by definition (\ref{fracture:bif}) of $B_{IF}(n)$, a function $z:\mathbb{N}\to\mathbb{R}$ and $q\in\mathbb{N}$ such that $z^0=0$ and 
\begin{equation}\label{ineq:tildeBsup}
 \begin{split}
  & \frac{1}{2}J_1(z^1-z^0)+\sum_{i=0}^{q-1}\bigg\{J_2\left(\frac{z^{i+2}-z^i}{2}\right)+\frac{1}{2}J_1(z^{i+2}-z^{i+1})+\frac{1}{2} J_1(z^{i+1}-z^i)-J_0(\gamma)\bigg\}\\
 &+\l(\frac{1}{2}+\hat{r}(\T)-1\r)\left(J_{CB}(z^{q+1}-z^q)-J_0(\gamma)\right)\leq  B_{IF}(\hat{r}(\T)-1)+\eta. 
 \end{split}
\end{equation}
Further, we extend $z$ such that $z^{i+1}-z^i=z^{q+1}-z^q$ for all $i\geq q$. Set $h_n:=k^1_n-q-1$, then we have $\lambda_n h_n\to0$. Moreover, let $(k_n^0)$ be a sequence of integers such that $\lambda_nk_n^0\to 0$ as $n\to\infty$ and
\begin{eqnarray*}
 k_n^0\geq N_1+1,\quad\tilde{N}\leq h_n-k_n^0-2,\quad n-k_n^2-1\geq N_2.		
\end{eqnarray*}
We are now able to construct a sequence $(u_n)$ by means of the functions $z,v,w$ and $\tilde{u}$, which is similar to the recovery sequence for an internal jump in \cite[p.~807]{NNN}
\begin{equation*}
 u_n^i=\begin{cases}
        \lambda_n v^i &\text{if } 0\leq i\leq k_n^0,\\
	\lambda_n(v^{k_n^0}-\tilde{u}^{h_n-i}+\tilde{u}^{h_n-k_n^0})&\text{if }k_n^0\leq i\leq h_n,\\
	\ell+\lambda_n(w^{k_n^1+\hat{r}(\T)-n}+z^{i-(h_n+1)}-z^{q+\hat{r}(\T)})&\text{if }h_n+1\leq i\leq r(\T_n),\\
	\ell+\lambda_n w^{i-n} &\text{if }r(\T_n)\leq i\leq n.
       \end{cases}
\end{equation*}
By definition of $v$ and $w$ the sequence $(u_n)$ satisfies the boundary conditions (\ref{def:boundarycond}). We have 
\[u_n^{k^1_n+1}-u_n^{k^1_n}=\lambda_n\l(z^{k_n^1-h_n}-z^{k_n^1-h_n-1}\r)=\lambda_n\l(z^{q+1}-z^q\r)\]
and by the definition of $z$ and $u_n$ this implies $u_n^{i+1}-u_n^i=z^{q+1}-z^q$ for $k_n^1\leq i<r(\T_n)$. Moreover, we have $u_n^{i+1}-u_n^i=\lambda_n\gamma$ for $N_1\leq i<h_n-\tilde{N}$ and $r(\T_n)\leq i < n-N_2$ which implies $u_n\in\A_{\T_n}(0,1)$. Since we have $k_n^1=h_n+q+1$, $r(\T_n)-k_n^1=\hat{r}(\T)$ and $k_n^2>k_n^1+\hat{r}(\T)$ for $n$ large enough, we obtain
\begin{equation*}
\begin{split}
 u_n^{h_n+1}-u_n^{h_n}=&\ell+\lambda_n\left(w^{k_n^1+\hat{r}(\T)-n}+z^0-z^{q+\hat{r}(\T)}-v^{k_n^0}+\tilde{u}^0-\tilde{u}^{h_n-k_n^0}\right)\\
=&\ell+\lambda_n\big(w^{k_n^1+\hat{r}(\T)-n}-w^{-N_2}+w^{-N_2}-z^{q+\hat{r}(\T)}-v^{k_n^0}+v^{N_1}-v^{N_1}\\
 &-\tilde{u}^{h_n-k_n^0}+\tilde{u}^{\tilde{N}}-\tilde{u}^{\tilde{N}}\big)\\
=&\ell+\lambda_n\big(\gamma(k_n^1+\hat{r}(\T)-n+N_2-k_n^0+N_1\\
 &-h_n+k_n^0+\tilde{N})+w^{-N_2}-z^{q+\hat{r}(\T)}-v^{N_1}-\tilde{u}^{\tilde{N}}\big)\\
=&\ell-n\gamma\lambda_n+\lambda_n\big(\gamma(q+1+\hat{r}(\T)+N_2+N_1+\tilde{N})\\
&+w^{-N_2}-z^{q+\hat{r}(\T)}-v^{N_1}-\tilde{u}^{\tilde{N}}\big).
\end{split}
\end{equation*}
Hence, we have
\begin{equation}
 u_n^{h_n+1}-u_n^{h_n}\to \ell-\gamma,\label{lim:lg2xi}
\end{equation}
and $u_n\to u$ in $L^1(0,1)$. From (\ref{lim:lg2xi}) we have $\frac{u_n^{h_n+1}-u_n^{h_n}}{\lambda_n}\to+\infty$ as $n\to\infty$ and thus
\begin{equation}\label{limsup:case1sigma}
\begin{split}
 \sigma_n^{h_n}&=\frac12J_1(z^1-z^0)-J_0(\gamma)+r_1(n),\\
 \sigma_n^{h_n-1}&=\frac12J_1(\tilde{u}^1-\tilde{u}^0)-J_0(\gamma)+r_2(n),
\end{split}
\end{equation}
with $r_1(n),r_2(n)\to0$ as $n\to\infty$. To compute $\H_{1,n}^{\ell,k_n,\T_n}(u_n)$, it is useful to write (\ref{def:hnl1<}) as follows
 \begin{equation*}
  \begin{split}
 \H_{1,n}^{\ell,k_n,\T_n}(u_n)=&\frac12J_1\l(u_0^{(1)}\r)+\sum_{i=0}^{k_n^0-2}\sigma_n^i+\sigma_n^{k_n^0-1}+\sum_{i=k_n^0}^{h_n-2}\sigma_n^i+\sigma_n^{h_n-1}+\sigma_n^{h_n}+\sum_{i=h_n+1}^{k_n^1-1}\sigma_n^i\\
 &+\frac12\mu_n^{k_n^1}+\sum_{i=k_n^1+1}^{k_n^2-2}\mu_n^i+\frac12\mu_n^{k_n^2-1}+\sum_{i=k_n^2-1}^{n-2}\sigma_n^i+\frac12J_1\l(u_1^{(1)}\r)-J_0(\gamma).
 \end{split}
 \end{equation*}
As in \cite[(4.69)]{NNN} we obtain $\sigma_n^{k_n^0-1}=0$. Combining (\ref{ineq:supleftboundary}), (\ref{ineq:suprightboundary}), (\ref{ineq:supfreeboundary}), (\ref{ineq:tildeBsup}) and (\ref{limsup:case1sigma}) we get
\begin{equation*}
 \begin{split}
 &\H_{1,n}^{\ell,k_n,\T_n}(u_n)=\frac12J_1(v^1-v^0)+\sum_{i\geq0}\bigg\{J_2\left(\frac{v^{i+2}-v^i}{2}\right)+\frac{1}{2}J_1(v^{i+2}-v^{i+1})\\
 &~+\frac{1}{2} J_1(v^{i+1}-v^i)-J_0(\gamma)\bigg\}+\frac12J_1(\tilde{u}^1-\tilde{u}^0)+\sum_{i\geq0}\bigg\{J_2\left(\frac{\tilde{u}^{i+2}-\tilde{u}^i}{2}\right)\\
 &~+\frac{1}{2}J_1(\tilde{u}^{i+2}-\tilde{u}^{i+1})+\frac{1}{2} J_1(\tilde{u}^{i+1}-\tilde{u}^i)-J_0(\gamma)\bigg\}+\frac12J_1(z^1-z^0)\\
 &~+\sum_{i=0}^{q-1}\bigg\{J_2\left(\frac{z^{i+2}-z^i}{2}\right)+\frac{1}{2}J_1(z^{i+2}-z^{i+1})+\frac{1}{2} J_1(z^{i+1}-z^i)-J_0(\gamma)\bigg\}\\
 &~+\l(\hat{r}(\T)-\frac12\r)\l(J_{CB}\l(z^{q+1}-z^q\r)-J_0(\gamma)\r)+\frac{1}{2}J_1(w^0-w^{-1})\\
 &~+\sum_{i\leq0}\left\{J_2\left(\frac{w^i-w^{i-2}}{2}\right)+\frac{1}{2}J_1(w^i-w^{i-1})+\frac{1}{2} J_1(w^{i-1}-w^{i-2})-J_0(\gamma)\right\}\\
 &~+r_1(n)+r_2(n)-3J_0(\gamma)\\
 &\leq B(u_0^{(1)},\gamma)+B(u_1^{(1)},\gamma)+B(\gamma)+ B_{IF}(\hat{r}(\T)-1)-3J_0(\gamma)+4\eta+r_1(n)+r_2(n)
 \end{split}
\end{equation*}
which yields (\ref{eq:limsupcrackxi<}).\\
Let now $\hat{r}(\T)=+\infty$. By definition, we have $B_{IF}(+\infty)=B(\gamma)$ and thus $B_{AIF}(+\infty)=B_{IJ}$ and we can use the same recovery sequence as used in case of an internal jump in Theorem 4.2. in \cite[p.~807]{NNN}. 

\textit{\underline{Case (3):}} We have to prove that there exists a sequence $(u_n)$ converging in $L^1(0,1)$ to $u$, given in (\ref{ucrackxi}), satisfying (\ref{eq:limsupcrackxi=}).\\
Without loss of generality we can assume that $\hat{r}(\T)<+\infty$, otherwise the inequality is trivial. Recall that $k_n^1=t_n^{k_n^1}$ by (\ref{def:atoms}), and hence $r(\T_n)=t_n^{k_n^1+1}$. Let $(k_n^0)_n\subset\N$ be such that $\lambda_nk_n^0\to 0$ as $n\to\infty$ and $k_n^0\geq N_1+1$. We now construct a sequence $(u_n)$ by means of the functions $v,w$ and $\tilde{u}$:
\begin{equation*}
 u_n^i=\begin{cases}
        \lambda_nv^i&\text{if }0\leq i\leq k_n^0,\\
	\lambda_n(v^{k_n^0}-\tilde{u}^{k_n^1-i}+\tilde{u}^{k_n^1-k_n^0})&\text{if }k_n^0\leq i\leq k^1_n,\\
	\l(\ell+\lambda_nw^{r(\T_n)-n}\r)\frac{i-k_n^1}{r(\T_n)-k_n^1}+\lambda_n\l(v^{k_n^0}+\tilde{u}^{k_n^1-k_n^0}\r)\frac{r(\T_n)-i}{r(\T_n)-k_n^1}&\text{if }k_n^1\leq i\leq r(\T_n),\\
	\ell+\lambda_nw^{i-n}&\text{if }r(\T_n)\leq i \leq n.
       \end{cases}
\end{equation*}
By definition of the function $v$ and $w$ the sequence $u_n$ satisfies the boundary conditions (\ref{def:boundarycond}). We have $u_n^{i+1}-u_n^i=\lambda_n\gamma$ for $N_1\leq i\leq k_n^1-\tilde{N}$ and $r(\T_n)\leq i\leq n-N_2$ for $n$ large enough. Since $u_n$ is affine on $\lambda_n(k_n^1,r(\T_n))$ we have $u_n\in\A_{\T_n}(0,1)$. Moreover, 
\begin{equation*}
\begin{split}
 u_n^{t_n^{k^1_n+1}}-u_n^{k^1_n}=&\ell+\lambda_n(w^{r(\T_n)-n}-v^{k_n^0}-\tilde{u}^0-\tilde{u}^{k_n^1-k_n^0})\\
=&\ell+\lambda_n(w^{r(\T_n)-n}-w^{-N_2}+v^{N_1}-v^{k_n^0}+\tilde{u}^{\tilde{N}}\\
 &-\tilde{u}^{k^1_n-k_n^0}-v^{N_1}-\tilde{u}^{\tilde{N}}+w^{-N_2})\\
=&\ell+\lambda_n\gamma(r(\T_n)-n+N_2-k_n^0+N_1+\tilde{N}-(k^1_n-k_n^0))\\
 &-\lambda_n(v^{N_1}+\tilde{u}^{\tilde{N}}+w^{-N_2})\\
=&\ell-n\lambda_n\gamma+\lambda_n(\gamma(N_1+\tilde{N}+N_2+\hat{r}(\T))-v^{N_1}-\tilde{u}^{\tilde{N}}+w^{-N_2}),
\end{split}
\end{equation*}
where we used $\hat r (\T)=r(\T_n)-k_n^1$ for $n$ large enough. Hence, we can conclude
\begin{equation}
 u_n^{r(\T_n)}-u_n^{k^1_n}\to \ell-\gamma.\label{lim:lg1xi}
\end{equation}
Thus, we have that $u_n$ converges to $u$ in $L^1(0,1)$. By using $u_n\in\A_{\T_n}(0,1)$ and (\ref{lim:lg1xi}) we obtain
\begin{equation*}
 \frac{u_n^{i+1}-u_n^i}{\lambda_n}=\frac{u_n^{r(\T_n)}-u_n^{k_n^1}}{(r(\T_n)-k_n^1)\lambda_n}\to\infty
\end{equation*}
as $n\to\infty$ for $k_n^1\leq i\leq r(\T_n)-1$. Hence 
\begin{equation*}
\begin{split}
 \sigma_n^{k_n^1-1}=&\frac12 J_1\l(\tilde{u}^1-\tilde{u}^0\r)-J_0(\gamma)+r_1(n)\\
 \mu_n^{i}=&-J_0(\gamma)+r_2(n),\qquad\mbox{for $k_n^1\leq i< r(\T_n)$}
\end{split}
\end{equation*}
with $r_1(n),r_2(n)\to0$ as $n\to\infty$. This leads, by using $\lim_{n\to\infty}r(\T_n)-k_n^1=\hat r\l(\T\r)$, to the estimate
\[\lim_{n\to\infty}\l(\frac{1}{2}\mu_n^{k_n^1}+\sum_{i=k_n^1+1}^{r(\T_n)-1}\mu_n^i\r)= -\l(\hat{r}(\T)-\frac12\r) J_0(\gamma).\]
Now similar calculations as before lead, by using (\ref{ineq:supleftboundary}), (\ref{ineq:suprightboundary}) and (\ref{ineq:supfreeboundary}), to 
\[\limsup_n\H_{1,n}^{\ell,k_n,\T_n}(u_n)\leq B(u_0^{(1)},\gamma)+B(u_1^{(1)},\gamma)+B(\gamma)-\l(\frac{3}{2}+\hat{r}(\T)\r)J_0(\gamma)+3\eta\]
which proves (\ref{eq:limsupcrackxi=}) by the arbitrariness of $\eta>0$.

\textit{\underline{Case (4):}} Here, we prove that there exists a sequence $(u_n)$ converging in $L^1(0,1)$ to $u$, given by (\ref{ucrackxi}), which satisfies (\ref{eq:limsupcrackxi>}).\\
Without loss of generality we can assume $b(0,\T)<+\infty$. By the definition of $b(0,\T)$, we can find a sequence $(h_{n})$ such that 
\[\lim_{n\to\infty}(t_n^{h_n+1}-t_n^{h_n})=b(0,\T).\]
We construct now the sequence $(u_n)$ by means of the functions $v$ and $w$:
 \[u_n^i=\begin{cases}
          \lambda_n v^i &\text{if } 0\leq i\leq t_n^{h_n},\\
 	  \frac{t_n^{h_n+1}-i}{t_n^{h_n+1}-t_n^{h_n}}\lambda_nv^{t_n^{h_n}}+\frac{i-t_n^{h_n}}{t_n^{h_n+1}-t_n^{h_n}}(\ell+\lambda_nw^{t_n^{h_n+1}-n})&\text{if } t_n^{h_n}\leq i\leq t_n^{h_n+1},\\
 	  \ell+\lambda_nw^{i-n} &\text{if } t_n^{h_n+1}\leq i \leq n.
         \end{cases}
 \]
 This sequence satisfies the boundary conditions (\ref{def:boundarycond}) and $u_n^{i+1}-u_n^i=\lambda_n\gamma$ for $N_1\leq i\leq t_n^{h_n}$ and for $t_n^{h_n+1}\leq i\leq n-N_2$ and we have
\begin{equation*}\label{lim:lg1c}
\begin{split}
 u_n^{t_n^{h_n+1}}-u_n^{t_n^{h_n}}=&\ell+\lambda_n(w^{t_n^{h_n+1}-n}-v^{t_n^{h_n}})\\
=&\ell+\lambda_n(w^{t_n^{h_n+1}-n}-w^{-N_2}+w^{-N_2}-v^{t_n^{h_n}}+v^{N_1}-v^{N_1})\\
=&\ell+\lambda_n(\gamma(t_n^{h_n+1}-t_n^{h_n}-n+N_2+N_1)+w^{-N_2}-v^{N_1})\to\ell-\gamma.
\end{split}
\end{equation*}
Thus, $u_n\to u$ in $L^1(0,1)$. Furthermore, we obtain for $t_n^{h_n}\leq i\leq t_n^{h_n+1}-1$,
\[\mu_n^i=J_{CB}\l(\frac{u_n^{t_n^{h_n+1}}-u_n^{t_n^{h_n}}}{\lambda_n(t_n^{h_n+1}-t_n^{h_n})}\r)-J_0(\gamma)\to-J_0(\gamma)\]
as $n\to\infty$. This implies 
\[\sum_{i=t_n^{h_n}}^{t_n^{h_n+1}-1}\mu_n^i=-b(0,\T)J_0(\gamma),\]
and together with (\ref{ineq:supleftboundary}) and (\ref{ineq:suprightboundary}) the desired inequality (\ref{eq:limsupcrackxi>}) follows.

\textbf{Jump in $(0,1)$} We have to prove that there exists a sequence $(u_n)$ converging in $L^1(0,1)$ to $u$, given in (\ref{ucrackc}), satisfying 
$$\lim_n \H_{1,n}^{\ell,k_n,\T_n}(u_n)\leq B(u_0^{(1)},\gamma)+B(u_1^{(1)},\gamma)-b(x,\T)J_0(\gamma)-J_0(\gamma).$$
This can be shown analogously to case (4) for a jump in $0$, by using sequence $(h_n)\subset\N$ with $t_n^{h_n}, t_n^{h_n+1}\in\T_n$ for all $n\in\N$ such that
$$ \lim_{n\to\infty}(t_n^{h_n+1}-t_n^{h_n})=b(x,\T).$$ 
\end{proof}

%
%

\section{Minimum Problems}

According to Theorem~\ref{th:fracturennn} and Theorem~\ref{theorem:fracture}, the functionals $\H_{1,n}^{\ell,k_n,\T_n}$ and $H_{1,n}^\ell$ do not have the same $\Gamma$-limit for $\ell>\gamma$, while they coincide in the case $\ell\leq\gamma$. In order to analyze the validity of the QC-approximation also for $\ell>\gamma$, we study the minimum of $\H_1^{\ell,\T}$ in dependence of the choice of representative atoms described by $\T$. We give sufficient conditions on $\T$ such that $\min_uH_1^\ell(u)=\min_u\H_1^{\ell,\T}(u)$. Moreover, we give examples in which the minimal energies and minimizers of $H_1^\ell$ and $\H_1^{\ell,\T}$ do not coincide. To this end, certain relations between different boundary layer and jump energies are needed, which we provide in several lemmas at the beginning of this section. Some of these relations are proven under additional though quite general assumptions on the potentials $J_1$ and $J_2$. In Proposition~\ref{proplj}, we show that all these assumptions are satisfied for the classical Lennard-Jones and Morse potentials, see (\ref{def:LJ}) and (\ref{def:morse}). First, let us recall some estimates for the boundary layer energies from \cite{NNN}.
\begin{lemma}\cite[Lemma 5.1]{NNN}\label{lemma:NNN51}
Let [LJ1]--[LJ4] be satisfied. Then
\begin{enumerate}
 \item[(1)] $\frac{1}{2}J_1(\delta_1)\leq B(\gamma)\leq\frac{1}{2}J_1(\gamma)$;
 \item[(2)] $B(\theta,\gamma)\geq\frac{1}{2}J_1(\theta)$ for all $\theta>0$;
 \item[(3)] $B_b(\theta)\geq\frac{1}{2}J_1(\delta_1)$ for all $\theta>0$;
 \item[(4)] $B_b(\delta_1)=\frac{1}{2}J_1(\delta_1)$.
\end{enumerate}
\end{lemma}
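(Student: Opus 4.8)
The plan is to derive all four estimates from two elementary structural facts shared by the functionals in (\ref{fracture:b}), (\ref{eq:elasticboundary}) and (\ref{fracture:bb}): the \emph{spring term} $\tfrac12 J_1(v^1-v^0)$ is always at least $\tfrac12\inf_z J_1(z)=\tfrac12 J_1(\delta_1)$ by (\ref{def:gamma}), and equals $\tfrac12 J_1(\theta)$ whenever the competitor carries the constraint $v^0=0$, $v^1=\theta$; and each summand of the \emph{bulk sum} is non-negative. For the second fact I would set $z=\tfrac{v^{i+2}-v^i}{2}$, $z_1=v^{i+1}-v^i$, $z_2=v^{i+2}-v^{i+1}$, so that $z_1+z_2=2z$; the definition (\ref{def:J_0}) of $J_0$ gives $J_2(z)+\tfrac12(J_1(z_1)+J_1(z_2))\ge J_0(z)$, and $\{\gamma\}=\argmin J_0$ (see [LJ4]) yields $J_0(z)-J_0(\gamma)\ge 0$. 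In (\ref{eq:elasticboundary}) with $\ell=\gamma$ the extra affine correction is harmless because $J_0^{**}$ is $C^1$ with $(J_0^{**})'(\gamma)=0$ and $J_0^{**}(\gamma)=J_0(\gamma)$ (the envelope being flat on $[\gamma,+\infty)$, cf.\ (\ref{j0**})). With these two facts the lower bounds in (2), (3) and the left inequality of (1) are immediate: discarding the non-negative bulk sum leaves exactly the spring term.

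For the upper bound in (1) I would test (\ref{fracture:b}) on the homogeneous competitor $v^i=\gamma i$ (admissible with $N=0$): then $v^1-v^0=\gamma$ and every summand equals $J_2(\gamma)+J_1(\gamma)-J_0(\gamma)=J_{CB}(\gamma)-J_0(\gamma)$, which vanishes by (\ref{ass:unique2}) applied at $z=\gamma$ (legitimate since $J_0(\gamma)=J_0^{**}(\gamma)$ by [LJ4]). Hence $B(\gamma)\le\tfrac12 J_1(\gamma)$, closing (1). Items (2) and (3) need only the lower bounds just described, so nothing further is required there.

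Item (4) then reduces to exhibiting a competitor in (\ref{fracture:bb}) with energy $\tfrac12 J_1(\delta_1)$, since the reverse inequality is (3). I would take the minimal admissible chain — a single active bond, i.e.\ $k=0$ in (\ref{fracture:bb}), for which the sum is empty and $v^1-v^0=v^{k+1}-v^k=\theta$; at $\theta=\delta_1$ its energy is exactly $\tfrac12 J_1(\delta_1)$, so $B_b(\delta_1)\le\tfrac12 J_1(\delta_1)$ and equality follows. The same competitor incidentally gives $B_b(\theta)\le\tfrac12 J_1(\theta)$ for every $\theta>0$.

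The only point requiring care is (4): one must check that this degenerate chain is indeed admissible in (\ref{fracture:bb}) and that no longer competitor does strictly better — which is clear, since every additional summand is non-negative while the single-bond competitor already sits at the minimum of the spring term. Attainment of the minima (needed only to write ``$\min$'' in (\ref{fracture:b})--(\ref{fracture:bb})) follows from the coercivity built into [LJ4], but none of the four estimates depends on it: the lower bounds hold for every competitor, and each upper bound uses one explicit competitor.
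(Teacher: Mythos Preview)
Your proposal is correct and follows exactly the approach the paper attributes to \cite[Lemma 5.1]{NNN}: the paper itself does not reprove this lemma, but its proof of the companion Lemma~\ref{lemma:B} (``We can argue as in \cite[Lemma 5.1 (1)]{NNN}'') spells out precisely your two structural facts --- non-negativity of each bulk summand via $J_0(z)\ge J_0(\gamma)$, and testing with the homogeneous competitor $v^i=i\gamma$ for the upper bound --- together with the choice $k=0$ in (\ref{fracture:bb}) for the bound on $B_b$. Your treatment of (2), checking that $(J_0^{**})'(\gamma)=0$ so that the affine correction drops out, and your verification in (4) that the $k=0$ competitor is admissible with $v^1-v^0=\theta$, are exactly the small points one needs; nothing is missing.
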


In this chapter, we also need a similar estimate for $B_{IF}(m)$ as for $B(\gamma)$ and an upper bound for $B_b(\theta)$.
\begin{lemma}\label{lemma:B}
 Let [LJ1]--[LJ4] be satisfied. Then 
\begin{equation}\label{ineq:bifm}
 \frac{1}{2}J_1(\delta_1)\leq B_{IF}(m)\leq\frac{1}{2}J_1(\gamma)
\end{equation}
for every $m\in\mathbb{N}\cup\{+\infty\}$ and $B_b(\theta)\leq\frac12J_1(\theta)$, where $B_{IF}(m)$ and $B_b(\theta)$ are defined in \eqref{fracture:bif} and \eqref{fracture:bb}.
\end{lemma}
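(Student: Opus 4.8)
The plan is to prove the two-sided estimate \eqref{ineq:bifm} for $m\in\N$ and for $m=+\infty$ separately, and to deduce $B_b(\theta)\le\frac12 J_1(\theta)$ from a single explicit competitor. The case $m=+\infty$ needs nothing new: $B_{IF}(+\infty)=B(\gamma)$ by definition, so \eqref{ineq:bifm} is then exactly Lemma~\ref{lemma:NNN51}(1). So fix $m\in\N$ and let $k\in\N$, $v:\N\to\R$ with $v^0=0$, be an arbitrary competitor in \eqref{fracture:bif}.

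For the lower bound I would show that in \eqref{fracture:bif} every contribution other than the leading term $\frac12 J_1(v^1-v^0)$ is non-negative. For the $i$-th summand set $w=\frac{v^{i+2}-v^i}{2}$, $z_1=v^{i+1}-v^i$, $z_2=v^{i+2}-v^{i+1}$, so $z_1+z_2=2w$; then by the definition \eqref{def:J_0} of $J_0$ and since $\gamma$ is the unique minimiser of $J_0$ (hypothesis [LJ4]),
\[
J_2(w)+\frac12\big(J_1(z_1)+J_1(z_2)\big)-J_0(\gamma)\ \ge\ J_0(w)-J_0(\gamma)\ \ge\ 0.
\]
For the interface term one uses the inequalities $J_{CB}(z)\ge J_0(z)\ge J_0(\gamma)$ recorded just after \eqref{def:mul}, which give $\frac{2m+1}{2}\big(J_{CB}(v^{k+1}-v^k)-J_0(\gamma)\big)\ge0$. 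Hence the energy in \eqref{fracture:bif} is at least $\frac12 J_1(v^1-v^0)\ge\frac12 J_1(\delta_1)$ by [LJ4], and passing to the infimum over $v$ and $k$ gives $B_{IF}(m)\ge\frac12 J_1(\delta_1)$.

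For the upper bounds I would use homogeneous and degenerate competitors. Testing \eqref{fracture:bif} (for an arbitrary $k$) with $v^i=\gamma i$, which is admissible since $v^0=0$, and using the identity $J_0(\gamma)=J_{CB}(\gamma)=J_1(\gamma)+J_2(\gamma)$ — a consequence of \eqref{ass:unique2} (part of [LJ2]) together with $J_0(\gamma)=J_0^{**}(\gamma)$ (hypothesis [LJ4]) — every summand $J_2(\gamma)+\frac12 J_1(\gamma)+\frac12 J_1(\gamma)-J_0(\gamma)$ vanishes, as does the interface term $\frac{2m+1}{2}(J_{CB}(\gamma)-J_0(\gamma))$, leaving only $\frac12 J_1(\gamma)$; hence $B_{IF}(m)\le\frac12 J_1(\gamma)$. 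Likewise, testing \eqref{fracture:bb} with the degenerate competitor $k=0$, $v^0=-\theta$, $v^1=0$ — for which the constraints $v^{k+1}=v^1=0$ and $v^{k+1}-v^k=\theta$ hold and the sum over $0\le i\le k-1$ is empty — gives energy exactly $\frac12 J_1(v^1-v^0)=\frac12 J_1(\theta)$, so that $B_b(\theta)\le\frac12 J_1(\theta)$.

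No step is technically deep; the two points that require a little care are the termwise non-negativity in the lower bound, which has to be extracted from the definition \eqref{def:J_0} of $J_0$ and from $J_{CB}\ge J_0\ge J_0(\gamma)$, and the identity $J_0(\gamma)=J_{CB}(\gamma)$ used in the upper bound, which is exactly where [LJ2] and [LJ4] enter. One should also note that $k=0$ is an admissible index in \eqref{fracture:bb} — as is implicit in the appearance of $v^0$ there — and that this degenerate competitor is precisely the one that makes $B_b(\theta)\le\frac12 J_1(\theta)$ sharp, for instance at $\theta=\delta_1$, where it matches Lemma~\ref{lemma:NNN51}(4).
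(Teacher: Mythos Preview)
Your proof is correct and follows essentially the same route as the paper: the lower bound via termwise non-negativity (using $J_0$ as minimiser of the effective energy and $J_{CB}\ge J_0\ge J_0(\gamma)$), the upper bound via the homogeneous competitor $v^i=\gamma i$, and $B_b(\theta)\le\frac12 J_1(\theta)$ via the degenerate $k=0$ competitor. The paper's proof is terser and omits the explicit justification of the interface term and the identity $J_0(\gamma)=J_{CB}(\gamma)$ that you spell out, but the ideas are identical.
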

\begin{proof}
 We can argue as in \cite[Lemma 5.1 (1)]{NNN}. The sum in the definition of $B_{IF}(m)$, see \eqref{fracture:bif}, is non-negative since $\gamma$ is the minimum point of $J_0$ and we have
$$B_{IF}(m)\geq \min \frac12J_1(z)=\frac12J_1(\delta_1).$$
To show the upper bound, we can use the function $u:\N\to\R$ with $u^i=i\gamma$ as a competitor for $B_{IF}(m)$ for every $m\in\N$ and deduce the upper bound. The estimate for $B_b(\theta)$ follows by choosing $k=0$ in definition (\ref{fracture:bb}).  
\end{proof}

 To compare $\min_uH_1^\ell(u)$ and $\min_u \H_1^{\ell,\T}(u)$, we need to estimate $B_{IFJ}(n,k,\theta)$, defined in (\ref{def:bifjxint}). This will be done, under additional assumptions on $J_1,J_2$, in the following lemmas.
\begin{lemma}\label{lemma:bifj} 
Let $J_1,J_2$ be such that [LJ1]--[LJ4] are satisfied and $J_1(\gamma)$, $J_2(\gamma)$, $J_2(\delta_1)<0$. Define the quantity
\begin{equation}
 \tilde{B}_{IFJ}(n,k):=\min\left\{B_{AIF}(n),B(\gamma)-\l(\frac12+n\r)J_0(\gamma),-kJ_0(\gamma)\right\},
\end{equation}
where $B_{AIF}$ is as in (\ref{def:baif}). Then 
 \begin{itemize}
  \item[(i)] $\tilde{B}_{IFJ}(n,1)=-J_0(\gamma)$ for all $n\in\N\cup\{+\infty\}$, $n\geq1$,
  \item[(ii)] $\tilde{B}_{IFJ}(1,k)=B(\gamma)-\frac32J_0(\gamma)$ for all $k\in\N\cup\{+\infty\},k\geq2$,
  \item[(iii)] $\tilde{B}_{IFJ}(n,k)=B_{AIF}(n)$ for all $n,k\in\N\cup\{+\infty\}$ with $n\geq2,k\geq2$.
 \end{itemize}
\end{lemma}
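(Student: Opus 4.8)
The proof reduces to verifying a short list of scalar inequalities among $B(\gamma)$, $B_{IF}(m)$ and $J_0(\gamma)$, after which each of (i)--(iii) becomes an elementary comparison of the three arguments of the minimum defining $\tilde B_{IFJ}$. The plan is to first collect the following. (a) Since $J_0(\gamma)=J_0^{**}(\gamma)$ by [LJ4], (\ref{ass:unique2}) gives $J_0(\gamma)=J_{CB}(\gamma)=J_1(\gamma)+J_2(\gamma)$, so the hypotheses yield $J_0(\gamma)<0$, hence $-J_0(\gamma)>0$. (b) By Lemma~\ref{lemma:NNN51}(1) and Lemma~\ref{lemma:B}, $\tfrac12 J_1(\delta_1)\le B(\gamma)\le\tfrac12 J_1(\gamma)$ and $\tfrac12 J_1(\delta_1)\le B_{IF}(m)\le\tfrac12 J_1(\gamma)$ for every $m\in\N\cup\{+\infty\}$ (recall $B_{IF}(+\infty)=B(\gamma)$). (c) The separation $J_0(\gamma)<J_1(\delta_1)$: since $\gamma$ is the unique minimiser of $J_0$ and $J_0\le J_{CB}$ pointwise, $J_0(\gamma)\le J_0(\delta_1)\le J_{CB}(\delta_1)=J_1(\delta_1)+J_2(\delta_1)<J_1(\delta_1)$, using $J_2(\delta_1)<0$; combined with (b) this gives $B(\gamma)>\tfrac12 J_0(\gamma)$, $B_{IF}(m)>\tfrac12 J_0(\gamma)$ and $B(\gamma)+B_{IF}(m)>J_0(\gamma)$. (d) From $J_1(\gamma),J_2(\gamma)<0$ we have $2J_1(\gamma)+J_2(\gamma)<0$, i.e.\ $\tfrac12 J_1(\gamma)\le-\tfrac12 J_0(\gamma)$, hence $B(\gamma),B_{IF}(m)\le-\tfrac12 J_0(\gamma)<0$ by (b).

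For (i) I would show that $-J_0(\gamma)$ is the least of the three entries when $k=1$. The comparison $-J_0(\gamma)\le B(\gamma)-(\tfrac12+n)J_0(\gamma)$ is equivalent to $B(\gamma)\ge(n-\tfrac12)J_0(\gamma)$, and for $n\ge1$ the right-hand side is $\le\tfrac12 J_0(\gamma)<B(\gamma)$ by (a) and (c). The comparison $-J_0(\gamma)\le B_{AIF}(n)=B_{IF}(n-1)+B(\gamma)-2J_0(\gamma)$ is equivalent to $B_{IF}(n-1)+B(\gamma)\ge J_0(\gamma)$, which is exactly (c) (for $n=+\infty$ it reads $2B(\gamma)-2J_0(\gamma)\ge-J_0(\gamma)$, true by (d)). Hence $\tilde B_{IFJ}(n,1)=-J_0(\gamma)$.

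For (ii) and (iii) I would carry out the analogous bookkeeping, in each case reducing the two required comparisons to (a)--(d). In (ii), with $n=1$, $k\ge2$: $B(\gamma)-\tfrac32 J_0(\gamma)\le B_{AIF}(1)$ is equivalent to $B_{IF}(0)\ge\tfrac12 J_0(\gamma)$ (true by (c)), and $B(\gamma)-\tfrac32 J_0(\gamma)\le-kJ_0(\gamma)$ is equivalent to $B(\gamma)\le-(k-\tfrac32)J_0(\gamma)$, whose worst case $k=2$ reads $B(\gamma)\le-\tfrac12 J_0(\gamma)$ (true by (d)); so $\tilde B_{IFJ}(1,k)=B(\gamma)-\tfrac32 J_0(\gamma)$. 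In (iii), with $n,k\ge2$: $B_{AIF}(n)\le B(\gamma)-(\tfrac12+n)J_0(\gamma)$ is equivalent to $B_{IF}(n-1)\le-(n-\tfrac32)J_0(\gamma)$, worst case $n=2$ giving $B_{IF}(1)\le-\tfrac12 J_0(\gamma)$ (true by (d)), and $B_{AIF}(n)\le-kJ_0(\gamma)$ is equivalent to $B_{IF}(n-1)+B(\gamma)\le-(k-2)J_0(\gamma)$, worst case $k=2$ giving $B_{IF}(n-1)+B(\gamma)\le0$, which holds since both summands are $\le\tfrac12 J_1(\gamma)<0$; so $\tilde B_{IFJ}(n,k)=B_{AIF}(n)$. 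In every statement the limiting indices $n$ or $k=+\infty$ are immediate, since one competing entry then equals $+\infty$ while the other equals the claimed value.

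The only step that is not pure arithmetic is item (c); accordingly, the main point to get right is that the three sign hypotheses are exactly tuned so that $J_2(\delta_1)<0$ produces $J_0(\gamma)<J_1(\delta_1)$, while $J_1(\gamma),J_2(\gamma)<0$ produce $J_0(\gamma)<0$ and $\tfrac12 J_1(\gamma)\le-\tfrac12 J_0(\gamma)$. Together with the two-sided bounds from Lemma~\ref{lemma:NNN51} and Lemma~\ref{lemma:B}, these three consequences pin down the minimiser in each of the three regimes, and I do not expect any further obstacle.
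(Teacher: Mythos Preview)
Your proof is correct and follows essentially the same approach as the paper: both derive $J_0(\gamma)<J_1(\delta_1)$ from $J_2(\delta_1)<0$, combine it with the two-sided bounds $\tfrac12 J_1(\delta_1)\le B(\gamma),B_{IF}(m)\le\tfrac12 J_1(\gamma)$, and reduce each case to the resulting scalar inequalities. One harmless slip: in your item (d) you write ``$-\tfrac12 J_0(\gamma)<0$'', but $-\tfrac12 J_0(\gamma)>0$ since $J_0(\gamma)<0$; the negativity you actually use in (iii) is $\tfrac12 J_1(\gamma)<0$, and in (i) the $n=+\infty$ case follows from (c) rather than (d).
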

\begin{proof}
(i) From $J_2(\delta_1)<0$, we deduce $J_0(\gamma)\leq J_0(\delta_1)\leq J_1(\delta_1)+J_2(\delta_1)<J_1(\delta_1)$. Hence, we obtain by $B(\gamma),B_{IF}(n)\geq\frac12J_1(\delta_1)$, see Lemma \ref{lemma:B} (1) and \eqref{ineq:bifm}, and the definition of $B_{AIF}(n)$, see (\ref{def:baif}), that
\begin{align*}
 B_{AIF}(n)\geq& J_1(\delta_1)-2J_0(\gamma)>-J_0(\gamma),\\
 B(\gamma)-\l(\frac32+n\r)J_0(\gamma)\geq&B(\gamma)-\frac32J_0(\gamma)\geq \frac12J_1(\delta_1)-\frac32J_0(\gamma)>-J_0(\gamma).
\end{align*}
(ii) From $B_{IF}(m)\geq\frac12J_1(\delta_1)$, $0>J_1(\delta_1)>J_0(\gamma)$ and $B(\gamma)\leq\frac12J_1(\gamma)<0$, $J_0(\gamma)<J_1(\gamma)$, we deduce %
\begin{align*}
 B_{AIF}(1)\geq&\frac12J_1(\delta_1)+B(\gamma)-2J_0(\gamma)>B(\gamma)-\frac32J_0(\gamma),\\
 -kJ_0(\gamma)\geq&-2J_0(\gamma)>\frac12J_1(\gamma)-\frac32J_0(\gamma)\geq B(\gamma)-\frac32J_0(\gamma).
\end{align*}
(iii) Again by $B_{IF}(m),B(\gamma)\leq \frac12J_1(\gamma)<0$ and $J_0(\gamma)<0$, we conclude
\begin{align*}
 B_{AIF}(n)\leq&\frac12J_1(\gamma)+B(\gamma)-2J_0(\gamma)<B(\gamma)-\frac52J_0(\gamma)\\
 B_{AIF}(n)\leq&J_1(\gamma)-2J_0(\gamma)<-kJ_0(\gamma),
\end{align*}
which proves the statement.
\end{proof}

In order to compute the value of $B_{IFJ}(n,k,\theta)$, see (\ref{def:bifjxint}), we provide an estimate for $B_{AIF}(n)$.
\begin{lemma}\label{theorem:bif}
Let $J_1,J_2$ satisfy assumptions [LJ1]--[LJ4] and additionally 
\begin{equation}
 R(t):=J_2\l(\frac{\gamma+t}{2}\r)+\frac12\l(J_1(\gamma)+J_1(t) \r) - J_0(\gamma) - \frac32 \l( J_{CB}(t)-J_0(\gamma) \r)\leq0\label{ineq:chapter5a}
\end{equation}
for all $t\in\operatorname{dom}J_1$. Then $B_{IF}(m)= B(\gamma)$ for any $m\geq1$ and $B_{AIF}(n)=B_{IJ}$ for $n\geq2$, where $B_{IF}(m)$, $B(\gamma)$, $B_{AIF}(n)$ and $B_{IJ}$ are defined in \eqref{fracture:bif}, \eqref{fracture:b}, \eqref{def:baif} and \eqref{eq:fractureIJ}. 
\end{lemma}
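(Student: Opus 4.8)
The plan is to establish $B_{IF}(m)=B(\gamma)$ for every finite $m\geq 1$ (the case $m=\infty$ being the definition); then $B_{AIF}(n)=B_{IJ}$ for $n\geq 2$ follows immediately from \eqref{def:baif}, \eqref{eq:fractureIJ} and $B_{IF}(n-1)=B(\gamma)$, since $B_{AIF}(n)=B_{IF}(n-1)+B(\gamma)-2J_0(\gamma)=2B(\gamma)-2J_0(\gamma)=B_{IJ}$. Two facts will be used repeatedly: $J_0(\gamma)=J_{CB}(\gamma)$, which holds since $J_0(\gamma)=J_0^{**}(\gamma)$ by [LJ4] and hence \eqref{ass:unique2} applies; and $J_{CB}(t)-J_0(\gamma)\geq 0$ for all $t$, which follows from $J_{CB}(t)\geq J_0(t)\geq J_0(\gamma)=J_{CB}(\gamma)$. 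The inequality $B_{IF}(m)\leq B(\gamma)$ is easy: for $\eta>0$ pick a competitor $v$ in \eqref{fracture:b} with $v^0=0$, $v^{i+1}-v^i=\gamma$ for $i\geq N$ and energy at most $B(\gamma)+\eta$, and test \eqref{fracture:bif} with this $v$ and $k=N$; since $v^{N+1}-v^N=\gamma$ the terminal term $\frac{2m+1}{2}(J_{CB}(v^{N+1}-v^N)-J_0(\gamma))$ vanishes and the $B(\gamma)$-summands of index $\geq N$ vanish too, so the two energies of $v$ agree and $\eta\to 0$ gives the bound.

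For the reverse inequality, which is where the additional hypothesis \eqref{ineq:chapter5a} is needed, take any competitor $(v,k)$ for $B_{IF}(m)$ and put $t:=v^{k+1}-v^k$; if $t\notin\operatorname{dom}J_1$ the energy is $+\infty$ and nothing is to prove, so assume $t\in\operatorname{dom}J_1$. I would use as competitor for $B(\gamma)$ the sequence $w$ with $w^0=0$ whose increments are $v^1-v^0,\dots,v^{k+1}-v^k$ followed by $\gamma,\gamma,\dots$ — that is, one keeps all increments of $v$ up to and including $t$ and then continues at the equilibrium value $\gamma$. Using $J_0(\gamma)=J_{CB}(\gamma)$, all summands of the $B(\gamma)$-energy of $w$ with index $\geq k+1$ vanish, while the leading term and the summands of index $\leq k-1$ coincide termwise with the corresponding contributions to the $B_{IF}(m)$-energy of $v$; the sole difference is that the $B_{IF}(m)$-energy carries the terminal term $\frac{2m+1}{2}(J_{CB}(t)-J_0(\gamma))$ in place of the index-$k$ summand $J_2\!\left(\frac{\gamma+t}{2}\right)+\frac12 J_1(\gamma)+\frac12 J_1(t)-J_0(\gamma)$ of the $B(\gamma)$-energy of $w$. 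Comparing these two against the definition of $R$ in \eqref{ineq:chapter5a} gives
\[ \big(B_{IF}(m)\text{-energy of }v\big)-\big(B(\gamma)\text{-energy of }w\big)= -R(t)+(m-1)\big(J_{CB}(t)-J_0(\gamma)\big)\geq 0, \]
since $-R(t)\geq 0$ by \eqref{ineq:chapter5a} and $(m-1)(J_{CB}(t)-J_0(\gamma))\geq 0$ by $m\geq 1$ and $J_{CB}(t)\geq J_0(\gamma)$. Hence the $B_{IF}(m)$-energy of $v$ is at least $B(\gamma)$; taking the infimum over $(v,k)$ yields $B_{IF}(m)\geq B(\gamma)$, so $B_{IF}(m)=B(\gamma)$.

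The only real obstacle is the bookkeeping in the last step — verifying which summands of the two boundary-layer energies cancel and arriving at the clean identity with $-R(t)+(m-1)(J_{CB}(t)-J_0(\gamma))$ on the right-hand side. The decisive choice is to let the test sequence $w$ carry the increment $t$ before switching to $\gamma$: switching one index earlier would leave an extra second-neighbour contribution and fail to reproduce $R$. Once the correct $w$ is in place, the remaining computation is a matter of matching coefficients, and the degenerate cases ($t\notin\operatorname{dom}J_1$, $m=\infty$) as well as the passage from $B_{IF}$ to $B_{AIF}$ are straightforward.
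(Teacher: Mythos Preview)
Your proposal is correct and follows essentially the same approach as the paper: for $B_{IF}(m)\leq B(\gamma)$ you both test \eqref{fracture:bif} with a near-minimizer of $B(\gamma)$ so that the terminal term vanishes, and for $B_{IF}(m)\geq B(\gamma)$ you both take a competitor for $B_{IF}$, extend it by $\gamma$-increments beyond index $k{+}1$, and identify the resulting energy difference with $R(t)$. The only cosmetic difference is that the paper first reduces to $m=1$ via the monotonicity $B_{IF}(m)\geq B_{IF}(1)$ and then arrives at $B(\gamma)\leq B_{IF}(1)+\eta+R(t)$, whereas you treat general $m$ directly and absorb the monotonicity into the nonnegative term $(m-1)(J_{CB}(t)-J_0(\gamma))$; your clean identity $-R(t)+(m-1)(J_{CB}(t)-J_0(\gamma))$ is a mild streamlining of the same computation.
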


\begin{proof}
Let us first show that $B_{IF}(m)\leq B(\gamma)$. For every $\eta>0$ there exists, by the definition of $B(\gamma)$, in (\ref{fracture:b}), a function  $\tilde{u}:\N\to\R$ and $\tilde{N}\in\N$ such that $\tilde u^0 = 0$, $\tilde u^{i+1} - \tilde u^i=\gamma$ if $i\geq \tilde N$, satisfying (\ref{ineq:supfreeboundary}). The function $\tilde{u}$ is also a competitor for the minimum problem for $B_{IF}(m)$, see \eqref{fracture:bif}. Hence, we have for some $k>\tilde{N}+1$ 
\begin{align*}
 B_{IF}(m)\leq& \frac{1}{2}J_1(\tilde{u}^1-\tilde{u}^0)+\sum_{i=0}^{k-1}\bigg\{J_2\left(\frac{\tilde{u}^{i+2}-\tilde{u}^i}{2}\right)+\frac{1}{2}J_1(\tilde{u}^{i+2}-\tilde{u}^{i+1})\\
 &+\frac{1}{2} J_1(\tilde{u}^{i+1}-\tilde{u}^i)-J_0(\gamma)\bigg\}+\frac{2m+1}{2}\left(J_{CB}(\tilde{u}^{k+1}-\tilde{u}^k)-J_0(\gamma)\right)\\
\leq& B(\gamma)+\eta%
\end{align*}
and the assertion follows by the arbitrariness of $\eta>0$. \\
Let us now show $B_{IF}(m)\geq B(\gamma)$ for $m\geq1$. The definition of $B_{IF}(m)$, see (\ref{fracture:bif}), implies $B_{IF}(m)\geq B_{IF}(1)$ for all $m\geq1$. Let $\eta>0$. By the definition of $B_{IF}(1)$ in (\ref{fracture:bif}) there exists $u:\N\to\R$ with $u^0=0$, and $k\in\N$ such that
\begin{align*}
 & \frac{1}{2}J_1(u^1-u^0)+\sum_{i=0}^{k-1}\bigg\{J_2\left(\frac{u^{i+2}-u^i}{2}\right)+\frac{1}{2}J_1(u^{i+2}-u^{i+1})+\frac{1}{2} J_1(u^{i+1}-u^i)\\
 &-J_0(\gamma)\bigg\}+\frac{3}{2}\left(J_{CB}(u^{k+1}-u^k)-J_0(\gamma)\right)\leq  B_{IF}(1)+\eta.
\end{align*}
If we extend $u$ such that $u^{i+1}-u^i=\gamma$ for $i\geq k+1$, $u$ becomes a competitor for $B(\gamma)$, see \eqref{fracture:b}. Thus
\begin{align*}
 B(\gamma)\leq& \frac{1}{2}J_1(u^1-u^0)+\sum_{i\geq0}\bigg\{J_2\left(\frac{u^{i+2}-u^i}{2}\right)+\frac{1}{2}J_1(u^{i+2}-u^{i+1})+\frac{1}{2} J_1(u^{i+1}-u^i)\\
 &-J_0(\gamma)\bigg\}\leq B_{IF}(1)+\eta+R(u^{k+1}-u^k).
\end{align*}
By assumption (\ref{ineq:chapter5a}), we have $R(u^{k+1}-u^k)\leq 0$. Hence, by the arbitrariness of $\eta>0$, we have $B_{IF}(m)\geq B_{IF}(1)\geq B(\gamma)$ for all $m\geq1$. \\ 
Altogether, we have $B_{IF}(m)=B(\gamma)$ for $m\geq1$. Hence, we have by the definition of $B_{AIF}(n)$ and $B_{IJ}$, see (\ref{def:baif}) and (\ref{eq:fractureIJ}), that $B_{AIF}(n)=B_{IJ}$ for $n\geq2$. 
\end{proof}

Before we state our main result of this section, we show some estimates for the boundary layer energies in $H_1^\ell$, see (\ref{def:limitfrac}).
\begin{lemma}\label{lem:bbjbij}
 Let $J_1,J_2$ satisfy [LJ1]--[LJ4]. Then 
\begin{equation}\label{ineq:bbj<bij}
 B(\theta,\gamma)\leq B_{BJ}(\theta)\leq B(\theta,\gamma)+B_{IJ}\qquad\forall\theta>0,
\end{equation}
and $B_{IJ}>0$, where $B(\theta,\gamma)$, $B_{BJ}(\theta)$ and $B_{IJ}$ are defined in \eqref{eq:elasticboundary}, \eqref{eq:fractureBJ} and \eqref{eq:fractureIJ}. If, for $\theta>0$, there exists a constant $\eta_\theta>0$ such that $\frac12J_1(\gamma)+J_2\left(\frac{t+\gamma}{2} \right)\leq0$ for all $t\in\R$ with $J_1(t)<J_1(\theta)+2\eta_\theta$, it holds $B(\theta,\gamma)< B_{BJ}(\theta)$.
\end{lemma}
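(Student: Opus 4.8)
The plan is to prove the three assertions in turn, relying heavily on the recalled estimates of Lemma~\ref{lemma:NNN51} and Lemma~\ref{lemma:B}, together with the explicit definitions \eqref{eq:fractureBJ}, \eqref{eq:fractureIJ} and the minimum-problem structure of $B(\theta,\gamma)$, $B(\gamma)$, $B_b(\theta)$.

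\textbf{Step 1: the inequality $B(\theta,\gamma)\le B_{BJ}(\theta)$.} By \eqref{eq:fractureBJ}, $B_{BJ}(\theta)=\frac12J_1(\theta)+B_b(\theta)+B(\gamma)-2J_0(\gamma)$. Using Lemma~\ref{lemma:NNN51}~(3), $B_b(\theta)\ge\frac12J_1(\delta_1)$, and Lemma~\ref{lemma:NNN51}~(1), $B(\gamma)\ge\frac12J_1(\delta_1)$, we get $B_{BJ}(\theta)\ge\frac12J_1(\theta)+J_1(\delta_1)-2J_0(\gamma)$. I would argue this last quantity is $\ge B(\theta,\gamma)$: intuitively, $B(\theta,\gamma)$ is the energy of an elastic boundary layer with a prescribed first-spring value $\theta$, and from the structure of its minimum problem \eqref{eq:elasticboundary} one extracts a competitor whose energy is controlled by $\frac12J_1(\theta)$ plus bulk correction terms; alternatively, one constructs directly from the minimizer $v$ of $B(\theta,\gamma)$ (or from a near-minimizer) a test configuration for $B_{BJ}(\theta)$ by opening a crack far to the right, giving $B_{BJ}(\theta)\le B(\theta,\gamma)+(\text{terms})$ — so one must be careful about the direction. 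The cleaner route is probably: take a near-optimal $v$ for $B_{BJ}$'s constituent $B_b(\theta)$ and $B(\gamma)$, splice them, and compare; but the safest is to mimic verbatim the bookkeeping in \cite{NNN} that already establishes $B(\theta,\gamma)\le B_{BJ}(\theta)$ there, since the definitions are unchanged.

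\textbf{Step 2: the inequality $B_{BJ}(\theta)\le B(\theta,\gamma)+B_{IJ}$.} Here I would use the upper bound $B_b(\theta)\le\frac12J_1(\theta)$ from Lemma~\ref{lemma:B}. Then $B_{BJ}(\theta)\le J_1(\theta)+B(\gamma)-2J_0(\gamma)$. Since $B_{IJ}=2B(\gamma)-2J_0(\gamma)$, the claim $B_{BJ}(\theta)\le B(\theta,\gamma)+B_{IJ}$ reduces to $J_1(\theta)+B(\gamma)-2J_0(\gamma)\le B(\theta,\gamma)+2B(\gamma)-2J_0(\gamma)$, i.e. to $J_1(\theta)\le B(\theta,\gamma)+B(\gamma)$; and $B(\theta,\gamma)\ge\frac12J_1(\theta)$ by Lemma~\ref{lemma:NNN51}~(2) while $B(\gamma)\ge\frac12J_1(\delta_1)$, so it suffices that $\frac12J_1(\theta)\le B(\theta,\gamma)$ together with $\frac12J_1(\theta)\le B(\gamma)+(\text{something})$ — this needs a small additional comparison, most naturally a splicing argument: given near-minimizers of $B(\theta,\gamma)$ and of $B(\gamma)$, glue them around a crack to build a competitor for $B_{BJ}(\theta)$ realizing cost $\le B(\theta,\gamma)+B(\gamma)-J_0(\gamma)+\eta$, and separately bound $\frac12J_1(\theta)+B_b(\theta)\le B(\theta,\gamma)+B(\gamma)-J_0(\gamma)$ using $B_b(\theta)\le\frac12J_1(\theta)$ and $\frac12J_1(\theta)\le B(\theta,\gamma)$. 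That $B_{IJ}>0$ is immediate from $B_{IJ}=2(B(\gamma)-J_0(\gamma))$, Lemma~\ref{lemma:NNN51}~(1) giving $B(\gamma)\ge\frac12J_1(\delta_1)$, and $J_0(\gamma)<0$ (which follows from $J_0(\gamma)\le J_0(\delta_1)\le J_1(\delta_1)+J_2(\delta_1)$ and the sign hypotheses recorded elsewhere, or is already noted in \cite{NNN}).

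\textbf{Step 3: the strict inequality.} Assume the stated condition: $\frac12J_1(\gamma)+J_2\bigl(\frac{t+\gamma}{2}\bigr)\le0$ for all $t$ with $J_1(t)<J_1(\theta)+2\eta_\theta$. The idea is to show $B_b(\theta)<\frac12J_1(\theta)$ strictly under this hypothesis — then $B_{BJ}(\theta)=\frac12J_1(\theta)+B_b(\theta)+B(\gamma)-2J_0(\gamma)<J_1(\theta)+B(\gamma)-2J_0(\gamma)$, and combined with $B(\theta,\gamma)\ge\frac12J_1(\theta)+(\text{strict gap from boundary-layer relaxation})$ one closes the strict inequality $B(\theta,\gamma)<B_{BJ}(\theta)$. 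Concretely, in the $k=1$ competitor for $B_b(\theta)$ in \eqref{fracture:bb} one adds one interaction block: picking $v^{k}$, $v^{k+1}$ with $v^{k+1}=0$, $v^{k+1}-v^k=\theta$ and $v^{k-1}$ so that $v^k-v^{k-1}=\gamma$, the added cost over the trivial $k=0$ choice is essentially $J_2\bigl(\frac{v^{k+1}-v^{k-1}}{2}\bigr)+\frac12J_1(v^k-v^{k-1})+\frac12J_1(v^{k+1}-v^k)-J_0(\gamma)-\frac12J_1(\theta)$, and using the sign hypothesis on $\frac12J_1(\gamma)+J_2(\cdot)$ this is $<0$, producing a strict improvement. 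I would make this rigorous by exhibiting one explicit competitor and computing; arbitrariness of the approximation parameter $\eta$ then yields the strict bound.

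\textbf{Main obstacle.} The delicate point is Step~1 / Step~3: getting the comparison $B(\theta,\gamma)\le B_{BJ}(\theta)$ (and especially its \emph{strict} version) right requires careful handling of the infinite-chain minimum problems \eqref{eq:elasticboundary}, \eqref{fracture:bb}, \eqref{fracture:b} — one must produce admissible test sequences for one functional out of near-minimizers of the others while tracking the boundary constraints $v^1-v^0=\theta$, $v^0=0$, the asymptotic conditions at $+\infty$, and the relaxation/convexification effects encoded in $J_0^{**}$ versus $J_0$. The sign hypothesis in Step~3 is precisely what guarantees that inserting an extra ``relaxed'' block strictly lowers $B_b(\theta)$ below its trivial upper bound $\frac12 J_1(\theta)$, so the strictness is genuinely an input, not automatic. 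Everything else is bookkeeping with the already-available Lemmas~\ref{lemma:NNN51} and \ref{lemma:B}.
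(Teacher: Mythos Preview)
Your proposal has genuine gaps in all three steps, and in Step~3 a directional error.

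For Steps~1 and~2, the crude bounds from Lemma~\ref{lemma:NNN51} and Lemma~\ref{lemma:B} simply do not close. In Step~2 you reduce to $J_1(\theta)\le B(\theta,\gamma)+B(\gamma)$; since $B(\theta,\gamma)\ge\tfrac12 J_1(\theta)$ this would need $\tfrac12 J_1(\theta)\le B(\gamma)$, but $B(\gamma)\le\tfrac12 J_1(\gamma)<0$ while $J_1(\theta)$ can be large and positive for small $\theta$. Your claim that $B_{IJ}>0$ is ``immediate'' likewise relies on $J_0(\gamma)<J_1(\delta_1)$, which uses $J_2(\delta_1)<0$ --- an extra hypothesis \emph{not} assumed in this lemma (it only enters in Lemma~\ref{lemma:bifj}). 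The paper avoids all of this by a soft argument you are missing: since $H_1^\ell$ is a $\Gamma$-limit, it is $L^1$-lower semicontinuous. Taking $u_0^{(1)}=u_1^{(1)}=\theta$ and comparing $H_1^\ell(u)$ for $u$ with $S_u=\{0\}$ against $\liminf H_1^\ell(u_n)$ for sequences $(u_n)\to u$ with $S_{u_n}=\{1/n\}$, respectively $S_{v_n}=\{0,1\}$ and $[v_n](1)\to0$, yields both inequalities in \eqref{ineq:bbj<bij} directly from the formula \eqref{def:limitfrac}. Similarly, $B_{IJ}>0$ follows because a sequence $(w_n)$ with $\#S_{w_n}=n$ and $w_n\to\ell x$ would have bounded energy if $B_{IJ}\le0$, contradicting $H_1^\ell(\ell x)=+\infty$.

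In Step~3 your strategy is to show $B_b(\theta)<\tfrac12 J_1(\theta)$ strictly. But this yields a strict \emph{upper} bound on $B_{BJ}(\theta)$, which is useless for proving $B(\theta,\gamma)<B_{BJ}(\theta)$; you need either a lower bound on $B_{BJ}(\theta)$ or an upper bound on $B(\theta,\gamma)$. The paper does the latter: take a near-minimizer $(v^i)_{i=0}^{k+1}$ of $B_b(\theta)$ with $v^{k+1}=0$, $v^{k+1}-v^k=\theta$, reverse it to $u^i:=-v^{k+1-i}$ and extend by $u^{i+1}-u^i=\gamma$ for $i\ge k+1$. This $u$ is admissible for $B(\theta,\gamma)$; the only new interaction block beyond what $B_b(\theta)$ accounts for involves $J_2\bigl(\tfrac{\gamma+(v^1-v^0)}{2}\bigr)+\tfrac12 J_1(\gamma)$, and it is here --- with $t=v^1-v^0$ satisfying $J_1(t)<J_1(\theta)+2\eta$ from the near-optimality of $v$ --- that the sign hypothesis is used to make this contribution $\le0$. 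The computation gives $B(\theta,\gamma)\le B_{BJ}(\theta)-(\tfrac12 B_{IJ}-\eta)$, and choosing $\eta<\min\{\eta_\theta,\tfrac12 B_{IJ}\}$ yields the strict inequality.
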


\begin{proof}
Let $\ell>\gamma$ and $u_0^{(1)}=u_1^{(1)}=\theta$. The inequalities of (\ref{ineq:bbj<bij}) and $B_{IJ}>0$ follow from the lower semicontinuity of $H_1^\ell$ given in (\ref{def:limitfrac}). Indeed, by the properties of the $\Gamma$-limit, we deduce that $H_1^\ell$ is lower semicontinuous with respect to the strong $L^1(0,1)$--topology, see e.g.\ \cite[Proposition 1.28]{Bbeg}. Let $u\in SBV_c^\ell(0,1)$ be such that $S_u=\{0\}$. Furthermore, define $(u_n),(v_n)\subset SBV_c^\ell(0,1)$ such that $S_{u_n}=\left\{\frac{1}{n}\right\}$ and $S_{v_n}\subset\{0,1\}$ with $[v_n](1)=\frac{\ell-\gamma}{n}$. Note that $u$, $u_n$ and $v_n$ with $n\in\N$, $n\geq1$ are uniquely defined. Since, $(u_n)$ and $(v_n)$ converge strongly in $L^1(0,1)$ to $u$, we deduce from the lower semicontinuity of $H_1^\ell$:
\begin{equation*}
 \begin{split}
   B(\theta,\gamma)+B_{BJ}(\theta)-J_0(\gamma)=&H_1^\ell(u)\leq \liminf_{n\to\infty}H_1^\ell(u_n)\leq2B(\theta,\gamma)+B_{IJ}-J_0(\gamma),\\
 B(\theta,\gamma)+B_{BJ}(\theta)-J_0(\gamma)=&H_1^\ell(u)\leq\liminf_{n\to\infty}H_1^\ell(v_n)\leq 2B_{BJ}(\theta)-J_0(\gamma).
 \end{split}
\end{equation*}
Hence, (\ref{ineq:bbj<bij}) is proven. Let us show $B_{IJ}>0$. Similarly to the upper bound in the zeroth-order $\Gamma$-limit (Theorem~\ref{theorem:zero}), we can construct a sequence $(w_n)\subset SBV^\ell_c(0,1)$ such that $\#S_{w_n}=n$ and $w_n\to u$ in $L^1(0,1)$ with $u(x)=\ell x$. If we assume on the contrary that $B_{IJ}\leq 0$, we had $\sup_n H_1^\ell(w_n)\leq C$ but $H_1^\ell(u)=+\infty$ since $u\notin SBV_c^\ell(0,1)$ for $\ell>\gamma$, which was a contradiction to the lower semicontinuity of $H_1^\ell$. Thus $B_{IJ}>0$. \\
Next, we prove $B(\theta,\gamma)<B_{BJ}(\theta)$ under the additional assumption. Let $\eta>0$ be such that $\eta<\eta_\theta$ and $\frac12B_{IJ}-\eta>0$. We show $B_{BJ}(\theta)-(\frac12B_{IJ}-\eta)\geq B(\theta,\gamma)$, which clearly proves $B(\theta,\gamma)<B_{BJ}(\theta)$. By the definition of $B_b(\theta)$, see (\ref{fracture:bb}), there exists $k\in\N$ and $(v^i)_{i=0}^{k+1}\in\R^{k+2}$ such that $v^{k+1}=0$ and $v^k=-\theta$ with
\begin{align*}
 B_b(\theta)+\eta\geq&\frac12J_1(v^1-v^0)+\sum_{i=0}^{k-1}\bigg\{J_2\left(\frac{v^{i+2}-v^i}{2}\right)+\frac{1}{2}J_1(v^{i+2}-v^{i+1})\\
&+\frac{1}{2} J_1(v^{i+1}-v^i)-J_0(\gamma)\bigg\}.
\end{align*}
By the upper bound $B_b(\theta)\leq \frac12 J_1(\theta)$, see Lemma~\ref{lemma:B}, and the fact that the terms in the above sum are non-negative, we deduce $J_1(v^1-v^0)\leq J_1(\theta)+2\eta$. Let us define the sequence $u=(u^i)_{i=0}^\infty$ by $u^i=-v^{k+1-i}$ for $i\in\{0,..., k+1\}$ and $u^{i+1}-u^i=\gamma$ for $i\geq k+1$. Since the sequence $u$ is a competitor for the minimum problem which defines $B(\theta,\gamma)$, see (\ref{eq:elasticboundary}), we have
\begin{align*}
 &B(\theta,\gamma)\\
&\leq\frac12J_1(u^1-u^0)+\sum_{i\geq0}\bigg\{J_2\left(\frac{u^{i+2}-u^i}{2}\right)+\frac12J_1(u^{i+2}-u^{i+1})\\
&~+\frac12J_1(u^{i+1}-u^i)-J_0(\gamma)\bigg\}\\
&=\frac12J_1(\theta)+\sum_{i=0}^{k-1}\left\{J_2\left(\frac{v^{i+2}-v^i}{2}\right)+\frac{1}{2}J_1(v^{i+2}-v^{i+1})+\frac{1}{2} J_1(v^{i+1}-v^i)-J_0(\gamma)\right\}\\
&~+J_2\left(\frac{\gamma+v^1-v^0}{2}\right)+\frac12J_1(v^1-v^0)+\frac12J_1(\gamma)-J_0(\gamma)\\
&\leq\frac12J_1(\theta)+B_b(\theta)+\eta-J_0(\gamma)= B_{BJ}(\theta)+\eta-(B(\gamma)-J_0(\gamma))\\
&=B_{BJ}(\theta)-\left(\frac12B_{IJ}-\eta\right),
\end{align*}
where we used $\frac12J_1(\gamma)+J_2\left(\frac{v^1-v^0+\gamma}{2}\right)\leq0$. 
\end{proof}

As a direct consequence of Lemma~\ref{lem:bbjbij}, we have the following result about the minimizers and minimal energies of $H_1^\ell$, which extends in some sense the results of \cite[Theorem 5.1]{NNN}. We prove that there exists no choice for $u_0^{(1)},u_1^{(1)}>0$ such that an internal jump has strictly less energy than a jump at the boundary. However, note that for special values of $u_0^{(1)},u_1^{(1)}>0$ the energies can be the same.
\begin{proposition}
  Suppose that hypotheses [LJ1]--[LJ4] hold. Let $\ell>\gamma$. For any $u_0^{(1)},u_1^{(1)}>0$ it holds 
\begin{equation}\label{minh1l}
\min_u H_1^\ell(u)=\min\left\{B_{BJ}\left(u_0^{(1)}\right)+B\left(u_1^{(1)},\gamma\right),B_{BJ}\left(u_1^{(1)}\right)+B\left(u_0^{(1)},\gamma\right)\right\}-J_0(\gamma).
\end{equation}
\end{proposition}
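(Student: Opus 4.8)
The idea is to use the explicit formula for $H_1^\ell$ from Theorem~\ref{th:fracturennn}, together with the compactness statement of Proposition~\ref{prop:compact}, to reduce the minimization to a finite comparison of the allowed jump configurations. By Proposition~\ref{prop:compact}, any $u$ with $H_1^\ell(u)<+\infty$ lies in $SBV_c^\ell(0,1)$, so $0<\#S_u<+\infty$ and, writing $j_0=\#(S_u\cap\{0\})\in\{0,1\}$, $j_1=\#(S_u\cap\{1\})\in\{0,1\}$, $j_i=\#(S_u\cap(0,1))\geq 0$, the value of $H_1^\ell(u)$ depends only on $(j_0,j_1,j_i)$ via
\[
H_1^\ell(u)=B(u_0^{(1)},\gamma)(1-j_0)+B(u_1^{(1)},\gamma)(1-j_1)-J_0(\gamma)+B_{BJ}(u_0^{(1)})j_0+B_{BJ}(u_1^{(1)})j_1+B_{IJ}\,j_i,
\]
and we need $j_0+j_1+j_i\geq 1$. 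Thus $\min_u H_1^\ell(u)$ equals the minimum of this expression over all such admissible triples.

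First I would record that, since $B_{IJ}>0$ by Lemma~\ref{lem:bbjbij}, it is never advantageous to have $j_i\geq 1$ unless forced; and since $j_0+j_1+j_i\geq 1$ is required, the minimum is attained either at $(j_0,j_1,j_i)=(1,0,0)$, or $(0,1,0)$, or $(0,0,1)$ — the three ``single-crack'' configurations. Comparing the first two with the third amounts exactly to comparing
\[
B_{BJ}(u_0^{(1)})+B(u_1^{(1)},\gamma)\quad\text{and}\quad B_{BJ}(u_1^{(1)})+B(u_0^{(1)},\gamma)\quad\text{against}\quad B(u_0^{(1)},\gamma)+B(u_1^{(1)},\gamma)+B_{IJ},
\]
all with the common additive constant $-J_0(\gamma)$. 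The left inequality $B_{BJ}(\theta)\leq B(\theta,\gamma)+B_{IJ}$ of Lemma~\ref{lem:bbjbij}, applied with $\theta=u_0^{(1)}$ (resp. $u_1^{(1)}$), shows that the first (resp. second) configuration is no worse than the internal-jump configuration; hence the internal jump never gives the strict minimum, and the minimum over single cracks is the minimum of the first two quantities. It remains to check that adding further jumps, i.e. any triple with $j_0+j_1+j_i\geq 2$, only increases the energy: this follows by a monotonicity argument, using $B_{IJ}>0$ and the inequality $B(\theta,\gamma)\leq B_{BJ}(\theta)$ of Lemma~\ref{lem:bbjbij}, which shows that replacing a boundary crack by an elastic boundary layer (i.e. lowering $j_0$ or $j_1$) decreases the energy, so the energy is minimized among configurations with exactly one crack.

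The main obstacle is the bookkeeping in the last step: one must verify that every admissible configuration $(j_0,j_1,j_i)$ with total at least two can be dominated, term by term, by one of the two single-boundary-crack configurations, using only $B_{IJ}>0$ and $B(\theta,\gamma)\leq B_{BJ}(\theta)$. Concretely, from an arbitrary admissible $u$ one passes to the configuration $(1,0,0)$ (if $j_0=1$) or $(0,1,0)$ (if $j_0=0$, so necessarily $j_1=1$ or $j_i\geq1$; in the latter sub-case one first notes $B_{IJ}\geq B_{BJ}(\theta)-B(\theta,\gamma)$ lets one trade an internal jump for a boundary jump without increasing energy), dropping all superfluous jumps and paying $B(u_j^{(1)},\gamma)$ instead of $B_{BJ}(u_j^{(1)})$ at any de-activated boundary — each such replacement is non-increasing. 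This reduces every case to the two quantities on the right-hand side of \eqref{minh1l}, and since those two configurations are themselves admissible (they lie in $SBV_c^\ell(0,1)$), the minimum is exactly their minimum, which is the claimed identity.
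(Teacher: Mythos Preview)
Your proposal is correct and follows essentially the same route as the paper: both arguments use the inequalities $B(\theta,\gamma)\leq B_{BJ}(\theta)\leq B(\theta,\gamma)+B_{IJ}$ and $B_{IJ}>0$ from Lemma~\ref{lem:bbjbij} to first eliminate internal jumps (trading each for a boundary jump at no cost) and then eliminate a second boundary jump (replacing $B_{BJ}$ by the smaller $B(\cdot,\gamma)$), leaving exactly the two single-boundary-crack configurations. Your write-up is simply a more explicit version of the paper's terse two-line proof.
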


\begin{proof}
 From $B_{BJ}(\theta)\leq B(\theta,\gamma)+B_{IJ}$ for all $\theta>0$, see Lemma~\ref{lem:bbjbij} and the formula for $H_1^\ell$ in (\ref{def:limitfrac}), it follows that no internal jump can has strictly less energy than a jump at the boundary. Hence, 
$$\min\left\{H_1^\ell(u):u\in SBV_c^\ell(0,1)\right\}=\min \left\{H_1^\ell(u):u\in SBV_c^\ell(0,1),S_u\subset\{0,1\}\right\},$$
which proves, using $B(\theta,\gamma)\leq B_{BJ}(\theta)$ (see (\ref{ineq:bbj<bij})), the assertion (\ref{minh1l}), cf.~(\ref{def:limitfrac}).
\end{proof}

Combining the previous results, we are able to give sufficient conditions on the representative atoms $\T=(\T_n)$ in order to ensure $\min_uH_1^\ell(u)=\min_u\H_1^{\ell,\T}(u)$. In plain terms, it is enough to make sure that the representative atoms $\T_n$ are such that $k_n^1+1, k_n^2-1\notin \T_n$ and for all $i,j\in\{k_n^1+1,....,k_n^2-1\}\cap \T_n$ it holds $|i-j|\geq2$. 

\begin{theorem}\label{5:cor}
 Let $u_0^{(1)},u_1^{(1)}>0$ and $\ell>\gamma$. Let $J_1,J_2$ satisfy [LJ1]--[LJ4], $J_1(\gamma),J_2(\gamma),J_2(\delta_1)<0$ and (\ref{ineq:chapter5a}). If $\T=(\T_n)$ satisfies (\ref{def:atoms}) and $b(x,\T)$, $\hat{l}(\T)$, $\hat{r}(\T)\geq 2$, see (\ref{def:B(x,T)}), (\ref{def:rl}), for all $x\in(0,1)$, then $\H_1^{\ell,\T}$ defined in (\ref{def:limitfracqc}) reads
\begin{equation}\label{def:limitfracljcoarse}
\H_1^{\ell,\T}(u)=H_1^\ell(u)-\sum_{x:x\in S_u\cap(0,1)}\left(b(x,\T)J_0(\gamma)+B_{IJ}\right)
\end{equation}
for $u\in SBV^\ell_c(0,1)$, and $+\infty$ else on $L^1(0,1)$. Moreover, for given $u_0^{(1)},u_1^{(1)}>0$
\begin{equation}
 \min_u\H_1^{\ell,\T}(u)=\min_u H_1^\ell(u).\label{min=}
\end{equation}
For $u\in \operatorname{argmin} \H_1^{\ell,\T}$, the jump set satisfies $S_u\subset\{0,1\}$. If furthermore $J_1$ and $J_2$ satisfy all assumptions of Lemma~\ref{lem:bbjbij}, it holds $\#S_u=1$.
\end{theorem}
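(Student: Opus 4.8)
The plan is to deduce the explicit formula \eqref{def:limitfracljcoarse} directly from the general $\Gamma$-limit formula \eqref{def:limitfracqc} of Theorem~\ref{theorem:fracture} by evaluating the coefficient $B_{IFJ}(n,k,\theta)$ at the admissible choices of parameters, and then compare the minimal energies. First I would observe that under the hypothesis $b(x,\T),\hat r(\T),\hat l(\T)\ge 2$ the relevant arguments $n,k$ in $B_{IFJ}(n,k,\theta)$ all satisfy $n\ge 2$, $k\ge 2$. By Lemma~\ref{lemma:bifj}(iii) we then have $\tilde B_{IFJ}(n,k)=B_{AIF}(n)$, and by Lemma~\ref{theorem:bif} (which applies since \eqref{ineq:chapter5a} is assumed) $B_{AIF}(n)=B_{IJ}$ for $n\ge 2$. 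Hence \eqref{def:bifjxint} becomes
\[
 B_{IFJ}(n,k,\theta)=\min\left\{B_{IJ}+B(\theta,\gamma),\,B_{BJ}(\theta)\right\}.
\]
By Lemma~\ref{lem:bbjbij}, $B_{BJ}(\theta)\le B(\theta,\gamma)+B_{IJ}$, so the minimum equals $B_{BJ}(\theta)$. Substituting this into \eqref{def:limitfracqc}, and splitting the term $-\sum_{x\in S_u\cap(0,1)}b(x,\T)J_0(\gamma)$ as $\sum_{x\in S_u\cap(0,1)}\bigl(-b(x,\T)J_0(\gamma)-B_{IJ}\bigr)+B_{IJ}\#(S_u\cap(0,1))$, one recognizes exactly $H_1^\ell(u)$ from \eqref{def:limitfrac} plus the correction $-\sum_{x\in S_u\cap(0,1)}(b(x,\T)J_0(\gamma)+B_{IJ})$, which is \eqref{def:limitfracljcoarse}. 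I would verify the bookkeeping of the boundary terms $B(u_0^{(1)},\gamma)$, $B(u_1^{(1)},\gamma)$ and the single $-J_0(\gamma)$ carefully, matching the indicator structure $(1-\#(S_u\cap\{0\}))$ etc.\ against \eqref{def:limitfrac}.

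Next, for the identity of minima \eqref{min=}: since $B_{IJ}>0$ (Lemma~\ref{lem:bbjbij}) and $J_0(\gamma)<0$ (from $J_1(\gamma),J_2(\gamma)<0$, so $J_0(\gamma)\le J_{CB}(\gamma)<0$), each internal jump contributes $-b(x,\T)J_0(\gamma)-B_{IJ}$ to $\H_1^{\ell,\T}-H_1^\ell$; but more importantly I should compare the two functionals over their whole domains. The domain of both is $SBV_c^\ell(0,1)$, on which $H_1^\ell\le \H_1^{\ell,\T}$ would need checking — actually the sign of $-b(x,\T)J_0(\gamma)-B_{IJ}$ is ambiguous, so instead I argue at the level of minimizers. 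For a configuration $u$ with $S_u\subset\{0,1\}$ the correction term vanishes, hence $\H_1^{\ell,\T}(u)=H_1^\ell(u)$ there. So it suffices to show that $\min_u\H_1^{\ell,\T}$ is attained by such a $u$, i.e.\ that adding an internal jump (or moving a boundary jump inside) never strictly lowers $\H_1^{\ell,\T}$. Comparing the cost of a jump at the boundary, $B_{BJ}(u_0^{(1)})-J_0(\gamma)$ (net extra cost over the elastic boundary layer $B(u_0^{(1)},\gamma)$), with the cost of an internal jump, $-b(x,\T)J_0(\gamma)$: the latter is minimized over admissible $\T$ with $b\ge2$ by $b=2$, giving $-2J_0(\gamma)=B_{IJ}$ (using $B_{AIF}=B_{IJ}=2B(\gamma)-2J_0(\gamma)$ — wait, $B_{IJ}=2B(\gamma)-2J_0(\gamma)$, not $-2J_0(\gamma)$; I must keep $-b(x,\T)J_0(\gamma)$ as is). The key inequality to establish is that an internal jump costs at least as much as relocating it to the boundary, i.e.\ $-b(x,\T)J_0(\gamma)\ge B_{BJ}(\theta)-B(\theta,\gamma)$ is not quite right either; rather I use that in $\H_1^{\ell,\T}$ the boundary jump energy is $B_{BJ}(\theta)$ and the internal jump energy is $-b(x,\T)J_0(\gamma)\ge -2J_0(\gamma)$, while $B_{BJ}(\theta)\le B(\theta,\gamma)+B_{IJ}=B(\theta,\gamma)+2B(\gamma)-2J_0(\gamma)$; since $B(\theta,\gamma)\le B_{BJ}(\theta)$ and $B(\gamma)\ge\frac12 J_1(\delta_1)>0$... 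I will need to show $B_{BJ}(\theta)-B(\theta,\gamma)\le -2J_0(\gamma)$, which should follow from $B_{BJ}(\theta)\le B(\theta,\gamma)+B_{IJ}$ together with $B_{IJ}\le -2J_0(\gamma)$, the latter being equivalent to $B(\gamma)\le 0$ — true since $B(\gamma)\le\frac12 J_1(\gamma)<0$ by Lemma~\ref{lemma:NNN51}(1) and $J_1(\gamma)<0$. This gives that a boundary jump (replacing the elastic boundary layer) is never more expensive than an internal one, so some minimizer has $S_u\subset\{0,1\}$, whence $\min\H_1^{\ell,\T}=\min H_1^\ell$.

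Finally, for the location and number of jumps of a minimizer $u\in\operatorname{argmin}\H_1^{\ell,\T}$: the computation above shows that any $u$ with an internal jump can be replaced by one with the jump moved to a boundary point without increasing the energy, and strictly decreasing it unless equality holds throughout; in any case $\operatorname{argmin}$ contains configurations with $S_u\subset\{0,1\}$, and since $\H_1^{\ell,\T}=H_1^\ell$ on such configurations, every minimizer of $\H_1^{\ell,\T}$ with $S_u\subset\{0,1\}$ is a minimizer of $H_1^\ell$; conversely, because the correction term $-\sum(b(x,\T)J_0(\gamma)+B_{IJ})$ is $\ge 0$ (as $-b(x,\T)J_0(\gamma)\ge -2J_0(\gamma)\ge B_{IJ}$, shown above) any minimizer must in fact have no internal jump, i.e.\ $S_u\subset\{0,1\}$. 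Under the extra hypotheses of Lemma~\ref{lem:bbjbij}, $B(\theta,\gamma)<B_{BJ}(\theta)$ strictly, so placing jumps at \emph{both} endpoints costs $B_{BJ}(u_0^{(1)})+B_{BJ}(u_1^{(1)})-J_0(\gamma)$, strictly more than $B_{BJ}(u_0^{(1)})+B(u_1^{(1)},\gamma)-J_0(\gamma)$ (and the symmetric expression); hence exactly one of the two boundary points carries the jump, $\#S_u=1$, matching \eqref{minh1l}. The main obstacle I anticipate is the careful sign analysis showing $-b(x,\T)J_0(\gamma)\ge B_{IJ}$ for all admissible $b\ge2$ — i.e.\ $-2J_0(\gamma)\ge 2B(\gamma)-2J_0(\gamma)$, equivalently $B(\gamma)\le0$ — together with assembling the indicator/boundary-term bookkeeping in \eqref{def:limitfracqc} correctly; the rest is substitution using Lemmas~\ref{lemma:bifj},~\ref{theorem:bif},~\ref{lem:bbjbij}.
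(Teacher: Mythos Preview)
Your proposal is correct and follows essentially the same route as the paper: you reduce $B_{IFJ}(n,k,\theta)$ to $B_{BJ}(\theta)$ via Lemmas~\ref{lemma:bifj}(iii), \ref{theorem:bif} and \ref{lem:bbjbij}, and then compare the boundary-jump cost to the internal-jump cost to conclude $S_u\subset\{0,1\}$ and \eqref{min=}. The only noteworthy variation is how you obtain the key strict inequality $-2J_0(\gamma)>B_{IJ}$: the paper gets it from the $\tilde B_{IFJ}(2,2)$ chain in \eqref{b(x,T)>0}, whereas you deduce it directly from $B_{IJ}=2B(\gamma)-2J_0(\gamma)$ together with $B(\gamma)\le\tfrac12 J_1(\gamma)<0$ (Lemma~\ref{lemma:NNN51}(1))---your route is slightly more elementary and equally valid.
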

\begin{proof}
Let us first prove (\ref{def:limitfracljcoarse}). By the definition of $H_1^\ell$ and $\H_1^{\ell,\T}$, see (\ref{def:limitfrac}), (\ref{def:limitfracqc}), we have to show $ B_{IFJ}(\hat r(\T),b(0,\T),u_0^{(1)})=B_{BJ}(u_0^{(1)})$ and $B_{IFJ}(\hat{l}(\T),b(1,\T),u_1^{(1)})=B_{BJ}(u_1^{(1)})$. By Lemma~\ref{theorem:bif}, we have $B_{AIF}(n)=B_{IJ}$, for $n\geq 2$. Hence, we have for $B_{IFJ}(n,k,\theta)$, defined in (\ref{def:bifjxint}), with $n,k\geq2$ and $\theta>0$ by Lemma~\ref{lemma:bifj} (iii)  and inequality (\ref{ineq:bbj<bij}) that
\begin{equation*}
  B_{IFJ}(n,k,\theta)=\min\left\{B_{AIF}(n)+B(\theta,\gamma),B_{BJ}(\theta)\right\}=B_{BJ}(\theta).
\end{equation*}
Hence, by  $b(x,\T),\hat{l}(\T),\hat{r}(\T)\geq 2$, for all $x\in(0,1)$ the assertion (\ref{def:limitfracljcoarse}) is proven.\\
From $J_0(\gamma)<0$, Lemma~\ref{lemma:bifj} (iii), Lemma~\ref{theorem:bif} and Lemma~\ref{lem:bbjbij}, we deduce that
\begin{equation}\label{b(x,T)>0}
 -b(x,\T)J_0(\gamma)\geq -2J_0(\gamma)>\tilde B_{IFJ}(2,2)=B_{AIF}(2)=B_{IJ}>0
\end{equation}
for all $x\in(0,1)$. Combining (\ref{b(x,T)>0}) with (\ref{ineq:bbj<bij}), we obtain that $B_{BJ}(\theta)<B(\theta,\gamma)-2J_0(\gamma)$ for all $\theta>0$. Hence, the jump set $S_u$ of minimizers $u$ of $\H_1^{\ell,\T}$ satisfies $S_u\subset\{0,1\}$ and by (\ref{ineq:bbj<bij})--(\ref{def:limitfracljcoarse})
\begin{align*}
 \min_u \H_1^{\ell,\T}(u)=&\min\left\{B_{BJ}\left(u_0^{(1)}\right)+B\left(u_1^{(1)},\gamma\right),B_{BJ}\left(u_1^{(1)}\right)+B\left(u_0^{(1)},\gamma\right)\right\}-J_0(\gamma)\\
=&\min_u H_1^{\ell}(u).
\end{align*}
If $J_1$ and $J_2$ are such that $B(\theta,\gamma)<B_{BJ}(\theta)$ for all $\theta>0$, see Lemma~\ref{lem:bbjbij}, we obtain from the above equation that every minimizer $u$ of $\H_1^{\ell,\T}$ satisfies $\#S_u=1$.
\end{proof}

In the next theorem which is based on the previous $\Gamma$-convergence statements, we deduce a convergence result for the difference between the minimal energies of the fully atomistic model and the quasicontinuum model.  
\begin{theorem}\label{th:limmin}
 Let $u_0^{(1)},u_1^{(1)}>0$, $\ell>0$ and let $k_n^1,k_n^2$ satisfy \eqref{ass:kn}. Let $J_1$, $J_2$ and $(\T_n)$ satisfy the assumptions of Theorem~\ref{theorem:elastic} and, if $\ell>\gamma$, also the additional assumptions of Theorem~\ref{theorem:fracture} and Theorem~\ref{5:cor} such that \eqref{min=} is valid. Then it holds
\begin{equation}\label{lim:discretemin}
 \inf_u H_n^\ell(u)-\inf_u \H_n^{\ell,k_n,\T_n}(u)=o(\lambda_n),
\end{equation}
as $n\to\infty$.
\end{theorem}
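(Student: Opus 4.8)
The plan is to transfer the statement to the rescaled first-order functionals and then apply the convergence-of-minima property of $\Gamma$-convergence. By the definition \eqref{def:hn1qc} of $\H_{1,n}^{\ell,k_n,\T_n}$ and the analogous definition of $H_{1,n}^\ell$, both $H_n^\ell$ and $\H_n^{\ell,k_n,\T_n}$ equal $\lambda_n$ times the corresponding first-order functional plus the same constant $\min_v H^\ell(v)=J_0^{**}(\ell)$; since $\lambda_n>0$, taking infima yields
\[
 \inf_u H_n^\ell(u)-\inf_u \H_n^{\ell,k_n,\T_n}(u)=\lambda_n\Big(\inf_u H_{1,n}^\ell(u)-\inf_u \H_{1,n}^{\ell,k_n,\T_n}(u)\Big).
\]
Thus it suffices to show $\inf_u H_{1,n}^\ell(u)-\inf_u \H_{1,n}^{\ell,k_n,\T_n}(u)\to0$ as $n\to\infty$.

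First I would prove $\lim_{n\to\infty}\inf_u \H_{1,n}^{\ell,k_n,\T_n}(u)=\min_u \H_1^{\ell,\T}(u)$, where $\H_1^{\ell,\T}$ denotes the first-order $\Gamma$-limit: for $\ell\le\gamma$ this is the functional of Theorem~\ref{theorem:elastic} (which coincides with $H_1^\ell$) with the $L^\infty(0,1)$-topology, and for $\ell>\gamma$ the functional of Theorem~\ref{theorem:fracture} with the $L^1(0,1)$-topology. The limit functional attains its minimum: for $\ell\le\gamma$ it is finite only at $u(t)=\ell t$, while for $\ell>\gamma$ it depends only on the integers $\#(S_u\cap\{0\})$, $\#(S_u\cap\{1\})$ and $\sum_{x\in S_u\cap(0,1)}b(x,\T)$, each jump contributing a strictly positive amount under the present hypotheses (cf.\ Remark~\ref{rem:theoremfracture}), so the infimum over $SBV_c^\ell(0,1)$ is realised. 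Taking a minimiser $u_*$ and the recovery sequence for it supplied by the limsup part of Theorem~\ref{theorem:elastic}/\ref{theorem:fracture} yields $\limsup_n\inf_u\H_{1,n}^{\ell,k_n,\T_n}\le\min_u\H_1^{\ell,\T}$, and in particular $\sup_n\inf_u\H_{1,n}^{\ell,k_n,\T_n}<\infty$. For the reverse inequality choose $(u_n)$ with $\H_{1,n}^{\ell,k_n,\T_n}(u_n)\le\inf_u\H_{1,n}^{\ell,k_n,\T_n}(u)+\tfrac1n$; by the previous bound this sequence has equibounded energy, so Proposition~\ref{prop:compact} gives a subsequence converging to some admissible $u$, and the liminf part of Theorem~\ref{theorem:elastic}/\ref{theorem:fracture} gives $\liminf_n\H_{1,n}^{\ell,k_n,\T_n}(u_n)\ge\H_1^{\ell,\T}(u)\ge\min_u\H_1^{\ell,\T}$.

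Running the same argument for $H_{1,n}^\ell$, using its $\Gamma$-convergence to $H_1^\ell$ (Theorem~\ref{theorem:elastic} for $\ell\le\gamma$, Theorem~\ref{th:fracturennn} for $\ell>\gamma$) and the fully atomistic analogue of Proposition~\ref{prop:compact} from \cite[Proposition 4.1]{NNN}, gives $\lim_n\inf_u H_{1,n}^\ell=\min_u H_1^\ell$. The two limiting minima then agree: for $\ell\le\gamma$ the two $\Gamma$-limits are literally the same functional, and for $\ell>\gamma$ this is precisely the identity \eqref{min=}, which is guaranteed by the assumptions of Theorem~\ref{5:cor} imposed in the statement. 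Hence $\inf_u H_{1,n}^\ell(u)-\inf_u\H_{1,n}^{\ell,k_n,\T_n}(u)\to\min_u H_1^\ell-\min_u\H_1^{\ell,\T}=0$, and by the first paragraph this is \eqref{lim:discretemin}.

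The step I expect to be the main obstacle is the equi-coercivity used in the lower bounds on the infima, that is, excluding that almost-minimising sequences escape to non-compact ``fractured'' configurations; this is exactly what Proposition~\ref{prop:compact} and its fully atomistic counterpart deliver, once one has checked that the infima are bounded above uniformly in $n$ (which the recovery sequences provide). Everything else is the standard convergence-of-minima consequence of $\Gamma$-convergence together with equi-coercivity, and the passage between the $L^\infty$- and $L^1$-topologies in the two cases $\ell\le\gamma$ and $\ell>\gamma$ causes no additional difficulty.
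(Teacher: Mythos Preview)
Your proof is correct and follows essentially the same route as the paper's own argument: reduce \eqref{lim:discretemin} to convergence of the infima of the first-order functionals, then invoke $\Gamma$-convergence (Theorem~\ref{theorem:elastic}, Theorem~\ref{th:fracturennn}, Theorem~\ref{theorem:fracture}) together with the equi-coercivity supplied by Proposition~\ref{prop:compact} (and its fully atomistic analogue in \cite{NNN}) to obtain $\lim_n\inf H_{1,n}^\ell=\min H_1^\ell$ and $\lim_n\inf\H_{1,n}^{\ell,k_n,\T_n}=\min\H_1^{\ell,\T}$, and finally identify the two limiting minima via Theorem~\ref{theorem:elastic} for $\ell\le\gamma$ and via \eqref{min=} for $\ell>\gamma$. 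The only cosmetic difference is that the paper packages the convergence-of-minima step by citing the abstract results \cite[Theorem~4.4]{BT} and \cite[Theorem~1.47]{Bbeg}, whereas you spell out the standard ``recovery sequence for the upper bound, almost-minimisers plus compactness for the lower bound'' argument explicitly.
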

\begin{proof}
 Let us first note that the functionals $H_n^\ell$, $\H_n^{\ell,k_n,\T_n}$ are equi-coercive in $L^1(0,1)$, which follows by the compactness argument in the proof of Theorem~\ref{theorem:zero}. Moreover, by Proposition \ref{prop:compact} the functionals $H_{1,n}^\ell$, $\H_{1,n}^{\ell,k_n,\T_n}$ are equi-coercive. In the case $0<\ell\leq\gamma$, Theorem \ref{theorem:elastic} ensures that $H_n^\ell$ and $\H_n^{\ell,k_n,\T_n}$ are $\Gamma$-equivalent at order $\lambda_n$, see \cite[Definition 4.2]{BT}, and \eqref{lim:discretemin} follows from \cite[Theorem 4.4]{BT}. Similarly, if $\gamma<\ell$, we deduce from Theorem~\ref{theorem:zero} and Theorem~\ref{theorem:fracture}  
\begin{align*}
\inf_u \H_n^{\ell,k_n,\T_n}(u)=&\inf_u H^\ell(u)+\lambda_n\inf_u \hat H_1^{\ell,\T}(u)+o(\lambda_n),
\end{align*}
see \cite[Theorem 1.47]{Bbeg}. Further, by \eqref{min=} and Theorem~\ref{th:fracturennn}, we obtain
\begin{align*}
\inf_u \H_n^{\ell,k_n,\T_n}(u)=\inf_u H^\ell(u)+\lambda_n\inf_u H_1^\ell (u)+o(\lambda_n)=\inf_u H_n^\ell(u)+o(\lambda_n). 
\end{align*}
\end{proof}

In the next proposition, we show that the sufficient conditions of Theorem \ref{5:cor} are sharp. Therefore, we show for a particular choice of $u_0^{(1)},u_1^{(1)}>0$ that if the representative atoms are not chosen as in the above theorem, neither the minimal energy nor the minimizer of $\H_1^{\ell,\T}$ coincide with the ones of $H_1^\ell$.
\begin{proposition}\label{lemma:fracture}
Let $\ell>\gamma$, $u_0^{(1)}=\delta_1$ and $u_1^{(1)}=\gamma$. Let $J_1,J_2$ satisfy [LJ1]--[LJ4]. Then it holds for $H_1^\ell$ 
\begin{equation}\label{min:H1l}
 \min_u H_1^\ell(u)=B_{BJ}(\delta_1)+B(\gamma,\gamma)-J_0(\gamma),
\end{equation}
and the unique minimizer $u$ satisfies $S_u=\{0\}$. Let $J_1,J_2$ satisfy the assumptions of Theorem~\ref{5:cor} and $J_2(\gamma)>2J_2\left(\frac{\delta_1+\gamma}{2}\right)$. Then the following assertions hold true:
\begin{itemize}
 \item[(a)] Let $\T^1=(\T_n^1)$ be such that there exists $z\in[0,1]$ with $b(z,\T^1)=1$. Then $\min_u \H_{1}^{\ell,\T^1}=B(\delta_1,\gamma)+B(\gamma,\gamma)-2J_0(\gamma)<\min_uH_1^\ell$ and the jump appears indifferently in $z\in[0,1]$ with $b(z,\T^1)=1$.
\item[(b)] Let $\T^2=(\T_n^2)$ be such that $\hat{l}(\T^2)=1$ and $\hat{r}(\T^2),b(z,\T^2)\geq2$ for all $z\in[0,1]$. Then $\min_u \H_1^{\ell,\T^2}=B(\delta_1,\gamma)+B(\gamma,\gamma)+B(\gamma)-\frac32J_0(\gamma)<\min_uH_1^\ell$ and the jump appears in $1$.
\end{itemize}
\end{proposition}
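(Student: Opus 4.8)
The plan is to compute the relevant jump and boundary layer energies for the specific data $u_0^{(1)}=\delta_1$, $u_1^{(1)}=\gamma$ and then read off the minima from the explicit formulas for $H_1^\ell$ in \eqref{def:limitfrac} and for $\H_1^{\ell,\T}$ in \eqref{def:limitfracqc}. For the first claim \eqref{min:H1l}, I would invoke the formula \eqref{minh1l} for $\min_u H_1^\ell$, so that the minimum is
$\min\{B_{BJ}(\delta_1)+B(\gamma,\gamma),\,B_{BJ}(\gamma)+B(\delta_1,\gamma)\}-J_0(\gamma)$.
The key point is that the first alternative is strictly smaller: using $B_b(\delta_1)=\tfrac12J_1(\delta_1)$ (Lemma~\ref{lemma:NNN51}(4)) we get $B_{BJ}(\delta_1)=J_1(\delta_1)+B(\gamma)-2J_0(\gamma)$, i.e.\ the boundary-jump energy at a point where the prescribed opening is exactly $\delta_1$ is ``as cheap as possible'', whereas $B_{BJ}(\gamma)$ involves $B_b(\gamma)$ which by Lemma~\ref{lemma:NNN51}(3) and Lemma~\ref{lemma:B} satisfies $\tfrac12J_1(\delta_1)\le B_b(\gamma)\le\tfrac12J_1(\gamma)$, and combined with $B(\delta_1,\gamma)\le B(\gamma,\gamma)$ this should force the strict inequality; uniqueness of the minimizer with $S_u=\{0\}$ then follows from strictness together with the assumptions of Lemma~\ref{lem:bbjbij} (which give $B(\theta,\gamma)<B_{BJ}(\theta)$, ruling out internal jumps and the no-jump configuration).

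For part (a), since $b(z,\T^1)=1$ for some $z$, the term $-b(z,\T^1)J_0(\gamma)=-J_0(\gamma)$ appears as a candidate internal-jump energy in \eqref{def:limitfracqc}. I would show this beats every boundary contribution: by \eqref{ineq:bbj<bij} we have $B_{BJ}(\theta)\ge B(\theta,\gamma)$, so moving the jump from a boundary point into an interior repatom-pair at distance $1$ saves at least $B_{BJ}(\theta)-B(\theta,\gamma)+\big(-J_0(\gamma)-(-J_0(\gamma))\big)$... more carefully, comparing the configuration with $S_u=\{z\}$, $z\in(0,1)$, whose energy is $B(\delta_1,\gamma)+B(\gamma,\gamma)-J_0(\gamma)-J_0(\gamma)$ by \eqref{def:limitfracqc} with $b(z,\T^1)=1$, against the best boundary configuration $B_{BJ}(\delta_1)+B(\gamma,\gamma)-J_0(\gamma)$: the difference is $B(\delta_1,\gamma)-J_0(\gamma)-B_{BJ}(\delta_1)=B(\delta_1,\gamma)-J_0(\gamma)-J_1(\delta_1)-B(\gamma)+2J_0(\gamma)$, and the extra hypothesis $J_2(\gamma)>2J_2\big(\tfrac{\delta_1+\gamma}{2}\big)$ is precisely what is needed to make $B(\delta_1,\gamma)$ small enough (via the competitor in \eqref{eq:elasticboundary} that uses the opening $\delta_1$ near the boundary) so that this difference is negative; I also need to check the jump at the interface ($\hat r,\hat l$) and in $0$ or $1$ do not do better, which follows from $b=1$ being the strictly smallest among all the $\tilde B_{IFJ}$-type quantities by Lemma~\ref{lemma:bifj}(i). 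Since the energy of an interior jump depends on $z$ only through $b(z,\T^1)$, any $z$ with $b(z,\T^1)=1$ is a minimizer, giving the stated non-uniqueness of the crack location.

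For part (b), the interface term is now $B_{IFJ}(\hat l(\T^2),b(1,\T^2),u_1^{(1)})$ with $\hat l(\T^2)=1$. Using Lemma~\ref{lemma:bifj}(ii), $\tilde B_{IFJ}(1,k)=B(\gamma)-\tfrac32J_0(\gamma)$ for $k\ge2$, so by \eqref{def:bifjxint},
$B_{IFJ}(1,b(1,\T^2),\gamma)=\min\{B(\gamma)-\tfrac32J_0(\gamma)+B(\gamma,\gamma),\,B_{BJ}(\gamma)\}$,
and I would argue the first term wins: $B(\gamma)-\tfrac32 J_0(\gamma)+B(\gamma,\gamma)<B_{BJ}(\gamma)=\tfrac12 J_1(\gamma)+B_b(\gamma)+B(\gamma)-2J_0(\gamma)$ reduces to $B(\gamma,\gamma)+\tfrac12 J_0(\gamma)<\tfrac12 J_1(\gamma)+B_b(\gamma)$, which should follow from $B(\gamma,\gamma)\le\tfrac12 J_1(\gamma)$-type bounds together with $B_b(\gamma)\ge \tfrac12 J_1(\delta_1)$ and $J_0(\gamma)<0$; then the minimal energy is $B(\delta_1,\gamma)+B(\gamma,\gamma)+B(\gamma)-\tfrac32 J_0(\gamma)-J_0(\gamma)$... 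I need to re-track the bookkeeping with the $-J_0(\gamma)$ terms in \eqref{def:limitfracqc}, but the upshot is the claimed value, and it is $<\min_u H_1^\ell$ because a boundary jump at $1$ with the extra interface discount is cheaper than the genuine $B_{BJ}$. Finally, one checks internal jumps ($b(z,\T^2)\ge2$, energy $\ge -2J_0(\gamma)$ above the elastic baseline, i.e.\ $=B_{IJ}$ extra, which is positive and exceeds the interface discount) and a jump at $0$ ($\hat r(\T^2)\ge2$, handled by Lemma~\ref{lemma:bifj}(iii) and Lemma~\ref{theorem:bif}, giving $B_{BJ}(\delta_1)$) are not better, so the crack localizes at $1$.

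The main obstacle I expect is the bookkeeping: each of the energies $B_{BJ}$, $B_{IFJ}$, $B_{IJ}$, $B_{AIF}$, $B_{IF}$ carries its own constant multiples of $J_0(\gamma)$, and assembling the total energy of each candidate configuration from \eqref{def:limitfracqc} without sign errors is delicate. The genuinely nontrivial analytic input is showing $B(\delta_1,\gamma)$ is small — specifically an upper bound of the form $B(\delta_1,\gamma)\le \tfrac12 J_1(\delta_1)+J_2\big(\tfrac{\delta_1+\gamma}{2}\big)+\tfrac12 J_1(\gamma)-J_0(\gamma)+(\text{tail})$ obtained from a short explicit competitor in \eqref{eq:elasticboundary} that relaxes from opening $\delta_1$ to $\gamma$ in one step — so that the hypothesis $J_2(\gamma)>2J_2\big(\tfrac{\delta_1+\gamma}{2}\big)$ can be converted into the strict inequalities $\min_u\H_1^{\ell,\T^i}<\min_u H_1^\ell$.
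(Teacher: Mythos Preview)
Your plan for parts (a) and (b) is essentially the paper's argument: the crucial analytic step is indeed the explicit competitor $v^0=0$, $v^1=\delta_1$, $v^{i+1}-v^i=\gamma$ for $i\ge1$ in \eqref{eq:elasticboundary}, which combined with the hypothesis $J_2(\gamma)>2J_2\bigl(\tfrac{\delta_1+\gamma}{2}\bigr)$ gives
\[
B(\delta_1,\gamma)\le \tfrac12 J_1(\delta_1)+\tfrac12 J_1(\gamma)+J_2\Bigl(\tfrac{\delta_1+\gamma}{2}\Bigr)-J_0(\gamma)+\tfrac12 J_1(\delta_1)
< J_1(\delta_1)-\tfrac12 J_0(\gamma),
\]
and from there the comparisons $B(\delta_1,\gamma)-J_0(\gamma)<B_{BJ}(\delta_1)$ (for (a)) and $B(\delta_1,\gamma)+B(\gamma)-\tfrac32 J_0(\gamma)<B_{BJ}(\delta_1)$ (for (b)) are exactly what the paper checks, via Lemma~\ref{lemma:NNN51}(1),(4). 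Your use of Lemma~\ref{lemma:bifj}(i)--(iii) to sort out the $\tilde B_{IFJ}$ cases is also right.

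There is, however, a genuine gap in your argument for \eqref{min:H1l}. You want the \emph{strict} inequality $B_{BJ}(\delta_1)+B(\gamma,\gamma)<B_{BJ}(\gamma)+B(\delta_1,\gamma)$, but the bounds you list --- $\tfrac12 J_1(\delta_1)\le B_b(\gamma)\le \tfrac12 J_1(\gamma)$ together with an (unproved) $B(\delta_1,\gamma)\le B(\gamma,\gamma)$ --- only yield $B_{BJ}(\gamma)-B_{BJ}(\delta_1)\ge \tfrac12 J_1(\gamma)-\tfrac12 J_1(\delta_1)$ and $B(\gamma,\gamma)-B(\delta_1,\gamma)\le \tfrac12 J_1(\gamma)-\tfrac12 J_1(\delta_1)$, hence at best a non-strict inequality. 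The paper avoids this by invoking two facts from \cite[Theorem~5.1]{NNN}: the \emph{identity} $B_{BJ}(\gamma)=B(\gamma,\gamma)+B_{IJ}$ and the \emph{strict} inequality $B_{BJ}(\delta_1)<B(\delta_1,\gamma)+B_{IJ}$, which immediately give
\[
B_{BJ}(\delta_1)+B(\gamma,\gamma)<B(\delta_1,\gamma)+B_{IJ}+B(\gamma,\gamma)=B(\delta_1,\gamma)+B_{BJ}(\gamma).
\]
You should replace your direct estimate by these cited results; once you have strictness here, uniqueness of the minimizer with $S_u=\{0\}$ follows at once from \eqref{minh1l}, with no need to appeal to the extra hypotheses of Lemma~\ref{lem:bbjbij}.
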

\begin{proof}
Let us first prove the part regarding the energy $H_1^\ell$. It is shown in \cite[Theorem 5.1]{NNN} that $B_{BJ}(\delta_1)<B(\delta_1,\gamma)+B_{IJ}$ and $B_{BJ}(\gamma)=B(\gamma,\gamma)+B_{IJ}$. This implies
\begin{equation}\label{ineq:deltagamma}
 B_{BJ}(\delta_1)+B(\gamma,\gamma)<B(\delta_1,\gamma)+B(\gamma,\gamma)+B_{IJ}=B(\delta_1,\gamma)+B_{BJ}(\gamma),
\end{equation}
which proves (\ref{min:H1l}) and that the unique minimizer $u$ of $H_1^\ell$ satisfies $S_u=\{0\}$. Let us now show the assertions concerning the minimal energies of $\H_1^{\ell,\T}$. We test the minimum problem for $B(\delta_1,\gamma)$, see (\ref{eq:elasticboundary}), with $v:\N\to\R$ such that $v^{i+1}-v^i=\gamma$ for all $i\geq1$. By using $J_2(\gamma)>2J_2\left(\frac{\delta_1+\gamma}{2}\right)$ and $J_0(\gamma)=J_1(\gamma)+J_2(\gamma)$, we obtain
\begin{equation}\label{elasticupbd}
B(\delta_1,\gamma)\leq J_1(\delta_1)+\frac12J_1(\gamma)+J_2\l(\frac{\delta_1+\gamma}{2}\r)-J_0(\gamma)< J_1(\delta_1)-\frac12J_0(\gamma). 
\end{equation}
From (\ref{def:bifjxint}) and Lemma~\ref{lemma:bifj}, we deduce $B_{IFJ}(n,k,\theta)\geq\min\{-J_0(\gamma)+B(\theta,\gamma),B_{BJ}(\theta)\}$.\\
(a) Combining the above considerations with (\ref{def:limitfracqc}) it is enough to show that $B(\delta_1,\gamma)-J_0(\gamma)<B_{BJ}(\delta_1)$. This follows by using (\ref{elasticupbd}), Lemma~\ref{lemma:NNN51} (1), (4) and $J_0(\gamma)<J_1(\delta_1)$:
$$
B(\delta_1,\gamma)-J_0(\gamma)<J_1(\delta_1)-\frac32J_0(\gamma)\leq\frac{1}{2}J_1(\delta_1)+B_b(\delta_1)+B(\gamma)-2J_0(\gamma)=B_{BJ}(\delta_1).
$$
(b) From (\ref{def:limitfracqc}), Theorem~\ref{5:cor} and $\hat r(\T^2),b(z,\T^2)\geq2$ for all $z\in[0,1]$, we deduce $\H_1^{\ell,\T^2}(u)\geq \min H_1^\ell$ for $u\in SBV_c^\ell(0,1)$ with $S_u\cap[0,1)\neq\emptyset$. Let us compute the energy for a jump at $1$: For $k\geq2$, we have by Lemma~\ref{lemma:bifj} (ii) that $\tilde B_{IFJ}(1,k)=B(\gamma)-\frac32J_0(\gamma)$. As in Lemma~\ref{lemma:bifj} (ii), we have, by using $B(\gamma)\geq\frac12J_1(\delta_1)>\frac12J_0(\gamma)$ if $J_2(\gamma)<0$, that $B_{IJ}\geq B(\gamma)-\frac32J_0(\gamma)$. Hence, by applying $B_{BJ}(\gamma)=B(\gamma,\gamma)+B_{IJ}$ and the definition of $B_{IFJ}(n,k,\theta)$, see (\ref{def:bifjxint}), we deduce  
$$B_{IFJ}(1,k,\gamma)=\min\left\{B(\gamma)-\frac{3}{2}J_0(\gamma),B_{IJ}\right\}+B(\gamma,\gamma)=B(\gamma)-\frac{3}{2}J_0(\gamma)+B(\gamma,\gamma).$$
Thus, we deduce from $\hat l(\T^2)=1$ and $b(1,\T^2)=2$ that $B_{IFJ}(\hat l(\T^2),b(1,\T^2),\gamma)=B(\gamma)-\frac32J_0(\gamma)+B(\gamma,\gamma)$. Hence, by the definition of $\H_1^{\ell,\T}$, see \eqref{def:limitfracqc}, and by \eqref{min:H1l} it remains to show that $B(\delta_1,\gamma)+B(\gamma)-\frac32J_0(\gamma)<B_{BJ}(\delta_1)$, which follows by using (\ref{elasticupbd}) and Lemma~\ref{lemma:NNN51} (1), (4)
\begin{align*}
 B(\delta_1,\gamma)+B(\gamma)-\frac32J_0(\gamma)<& J_1(\delta_1)+B(\gamma)-2J_0(\gamma)\\
=&\frac12J_1(\delta_1)+B_b(\delta_1)+B(\gamma)-2J_0(\gamma)=B_{BJ}(\delta_1).
\end{align*}
\end{proof}

We conclude this section by showing that all additional assumptions on $J_1,J_2$ in this chapter are satisfied by the classical Lennard-Jones potentials and Morse potentials, defined in (\ref{def:LJ}) and (\ref{def:morse}) respectively.
\begin{proposition}\label{proplj}
 Let $J_1,J_2$ be as in (\ref{def:LJ}) or (\ref{def:morse}) respectively. Then $J_1$ and $J_2$ satisfy $J_1(\gamma)$, $J_2(\gamma), J_2(\delta_1)<0$, $J_2(\gamma)>2J_2\left(\frac{\delta_1+\gamma}{2}\right)$ and inequality (\ref{ineq:chapter5a}) holds on $\operatorname{dom} J_1$. Furthermore, there exists for all $\theta>0$ a constant $\eta_\theta>0$ such that $J_2\left(\frac{t+\gamma}{2}\right)<0$ for $t\in\operatorname{dom} J_1$ such that $J_1(t)< J_1(\theta)+2\eta_\theta$.
\end{proposition}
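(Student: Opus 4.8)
The plan is to verify each of the listed properties separately for the two families of potentials, reducing everything to explicit one-variable calculus. Throughout I would use the scaling relation $J_2(z)=J_1(2z)$, which immediately gives $\delta_2=\delta_1/2$ and also lets me rewrite quantities like $J_2\bigl(\frac{\delta_1+\gamma}{2}\bigr)=J_1(\delta_1+\gamma)$. The key auxiliary step is to pin down $\delta_1$ and $\gamma$: for Lennard-Jones, $\delta_1=(2k_1/k_2)^{1/6}$ is the minimizer of $J_1$, and $\gamma$ is characterized as the unique minimizer of $J_0(z)=J_2(z)+\frac12\inf\{J_1(z_1)+J_1(z_2):z_1+z_2=2z\}$; since for $z\le\gamma$ the infimum in the definition of $J_0$ is attained at $z_1=z_2=z$ (by [LJ4], $J_0=J_{CB}$ there), one has $\gamma=\operatorname{argmin}J_{CB}=\operatorname{argmin}(J_1(z)+J_1(2z))$, solvable explicitly. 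For the Morse potential the same identifications are elementary. Once $\delta_1<\gamma$ and the values $J_1(\gamma),J_2(\gamma),J_2(\delta_1)$ are available in closed form, the sign conditions $J_1(\gamma),J_2(\gamma),J_2(\delta_1)<0$ follow because all three arguments lie strictly between the respective zero of the potential and $+\infty$ where the potential is negative (recall $J_j(z)<0$ precisely for $z$ between the zero $z_j^0$ and $+\infty$, and $\delta_1,\gamma,\gamma/2\cdot2=\gamma$ all exceed these zeros).

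Next I would treat the inequality $J_2(\gamma)>2J_2\bigl(\frac{\delta_1+\gamma}{2}\bigr)$, i.e.\ $J_1(2\gamma)>2J_1(\delta_1+\gamma)$. Since $\delta_1<\gamma$ we have $\delta_1+\gamma<2\gamma$, so both points lie to the right of $\delta_1$ where $J_1$ is increasing; the claim is that $J_1$ at the larger point exceeds twice its value at the smaller point, which is plausible because $J_1$ is negative and increasing toward $0$ there, so the larger value is closer to $0$ hence algebraically larger, and doubling a negative number makes it more negative. Concretely I would substitute the explicit formulas and reduce to a polynomial (for LJ, after clearing denominators with the substitution $s=1/z^6$) or exponential (for Morse) inequality in the single ratio $\gamma/\delta_1$, and check it numerically/symbolically; this is routine but must be done carefully.

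The inequality (\ref{ineq:chapter5a}), namely $R(t):=J_2\bigl(\frac{\gamma+t}{2}\bigr)+\frac12(J_1(\gamma)+J_1(t))-J_0(\gamma)-\frac32(J_{CB}(t)-J_0(\gamma))\le0$ for all $t\in\operatorname{dom}J_1$, is the main obstacle, since it must hold on the whole half-line (or all of $\mathbb{R}$ for Morse). The strategy is: using $J_0(\gamma)=J_{CB}(\gamma)$ and $J_2(z)=J_1(2z)$, rewrite $R(t)=J_1(\gamma+t)+\frac12 J_1(t)+\frac12 J_1(\gamma)-\frac32 J_1(t)-\frac32 J_1(2t)+J_0(\gamma)$, i.e.\ $R(t)=J_1(\gamma+t)-J_1(t)-\frac32 J_1(2t)+\frac12 J_1(\gamma)+J_0(\gamma)$. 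One then analyzes $R$ by elementary calculus: $R(t)\to \frac12 J_1(\gamma)+J_0(\gamma)<0$ as $t\to+\infty$ (all other terms vanish), $R(t)\to+\infty$ as $t\to 0^+$ for LJ but there the constraint $t\in\operatorname{dom}J_1=(0,\infty)$ still forces checking---actually the $-\frac32 J_1(2t)\to-\infty$ dominates, so $R(t)\to-\infty$ as $t\to0^+$; and near $t=\gamma$ one verifies $R(\gamma)=J_1(2\gamma)-J_1(\gamma)-\frac32 J_1(2\gamma)+\frac12J_1(\gamma)+J_0(\gamma)=-\frac12 J_1(2\gamma)-\frac12 J_1(\gamma)+J_0(\gamma)=-\frac12 J_2(\gamma)-\frac12J_1(\gamma)+J_1(\gamma)+J_2(\gamma)=\frac12(J_1(\gamma)+J_2(\gamma))<0$. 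To conclude $R\le0$ everywhere I would compute $R'$, show it has at most one interior zero on $(0,\infty)$ (using convexity/monotonicity structure of $J_1$ from [LJ4] and the explicit forms), so $R$ is unimodal, and since both ends are negative and the single critical value can be bounded---or simply evaluate the explicit rational/transcendental expression and confirm the maximum is negative---this finishes. For the Morse potential the substitution $x=e^{-k_2(t-\delta_1)}$ turns $R$ into a cubic in $x$, whose sign on $x>0$ is checked directly.

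Finally, for the last assertion (existence for every $\theta>0$ of $\eta_\theta>0$ with $J_2\bigl(\frac{t+\gamma}{2}\bigr)<0$ whenever $J_1(t)<J_1(\theta)+2\eta_\theta$), the point is a continuity/openness argument: the set $\{t:J_1(t)<J_1(\theta)+\epsilon\}$ shrinks to $\{t:J_1(t)\le J_1(\theta)\}$ as $\epsilon\downarrow0$, which is a bounded interval $[t_-(\theta),t_+(\theta)]$ (bounded because $J_1(t)\to0$ as $t\to+\infty$ and $J_1(t)\to+\infty$ as $t\downarrow$ the left endpoint of its domain, hence the sublevel set is compact and contained in the region where $J_1<0$ is... well, one must check $J_1(\theta)<0$ is not needed; if $J_1(\theta)\ge0$ the sublevel set may be a half-line). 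In the half-line case I note that $J_1(t)<J_1(\theta)+2\eta_\theta$ still forces $t$ bounded below away from the domain's left endpoint and, for $t$ large, $J_2\bigl(\frac{t+\gamma}{2}\bigr)=J_1(t+\gamma)\to0^-$ from below since $t+\gamma$ lies to the right of the zero of $J_1$; so for all sufficiently large $t$ the quantity $J_2(\frac{t+\gamma}{2})$ is negative, and on the remaining compact $t$-interval one checks $\frac{t+\gamma}{2}$ stays to the right of the zero $z_2^0$ of $J_2$ (equivalently $t+\gamma>2z_2^0=z_1^0$, the zero of $J_1$), which holds because the relevant $t$ exceed some explicit bound; choosing $\eta_\theta$ small enough so that the sublevel set is contained in this good region completes the proof. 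I would spell this out with the explicit zeros for LJ ($z_1^0=(k_1/k_2)^{1/6}$) and Morse, making the choice of $\eta_\theta$ effective.
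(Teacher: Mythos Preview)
There are two genuine errors that break your plan.

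First, you have the ordering of $\delta_1$ and $\gamma$ backwards: for both families one has $\gamma<\delta_1$, not $\delta_1<\gamma$. For Lennard--Jones, $\gamma=\bigl(\frac{1+2^{-12}}{1+2^{-6}}\bigr)^{1/6}\delta_1<\delta_1$; for Morse, $J_{CB}'(\delta_1)=2J_1'(2\delta_1)>0$ forces $\gamma<\delta_1$. Hence $\delta_1+\gamma>2\gamma$, and your monotonicity heuristic for $J_1(2\gamma)>2J_1(\delta_1+\gamma)$ points in the wrong direction. The paper instead uses $\gamma<\delta_1$ to reduce the claim to the stronger inequality $J_2(\gamma)>2J_2(\delta_1)$ (since $\frac{\delta_1+\gamma}{2}<\delta_1$ and $J_2$ is increasing there), and then checks that explicitly.

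Second, and more seriously for the inequality $R\le 0$: your simplification contains an arithmetic slip (the constant term is $\frac12 J_0(\gamma)$, not $J_0(\gamma)$), and once corrected you get $R(\gamma)=0$, not $R(\gamma)=\frac12 J_0(\gamma)<0$. In fact $R(\gamma)=R'(\gamma)=0$, so the inequality is \emph{sharp} at $t=\gamma$ and you cannot conclude by showing the maximum of $R$ is strictly negative. The correct argument must show that $\gamma$ is the \emph{global} maximizer of $R$. For Lennard--Jones the paper does this in two pieces: on $(0,z_c]$ (with $z_c$ the inflection point of $J_1+2J_2$) one shows $R'\ge0$ on $(0,\gamma]$ and $R'\le0$ on $[\gamma,z_c]$ by writing $R'(t)=\frac12\int_t^{(t+\gamma)/2}J_2''+\int_t^\gamma J_{CB}''$ and using convexity of $J_2$ and $J_{CB}$ there; on $[z_c,\infty)$ one bounds $R$ directly. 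For Morse the paper shows $R'$ has a unique zero by reducing $R'(t)=0$ to a cubic in $q=e^{k_2 t}$ (note: $R$ itself is degree four in $q$, not a cubic, so your substitution remark needs adjusting), and then invokes the behaviour at $\pm\infty$.

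A smaller point: for the last assertion you work much harder than necessary. For LJ one simply has $J_2\bigl(\frac{t+\gamma}{2}\bigr)=J_1(t+\gamma)<0$ for every $t>0$ because $\gamma$ already exceeds the zero $z_0$ of $J_1$; any $\eta_\theta>0$ works. For Morse an explicit $\eta_\theta=\frac12(J_1(0)-J_1(\theta))$ suffices, since $J_1(t)<J_1(0)$ forces $t>0$.
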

\begin{proof}
 Let $J_1,J_2$ satisfy (\ref{def:LJ}), i.e., there exist $k_1,k_2>0$ such that $J_1(z)=\frac{k_1}{z^{12}}-\frac{k_2}{z^6}$ and $J_2(z)=J_1(2z)$. Straightforward calculations lead to 
\begin{equation}\label{eq:lj}
 \delta_1=\l(\frac{2k_1}{k_2}\r)^{1/6},\quad\gamma=\l(\frac{1+2^{-12}}{1+2^{-6}}\r)^{1/6}\delta_1,\quad z_0=\left(\frac{k_1}{k_2}\right)^{1/6}=\left(\frac{1}{2}\right)^{1/6}\delta_1,
\end{equation}
where $\delta_1$ is the unique minimizer of $J_1$, $\gamma$ the unique minimizer of $J_0$ (and $J_{CB}$) and $z_0$ is the unique zero of $J_1$ with $J_1<0$ on $(z_0,+\infty)$. Note that $z_0<\gamma<\delta_1$. Moreover, we have that $J_1$ is strictly decreasing on $(0,\delta_1)$ and strictly increasing on $(\delta_1,+\infty)$. 
From $\gamma>z_0$, we deduce $J_1(\gamma)<0$ and thus $J_2\left(\frac{\gamma+t}{2}\right)=J_1(\gamma+t)<0$ on $\{t:t>0\}=\operatorname{dom}J_1$. Since $\gamma<2\gamma<2\delta_1$, we have $J_2(\gamma),J_2(\delta_1)<0$. Moreover, by $\delta_1/2<\gamma<\delta_1$ and the definition of $J_2$, it is sufficient to show $J_2(\gamma)>2J_2(\delta_1)$ to obtain $J_2(\gamma)>2J_2\left(\frac{\delta_1+\gamma}{2}\right)$:
\begin{align*}
 J_2(\gamma)-2J_2(\delta_1)=&\frac{k_1}{2^{12}\delta_1^{12}}\left(\frac{(1+2^{-6})^2}{(1+2^{-12})^2}-2\right)-\frac{k_2}{2^6\delta_1^6}\left(\frac{1+2^{-6}}{1+2^{-12}}-2\right)\\
=&\frac{k_2^2}{4k_12^{12}}\left(\frac{(1+2^{-6})^2}{(1+2^{-12})^2}-2-2^7\left(\frac{1+2^{-6}}{1+2^{-12}}-2\right)\right)>0.
\end{align*}
Let us now show inequality (\ref{ineq:chapter5a}). Since $J_0(\gamma)=J_{CB}(\gamma)=J_1(\gamma)+J_2(\gamma)$ and $J_0'(\gamma)=J_{CB}'(\gamma)=0$ one directly has $R(\gamma)=0$ and $R'(\gamma)=0$. Consider the function $J_1+2J_2$ given by
\[J_1(z)+2J_2(z)=\frac{k_1}{z^{12}}-\frac{k_2}{z^6}+\frac{k_1}{2^{11}z^{12}}-\frac{k_2}{2^5z^6}=\frac{k_1(1+2^{-11})}{z^{12}}-\frac{k_2(1+2^{-5})}{z^6}.\]
This is again a Lennard-Jones potential and there exists a constant $z_c>0$ such that $J_1''(z)+2J_2''(z)>0$ for all $z\in(0,z_c)$. To compute $z_c$ we set the second derivative of $J_1+2J_2$ equal to zero:
\[0=\frac{156k_1(1+2^{-11})}{z_c^{14}}-\frac{42k_2(1+2^{-5})}{z_c^8},\quad z_c>0\quad\Leftrightarrow\quad z_c=\delta_1\l(\frac{13}{7}\frac{1+2^{-11}}{1+2^{-5}}\r)^{1/6}.\]
From an analogous calculation we obtain that $J_{CB}''(z)>0$ for $z\in(0,z_*)$ with \\ $z_*=\delta_1\l(\frac{13}{7}\frac{1+2^{-12}}{1+2^{-6}}\r)^{1/6}>z_c$.
Now we estimate $R$ on $[z_c,+\infty)$. Since $z_c>\delta_1>\gamma$, we have $\frac12J_1-\frac32J_{CB}=-\frac12J_2-J_{CB}$ is decreasing on $(z_c,+\infty)$. Since $J_2\l(\frac{t+\gamma}{2}\r)=J_1(t+\gamma)<0$ for $t\geq0$, we have 
$$R(t)\leq-\frac12J_2(z_c)-J_{CB}(z_c)+\frac{1}{2}(J_1(\gamma)+J_0(\gamma))\approx-0.0469\frac{k_2^2}{k_1}<0, $$
for $t\geq z_c$. We now show that $R'(t)\geq0$ for $t\leq\gamma$ and $R'(t)\leq0$ for $\gamma\leq t\leq z_c$, which proves the statement. For $0<t\leq\gamma<z_c<z_*$, we have
\begin{align*}
 R'(t)=&\frac12J_2'\l(\frac{t+\gamma}{2}\r)+\frac12J_1'(t)-\frac32J_{CB}'(t)=\frac12\left(J_2'\l(\frac{t+\gamma}{2}\r)-J_2'(t)\right)-J_{CB}'(t)\\
=&\frac12\int_{t}^{\frac{t+\gamma}{2}}J_2''(z)dz+\int_t^\gamma J_{CB}''(z)dz\geq\frac12\int_{t}^{\frac{t+\gamma}{2}}J_2''(z)+J_{CB}''(z)dz>0.
\end{align*}
Analogously we get for $\gamma\leq t\leq z_c$
$$R'(t) = -\frac12\int_{\frac{t+\gamma}{2}}^{t}J_2''(z)dz-\int_\gamma^t J_{CB}''(z)dz\leq-\frac12\int_{\frac{t+\gamma}{2}}^{t}J_2''(z)+J_{CB}''(z)dz<0. $$
Hence, Lennard-Jones potentials satisfy all the properties asserted.

Let now $J_1$ and $J_2$ be Morse potentials as in (\ref{def:morse}), i.e., there exist $k_1,k_2,\delta_1>0$ such that $J_1(z)=k_1\left(1-e^{-k_2(z-\delta_1)}\right)^2-k_1$ and $J_2(z)=J_1(2z)$. In this case, we do not have such an explicit expression for $\gamma$ as in the Lennard-Jones case and therefore derive bounds on $\gamma$. Since $J_1'(z)<0$ iff $z<\delta_1$ and $J_1'(z)>0$ iff $z>\delta_1$, we deduce from $0=J_{CB}'(\gamma)=J_1'(\gamma)+2J_1'(2\gamma)$ that $\delta_1/2<\gamma<\delta_1$. A straightforward calculation yields $J_1(z)< 0$ iff $z>\frac{k_2\delta_1-\ln(2)}{k_2}=:z_0$. In order to prove $J_1(\gamma)<0$, we show $J'_{CB}(z_0)<0$, which implies $z_0<\gamma$. Indeed, we have
$$J_{CB}'(z_0)=-4k_1k_2 \left(16e^{-2k_2\delta_1} - 4 e^{-k_2 \delta_1} + 1\right)<0.$$
As in the Lennard-Jones case, we deduce from $J_1(\gamma)<0$, $\gamma<\delta_1$ and the definition of $J_2$ that $J_2(\gamma),J_2(\delta_1)<0$ and $J_2\left(\frac{\gamma+t}{2}\right)<0$ for all $t>0$. Define for $\theta>0$ the constant $\eta_\theta:=\frac{1}{2}(J_1(0)-J_1(\theta))>0$, then we deduce $J_2\left(\frac{t+\gamma}{2}\right)<0$ for $t\in\{t:J_1(t)<J_1(\theta)+2\eta_\theta\}\subset\{t:t>0\}$.\\ 
Let us show $J_2(\gamma)-2J_2\left(\frac{\delta_1+\gamma}{2}\right)=J_1(2\gamma)-2J_1(\delta_1+\gamma)>0$. From $\{\gamma\}=\operatorname{argmin}J_{CB}$, we deduce 
\begin{equation*}
\begin{split}
 0=J_{CB}'(\gamma)=&-k_1k_2\left(-2e^{k_2\delta_1}(e^{-k_2\gamma}+2e^{-2k_2\gamma})+e^{2k_2\delta_1}(2e^{-2k_2\gamma}+4e^{-4k_2\gamma})\right)\\
=&2k_1k_2e^{k_2\delta_1}e^{-4k_2\gamma}\left(e^{3k_2\gamma}+2e^{2k_2\gamma}-e^{k_2\delta_1}(2+e^{2k_2\gamma})\right)\\
=&2k_1k_2q_{\delta_1}q_\gamma^{-4}\left(q_\gamma^3+2q_\gamma^2-q_{\delta_1}(2+q_\gamma^2)\right)
\end{split}
\end{equation*}
with $q_\gamma:=e^{k_2\gamma}>1$ and $q_{\delta_1}:=e^{k_2\delta_1}>1$. This yields $q_{\delta_1}=\frac{q_\gamma^3+2q_\gamma^2}{2+q_\gamma^2}$ and allows us to show
\begin{align}\label{morsegamma2}
J_2(\gamma)-2J_2\left(\frac{\delta_1+\gamma}{2}\right)=&k_1\left(-2e^{-k_2(2\gamma-\delta_1)}+e^{-2k_2(2\gamma-\delta_1)}+4e^{-k_2\gamma}-2e^{-2k_2\gamma}\right)\notag\\
=&k_1e^{-4k_2\gamma}\left(-2e^{k_2\delta_1}e^{2k_2\gamma}+e^{2k_2\delta_1}+4e^{3k_2\gamma}-2e^{2k_2\gamma}\right)\notag\\
=&k_1q_\gamma^{-4}\left(4q_\gamma^3-2(1+q_{\delta_1})q_\gamma^2+q_{\delta_1}^2\right)\notag\\
=&\frac{k_1}{q_\gamma^{2}(q_\gamma^2+2)^2}\left(2q_\gamma^5-5q_\gamma^4+16 q_\gamma^3-12q_\gamma^2+16 q_\gamma-8\right)
\end{align}
The assertion follows since (\ref{morsegamma2}) is positive for $q_\gamma>1$.\\
It is left to show that $R=R(t)\leq0$ for all $t\in\R$. We prove the inequality in a different way than in the Lennard-Jones case. We have $\lim_{t\to+\infty}R(t)=\frac12J_1(\gamma)+\frac12J_0(\gamma)<0$ and by using $J_1(t+\gamma)<J_1(2t)$ for $t<0$ we obtain that
$$\lim_{t\to-\infty}R(t)\leq\lim_{t\to-\infty}\left(-J_1(t)-\frac12J_2(t)+\frac12J_1(\gamma)+\frac12J_0(\gamma)\right)=-\infty.$$
Moreover, by the definition of $R=R(t)$ and $\gamma$, we have that $R(\gamma)=R'(\gamma)=0$. To show that $R(t)\leq0$ it is sufficient to show that $R$ has no critical point except $\gamma$. Indeed, if $R(t)>0$ for some $t\in\R$, then in order to satisfy the conditions at infinity there has to exist a maximum point $\hat t$ with $R(\hat t)>0$ and $R'(\hat t)=0$. By the definition of $J_1$, $J_2$ and $R=R(t)$, we have
\begin{align*}
R'(t)=&J_1'(t+\gamma)-J_1'(t)-3J_1'(2t)\\ 
=&2k_1k_2e^{k_2\delta_1}\bigg(e^{-k_2(t+\gamma)}(1-e^{-k_2(t+\gamma-\delta_1)})-e^{-k_2t}(1-e^{-k_2(t-\delta_1)})\\
&~-3e^{-2k_2t}(1-e^{-k_2(2t-\delta_1)})\bigg)\\
=&2k_1k_2e^{k_2\delta_1}e^{-4k_2t}\left((e^{-k_2\gamma}-1)e^{3k_2t}+(e^{k_2\delta_1}(1-e^{-2k_2\gamma})-3)e^{2k_2t}+3e^{k_2\delta_1}\right)\\
=&2k_1k_2e^{k_2\delta_1}q_t^{-4}\left((e^{-k_2\gamma}-1)q_t^3+(e^{k_2\delta_1}(1-e^{-2k_2\gamma})-3)q_t^2+3e^{k_2\delta_1}\right)\\
=&2k_1k_2e^{k_2\delta_1}q_t^{-4}f(q_t)
\end{align*}
with $q_t=e^{k_2t}$. From $R'(\gamma)=0$ it follows $f(q_\gamma)=0$. Let us show that $q_\gamma$ is the unique zero of $f$. We have $f(0)=3e^{k_2\delta_1}>0$ and from $k_2,\gamma>0$, we deduce $e^{-k_2\gamma}-1<0$ and thus $\lim_{q\to\infty}f(q)=-\infty$. This implies that if $f$ had a second zero, it would have a local minimum and a local maximum. But
$$f'(q)=q\left(3(e^{-k_2\gamma}-1)q+2(e^{k_2\delta_1}(1-e^{-2k_2\gamma})-3)\right)$$
and thus $f$ has at most one local extremum in $(0,+\infty)$. Hence, $q_\gamma$ is the unique zero of $f$ and $\gamma$ the unique zero of $R'(t)$. 
\end{proof}


\end{document}